\documentclass{amsart}

\usepackage{a4wide}
\usepackage{enumitem}

\usepackage{amsmath,amscd}
\usepackage{amssymb}
\usepackage{amsthm}
\usepackage{comment}
\usepackage{graphicx, xcolor}
\usepackage{mathrsfs}
\usepackage{color}
\usepackage{mathrsfs}
\usepackage{multirow}
\usepackage[pagewise]{lineno}
\usepackage{subcaption}
\usepackage{cleveref}
\usepackage[ruled]{algorithm2e}
\usepackage{booktabs}
\usepackage{blkarray}
\usepackage{url} 
\numberwithin{equation}{section}

\allowdisplaybreaks

\newtheorem{theorem}{Theorem}[section]
\newtheorem{lemma}[theorem]{Lemma}
\newtheorem{definition}[theorem]{Definition}

\newtheorem{proposition}[theorem]{Proposition}

\newtheorem{remark}[theorem]{Remark}

\def\ns{\noalign{\medskip}}
\def\ds{\displaystyle}
\def\q{\quad}
\def\qq{\qquad}
\def\ms{\medskip}

\title[Exact controllability of the stochastic Maxwell equation]{Exact controllability of the stochastic Maxwell equation$:$ theory and numerical simulation}

\author[L. Sun]{Liying Sun}
\address{Academy for Multidisciplinary Studies, Capital Normal University, Beijing 100048, China.}
\email{liyingsun@lsec.cc.ac.cn}
\thanks{This work is supported by the National Natural Science Foundation of China under Grants 12471386 and 12401579.}

\author[X. Wang]{Xiaohan Wang}
\address{Academy for Multidisciplinary Studies, Capital Normal University, Beijing 100048, China.}
\email{\url{Wangxh_2025@outlook.com}}

\author[Y. Yu]{Yongyi Yu}
\address{Corresponding  author.  School of Mathematics, Jilin University, Changchun 130012, China.}
\email{yuyy122@jlu.edu.cn}
\begin{document}
\maketitle

\begin{abstract}
%This paper is devoted to studying exact controllability of three-dimensional stochastic Maxwell equations which is a coupled system consisting of two stochastic partial differential equation. First, the observability inequality for backward stochastic Maxwell equations is established via the multiplier method. Combined with duality theory, the exact controllability result of the forward equations is further proved. It is also shown that the control imposed on the diffusion term cannot be weakened. 
%{\color{blue}To verify these theoretical advancements, a numerical algorithm based on the Lagrange multiplier method is proposed, yielding corresponding results that offer practical validation and deeper insights into the underlying theoretical framework.}

%{\color{blue}This article  investigates the exact controllability of three-dimensional stochastic Maxwell equations, a coupled system comprising two stochastic partial differential equations. The research establishes the observability inequality for the backward stochastic Maxwell equations using the multiplier method, and subsequently, by leveraging duality theory, proves the exact controllability of the forward equations. A crucial finding reveals that the control applied to the diffusion term cannot be weakened, highlighting a fundamental constraint. Finally, a numerical algorithm based on the Lagrange multiplier method is proposed, yielding numerical results that offer the control value and lead to deeper insights into the underlying theoretical framework.}

This article investigates the exact controllability of three-dimensional stochastic Maxwell equations, a coupled system comprising two stochastic partial differential equations. The research establishes the observability inequality for the backward stochastic Maxwell equations using the multiplier method, and subsequently, proves the exact controllability of the forward equations. The control acting on the diffusion term is found to be indispensable, since exact controllability is destroyed when this control is removed; it is further proved that the controllability result obtained in this paper is achieved with a minimal number of controls. Finally, a numerical algorithm combining a central difference for spatial discretization, a midpoint scheme for temporal discretization and Lagrange multiplier method is proposed, yielding numerical results that offer the control value and lead to deeper insights into the underlying theoretical framework.

\end{abstract}
 \medskip
		
		\noindent{\bf Keywords.} stochastic Maxwell equations, exact controllability, multiplier method, numerical simulation.

 \medskip
 
		\noindent{\bf MSC codes.} 93B05, 93B07

\section{Introduction}

Maxwell %'s
equations constitute a fundamental mathematical model for describing the evolution of electromagnetic fields. In homogeneous and isotropic materials, the classical Maxwell equations
are usually formulated in a deterministic setting, where the material parameters and source terms are assumed to be known functions. Such deterministic models have been widely used to study wave propagation, electromagnetic radiation, and related problems.
The exact controllability of deterministic Maxwell equations has been studied from several perspectives. Classical results include boundary controllability obtained by the Hilbert uniqueness method \cite{la} and %by 
stabilizability arguments \cite{ka}, as well as internal controllability established through multiplier techniques \cite{xu1} and microlocal analysis \cite{Ph}. Further extensions have addressed heterogeneous media and nonsmooth domains \cite{Ni}, topological obstructions \cite{W}, and second order Maxwell systems \cite{dgl}; see also \cite{gh} and the references therein.

However, in many practical electromagnetic phenomena, random effects cannot be neglected. These effects may arise from microscopic fluctuations of the medium, random material properties, thermal noise, or stochastic sources. To account for such uncertainties, deterministic functions in Maxwell %'s 
equations are naturally replaced by stochastic processes, leading to stochastic Maxwell equations. 
This formulation offers a robust framework for modeling electromagnetic fields in random environments and plays a crucial role in fluctuational electrodynamics, statistical radiophysics, and stochastic electromagnetism.
%Compared with the deterministic case, stochastic Maxwell equations present additional mathematical difficulties due to the interaction between the hyperbolic structure of the Maxwell %'s system 
%{\color{blue} equation} and the randomness introduced by stochastic perturbations. 
%In particular, the corresponding controllability problem becomes more delicate, since one has to control not only the drift dynamics but also the stochastic effects. 
%In contrast, the corresponding controllability problem for stochastic Maxwell equations remains much less developed.
In  contrast to their well-established deterministic counterparts, the controllability problem for stochastic Maxwell equations remains significantly less developed.
Motivated by these considerations, this paper investigates the exact controllability and numerical simulation %computation 
of controls for stochastic Maxwell equations in an isotropic conductive medium.
As a foundational step for our subsequent analysis of stochastic Maxwell equations, we first introduce the requisite notation.

Let $T>0$, and $G$ be nonempty open subset of $\mathbb{R}^3$ with a $C^1$ boundary $\Gamma$.
Set $Q=G\times (0,T)$.
%Denote by $\chi_{G_0}$ the characteristic function of the set $G_0$.
Here and henceforth, $x=(x_1,x_2,x_3)\in \mathbb{R}^3$, $\ell_{x_k}=\frac{\partial \ell}{\partial x_k}$, for $k=1,2,3$,  $\chi_t=\frac{\partial\chi}{\partial t}$, $\nabla=(\frac{\partial}{\partial x_1}, \frac{\partial}{\partial x_2}, \frac{\partial}{\partial x_3})^T$, $\nabla\times=\mbox{curl}$, $\nabla\cdot=\mbox{div}$, and $\nu(\cdot)=(\nu^1(\cdot),\nu^2(\cdot),\nu^3(\cdot))$
denotes the unit outer normal vector to $\Gamma$ at point $x$. 
Let $\left(\Omega, \mathcal{F}, \{\mathcal{F}_t\}_{t\geq 0},
\mathbb{P}\right)$ be a complete filtered probability space, on
which a one-dimensional standard Brownian motion $\{W(t)\}_{t\geq
0}$ is defined, so that $\mathbb F=\{\mathcal{F}_t\}_{t\geq 0}$ is the
natural filtration generated by $W(\cdot)$, augmented by all $\mathbb{P}$-null sets in $\mathcal{F}$.
Let $\mathcal{H}$ be a Banach space, and
$C([0,T];\mathcal{H})$ be the Banach space of all
$\mathcal{H}$-valued strongly continuous abstract functions defined on $[0,T]$. 
Denote by $L^2_{\mathcal{F}_t}(\Omega;\mathcal{H})$ the space of all $\mathcal{F}_t$-measurable random variables $\xi$ such that $\mathbb{E}|\xi|^2_{\mathcal{H}}<\infty$; 
by $L^2_{\mathbb{F}}(0,T;\mathcal{H})$ the space consisting of all $\mathcal{H}$-valued $\mathbb F$-adapted processes $X(\cdot)$ such that
$\mathbb{E}(|X(\cdot)|^2_{L^2(0,T;\mathcal{H})})<\infty$; 
by $L^\infty_{\mathbb{F}}(0,T;\mathcal{H})$ the space consisting of all $\mathcal{H}$-valued $\mathbb F$-adapted essentially bounded processes;
by $C_{\mathbb{F}}([0,T];L^2(\Omega;\mathcal{H}))$ the space consisting of all $\mathcal{H}$-valued $\mathbb F$-adapted process $X(\cdot)$ such that $X(\cdot):[0,T]\to L^2_{\mathcal{F}}(\Omega;\mathcal{H})$ is continuous; 
by $L^2_{\mathbb{F}}(\Omega; C([0,T];\mathcal{H}))$ the space consisting of all $\mathcal{H}$-valued $\mathbb F$-adapted continuous processes
$X(\cdot)$ such that $\mathbb{E}(|X(\cdot)|^2_{C([0,T];\mathcal{H})})<\infty$.
All these spaces are Banach spaces with the canonical norms (see \cite[Section 2.6]{LZ1}).
Based on the above notations, we consider
%Consider 
the following three-dimensional stochastic Maxwell equation in an isotropic conductive medium:
\begin{eqnarray}\label{1a}
\left\{
\begin{array}{ll}
\mbox{d}{\bf E}-\mbox{curl}~{\bf H}\mbox{d}t
=f\mbox{d}W(t) &\mbox{ in }Q,\\\ns\ds
\mbox{d}{\bf H}+\mbox{curl}~{\bf E}\mbox{d}t
=g\mbox{d}W(t) &\mbox{ in }Q,\\\ns\ds
\mbox{div} {\bf E}=\mbox{div} {\bf H}=0 &\mbox{ in }Q,\\ \ns\ds
{\bf E}\times \nu=u &\mbox{ on } \Gamma\times(0,T),\\ \ns\ds
{\bf E}(x,0)={\bf E}_0(x),\q  {\bf H}(x,0)={\bf H}_0(x) &\mbox{ in }G,
\end{array}
\right.
\end{eqnarray}
where ${\bf E}=(E_1, E_2, E_3)$ and  ${\bf H}=(H_1, H_2, H_3)$ represent the electric and magnetic field, respectively, $f, g, u$ are the controls, and $({\bf E}_0(x), {\bf H}_0(x))$ are the initial data. 

The main purpose of this
article is to study the exact controllability of system $(\ref{1a})$ and the numerical computation of its controls. 
More precisely, for any given initial data $({\bf E}_0(x), {\bf H}_0(x))$ and terminal data $({\bf E}_T(x), {\bf H}_T(x))$, one can find a triple of control $(f, g, u)$ such that the corresponding  solution of $(\ref{1a})$ satisfies $({\bf E}(x,T),{\bf H}(x,T))=({\bf E}_T(x), {\bf H}_T(x))$ in $G$ for some time $T>0$.

This means that we assume the time evolution of the electric field $\bf{ E}$ and magnetic field $\bf{ H}$ is driven by an externally applied current density $u$ flowing tangentially on $\Gamma$, as well as by the active manipulation of random disturbances acting on the electric field and magnetic field, represented by $f$ and $g$, respectively. 

Compared with the deterministic case, 
the controllability problem for the stochastic Maxwell equations becomes more delicate, since one has to control not only the drift dynamics but also the stochastic effects. 
On the one hand, stochastic Maxwell equations present additional mathematical difficulties due to the interaction between the hyperbolic structure of the Maxwell %'s system 
equation and the randomness introduced by stochastic perturbations. On the other hand, 
the stochastic counterpart involves several new difficulties. For instance, solutions of stochastic partial differential equations are generally non-differentiable with respect to the noise variable, and the standard compactness embedding arguments used for deterministic systems are no longer directly applicable. Moreover, the classical strategy of reducing deterministic Maxwell equations to wave equations is no longer viable in the stochastic case, owing to the presence of multiple stochastic disturbance terms in the system. 
Furthermore, unlike existing numerical approaches for the controllability of PDEs (see \cite{dgl,ez,gll,hm,mt} and references therein), the problem considered here requires the simultaneous satisfaction of divergence-free conditions for both the electric and magnetic fields. This inherent complexity is further amplified in a stochastic setting, where the numerical degrees of freedom and constraints substantially increase compared with the deterministic case, leading to a heavier computational cost.

The main contributions and findings of this paper are summarized as follows.
\begin{itemize}
\item We establish the required observability inequality for the backward stochastic Maxwell equations by means of the multiplier method, together with energy estimates and a cut-off function argument. Then, based on the duality theory, we prove the exact controllability for the forward stochastic Maxwell equations.

\item We show that, when only one equation is subject to stochastic perturbation, the stochastic Maxwell system fails to be exactly controllable for any $T>0$, even if the controls are allowed to act on the whole domain $G\times(0,T)$ and on the boundary $\Gamma\times(0,T)$. We further establish the exact controllability of the stochastic Maxwell equations with a minimal number of controls. More precisely, we prove that the system is no longer exactly controllable if any one of these controls is removed.

\item We discretize the stochastic Maxwell equations in space based on the central difference and in time based on the midpoint scheme. Leveraging the initial and terminal conditions, the control problem is then transformed into a system of linear equations for the unknown control values at discrete time nodes. The resulting linear system is subsequently solved via the Lagrange multiplier method to simulate the control triple
$(u,f,g)$ that achieves the terminal target, and the numerical results are then presented to confirm  theoretical findings.
We would like to mention that the approach of constructing the numerical algorithm for simulating control functions can be applied to other stochastic PDEs, such as stochastic wave equation, stochastic Schr\"odinger equation, stochastic heat equation, etc. 
\end{itemize}
%$\bullet$ 
%$\bullet$ 
%$\bullet$ We discretize the stochastic Maxwell equations in space using a Yee grid, and in time using a midpoint scheme. Using the initial and terminal conditions, the control problem to be solved is transformed into a system of linear equations for the control values at different time nodes. This linear system is solved using the Lagrange multiplier method, yielding the control \((u,f,g)\) that achieves the terminal target, and the numerical results are presented.

The rest of this paper is organized as follows. Section 2 presents the main results  in this paper. In Section 3, we give a pointwise identity for the stochastic Maxwell operator. Section 4 is devoted to proving the  observability estimate for the backward stochastic Maxwell equation, while Section 5 gives the detailed proof  of the  controllability result. Finally, we propose a numerical algorithm to simulate the controls of  stochastic Maxwell equations in Section 6. % Finally, we summarize the paper and discuss future  topics that are worth investigating in section 7.
\section{Problem formulation}

In this section, we give some preliminary and the main results.
First, we introduce some functional spaces which are commonly used in the analysis of stochastic Maxwell equations as follows
\begin{eqnarray*}
\begin{array}{ll}
&J_1=\left\{y\in(L^2(G))^3\Big|~\mbox{div}~y=0~\mbox{in}~G\right\},\\\ns
&J_2=\left\{y\in J_1\Big|~y\cdot \nu=0~\mbox{on}~\Gamma\right\},\\\ns
&J_3=\left\{y\in C_0^\infty(G)\Big|~y\times\nu=0~\mbox{on}~ \Gamma\right\},
\end{array}
\end{eqnarray*}
and
\begin{eqnarray*}
\begin{array}{ll}
&J^1_1=\left\{y\in J_1\Big|~{\rm curl }~y\in(L^2(G))^3,  y\times\nu=0~\mbox{on}~\Gamma\right\},\\\ns
&J^1_2=\left\{y\in J_1\Big|~{\rm curl }~y\in(L^2(G))^3,  y\cdot\nu=0~\mbox{on}~\Gamma\right\}.
\end{array}
\end{eqnarray*}
Denote by $\hat{J}_1$ the dual space of $J_1^1$ with respect to the pivot space $J_1$, and $\hat{J}_2$ the dual space of $J_2^1$ with respect to the pivot space $J_2$. 
Without loss of generality, we assume that $0\in G$ and  constants $M, M_1>0$ satisfy%, and then
\begin{eqnarray}\label{3a1}
|x|\leq M, \q \forall\ x\in G \q  \mbox{and} \q    0\leq x\cdot\nu(x)\leq M_1,\q \forall\ x\in \Gamma.
\end{eqnarray}
Let
\begin{eqnarray}\label{3ab1}
T>T^*\triangleq 18(2M^2+1).
\end{eqnarray}

%First, we state the definition of the solution and the well-posedness result for stochastic Maxwell equations.
The solution of the control system $(\ref{1a}),$ which is a stochastic PDE with a nonhomogeneous boundary  condition,
%The control system $(\ref{1a})$ is a stochastic PDE with a nonhomogeneous boundary  condition. Its solution 
is understood in the sense of transposition (see [15, section 10.2]  for example). For the readers' convenience, 
below we shall present
%let us give 
the definition of the  solution to $(\ref{1a})$. To this end, we consider the following backward stochastic Maxwell  equation
\begin{eqnarray}\label{20a}
\left\{
\begin{array}{ll}
\mbox{d}y-\mbox{curl}~z\mbox{d}t
=Y\mbox{d}W(t) &\mbox{ in }Q,\\\ns\ds
\mbox{d}z+\mbox{curl}~y\mbox{d}t=Z\mbox{d}W(t) &\mbox{ in }Q,\\\ns\ds
\mbox{div} y=\mbox{div} z=0 &\mbox{ in }Q,\\\ns\ds
y\cdot\nu=0 &\mbox{ on }\Gamma\times(0,T),\\\ns\ds
z\times\nu=0 &\mbox{ on }\Gamma\times(0,T),\\\ns\ds
y(x, \tau)=y_\tau(x),\q z(x, \tau)=z_\tau(x) &\mbox{ in }G,
\end{array}
\right.
\end{eqnarray}
where $\tau\in(0, T]$, $(y_\tau,z_\tau)\in L^2_{\mathcal{F}_\tau}(\Omega;\hat{J}_2)\times L^2_{\mathcal{F}_\tau}(\Omega; \hat{J}_1)$.
\begin{definition}\label{D1}
A quadruple of stochastic processes 
$$(y, Y, z, Z)\in C_{\mathbb{F}}([0,\tau];L^2(\Omega; \hat{J}_2))\times L^2_{\mathbb{F}}(0,\tau;(L^2(G))^3)\times C_{\mathbb{F}}([0,\tau];L^2(\Omega; \hat{J}_1))\times L^2_{\mathbb{F}}(0,\tau;(L^2(G))^3)$$ 
is called a solution of the system $(\ref{20a})$ if for any $\tau\in(0,T]$, $\phi\in J_3$ and $a.e.$ $(t, \omega)\in [0, \tau]\times\Omega$, it holds that 
\begin{equation}
\label{D1W}
\begin{aligned}
&\int_G [y(x, \tau)\cdot\phi(x)-y(x,t)\cdot\phi(x)]{\rm d}x
+\int_t^\tau\int_G z\cdot{\rm curl}~\phi{\rm d}x{\rm d}s
=\int_t^\tau\int_G \phi(x)\cdot Y(x,s){\rm d}x{\rm d}W(s),\\\ns\ds
&\int_G [z(x, \tau)\cdot\phi(x)-z(x,t)\cdot\phi(x)]{\rm d}x
-\int_t^\tau\int_G y\cdot{\rm curl}~\phi{\rm d}x{\rm d}s
=\int_t^\tau \int_G\phi(x)\cdot Z(x,s){\rm d}x{\rm d}W(s).
\end{aligned}
\end{equation}
\end{definition}
%We have the following known well-posedness result for (\ref{20a}). 
\begin{lemma}\label{l1}
For any $(y_\tau,z_\tau)\in L^2_{\mathcal{F}_\tau}(\Omega;\hat{J}_2 \times \hat{J}_1)$, the system $(\ref{20a})$  admits a unique solution $(y, Y, z, Z)$. Moreover, 
\begin{eqnarray}\label{l11}
\begin{array}{ll}
&|y|_{C_{\mathbb{F}}([0,\tau];L^2(\Omega; \hat{J}_2))}
+|Y|_{L^2_{\mathbb{F}}(0,\tau;(L^2(G))^3)}
+|z|_{C_{\mathbb{F}}([0,\tau];L^2(\Omega; \hat{J}_1))}
+|Z|_{L^2_{\mathbb{F}}(0,\tau;(L^2(G))^3)}\\\ns
&\leq C(|y_\tau|_{L^2_{\mathcal{F}_\tau}(\Omega;\hat{J}_2)}
+|z_\tau|_{L^2_{\mathcal{F}_\tau}(\Omega;\hat{J}_1)}),
\end{array}
\end{eqnarray}
where %$r_1=|a_1|_{L^\infty_{\mathbb{F}}(0,T;L^\infty(G))}+|a_2|_{L^\infty_{\mathbb{F}}(0,T;L^\infty(G))}+|b_1|_{L^\infty_{\mathbb{F}}(0,T;L^\infty(G))}+|b_2|_{L^\infty_{\mathbb{F}}(0,T;L^\infty(G))}$ and 
$C$ is a constant independent of $(y, Y, z, Z)$.
\end{lemma}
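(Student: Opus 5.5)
We treat (\ref{20a}) as an abstract backward stochastic evolution equation and invoke the general well-posedness theory for such equations (e.g. \cite[Theorem 4.10]{LZ1}), with \cite[Section 2.6]{LZ1} supplying the functional framework. Set $\mathbf{X}=\hat J_2\times\hat J_1$ and define the Maxwell operator $\mathcal{A}(y,z)=(\mbox{curl}\,z,\,-\mbox{curl}\,y)$. The plan has two parts: check that $\mathcal{A}$ generates a $C_0$ unitary group on $\mathbf{X}$, and then read off both existence and the estimate (\ref{l11}) from the mild-solution formula.

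\textbf{Step 1: the group on the energy space.} We begin in the $L^2$-type space $J_2\times J_1$ and take the domain
$$D(\mathcal{A})=\{(y,z)\in J_2^1\times J_1\,:\,\mbox{curl}\,z\in (L^2(G))^3,\ z\times\nu=0\}.$$
The relations $z\times\nu=0$, $y\cdot\nu=0$ and $\mbox{div}=0$ are exactly what is needed for the Green formula
$$\int_G \mbox{curl}\,z\cdot y\,dx=\int_G z\cdot \mbox{curl}\,y\,dx$$
to hold with no boundary term. Consequently $\mathcal{A}$ is skew-adjoint, and Stone's theorem gives a unitary group $e^{t\mathcal{A}}$. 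Two further checks are needed:
\begin{itemize}
\item the range of $\mathcal{A}$ stays divergence-free, since $\mbox{div}\,\mbox{curl}=0$;
\item the normal trace of $\mbox{curl}\,z$ vanishes when $z\times\nu=0$, so the $J_2$ constraint is preserved.
\end{itemize}
We then pass to the dual spaces $\hat J_2\times\hat J_1$ by extrapolation, taking duality with respect to the pivot spaces $J_2$ and $J_1$. Because the group is unitary, its extension to this extrapolation space is again a unitary $C_0$ group.

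\textbf{Step 2: existence and the a priori estimate.} The backward equation is $dU=\mathcal{A}U\,dt+V\,dW$ with $U(\tau)=U_\tau$. Its solution comes from the martingale representation theorem: write
$$e^{-(\tau-t)\mathcal{A}}\,U_\tau=\mathbb{E}\big[e^{-(\tau-t)\mathcal{A}}U_\tau\,\big|\,\mathcal{F}_t\big]+\int_t^\tau \cdots\,dW.$$
Equivalently, set $\xi=e^{-\tau\mathcal{A}}U_\tau$ and represent $\xi=\mathbb{E}\xi+\int_0^\tau \Phi\,dW$. Then
$$U(t)=e^{t\mathcal{A}}\,\mathbb{E}[\xi\,|\,\mathcal{F}_t],\qquad V(t)=e^{t\mathcal{A}}\Phi(t).$$
The estimate (\ref{l11}) follows from the Itô isometry together with unitarity of the group, with $C$ depending only on $T$. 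Time continuity in $L^2(\Omega;\mathbf{X})$ follows from strong continuity of the group and continuity of the martingale $\mathbb{E}[\xi\,|\,\mathcal{F}_t]$.

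\textbf{Step 3: weak formulation and uniqueness.} We verify (\ref{D1W}) by pairing with $\phi\in J_3$. For $\phi\in J_3$, $\mbox{curl}\,\phi$ is smooth and $\phi$ lies in the domain of the adjoint, so mild solutions are weak solutions in the sense of Definition \ref{D1}. Uniqueness follows by duality: the difference of two solutions has zero terminal data, and testing against the forward group (a density argument using $J_3$) forces it to vanish.

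\textbf{Main obstacle: the space of $Y,Z$.} Definition \ref{D1} places $Y,Z$ in $L^2_{\mathbb{F}}(0,\tau;(L^2(G))^3)$, but Step 2 only gives the diffusion terms in the dual space $\mathbf{X}$. I would first check whether the statement should be read with $Y,Z$ in the corresponding $\hat J$ spaces. If the stated $L^2$ regularity is genuinely required, I would prove it by approximation: take smooth terminal data in $D(\mathcal{A})$, derive energy identities via Itô's formula, and pass to the limit. I expect this step to need care, and it may require $(y_\tau,z_\tau)$ to lie in $L^2$-based spaces rather than their duals.
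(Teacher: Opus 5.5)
The paper states this lemma without proof, as a known consequence of the well-posedness theory for backward stochastic evolution equations. Your Steps 1--2 are precisely the argument behind that theory: a skew-adjoint Maxwell operator on $J_2\times J_1$, extension of its group to $\hat J_2\times\hat J_1$, and solution via martingale representation. So the route is the intended one.

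One detail in Step 1. Green's formula, in which only $z\times\nu=0$ matters, gives skew-symmetry. Skew-adjointness also needs $D(\mathcal{A}^*)=J_2^1\times J_1^1$, which follows from the Helmholtz decompositions $(L^2(G))^3=J_2\oplus\nabla H^1(G)=J_1\oplus\nabla H^1_0(G)$.

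\textbf{The obstacle you flag cannot be overcome.} The statement is false as written, so you should abandon the approximation plan.

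\begin{itemize}
\item Take $(y_\tau,z_\tau)=W(\tau)\psi$ with a deterministic $\psi\in J_2\times J_1$. By It\^o's product rule the solution is $(y,z)(t)=W(t)e^{(t-\tau)\mathcal{A}}\psi$ and $(Y,Z)(t)=e^{(t-\tau)\mathcal{A}}\psi$.
\item By unitarity, the $Y,Z$ part of the left side of (\ref{l11}) is at least $\sqrt{\tau}\,|\psi|_{J_2\times J_1}$.
\item The right side is at most a constant times $\sqrt{\tau}\,|\psi|_{\hat J_2\times\hat J_1}$.
\item Since $\mathcal{A}$ is unbounded, the $\hat J$ norm is not equivalent to the $L^2$ norm on $J_2\times J_1$, so (\ref{l11}) fails.
\item For $\psi\in(\hat J_2\times\hat J_1)\setminus(J_2\times J_1)$, the diffusion terms are not in $L^2$ for any $t$, because the group maps $J_2\times J_1$ onto itself.
\end{itemize}

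What your Step 2 actually proves is a corrected lemma, in one of two forms:
\begin{itemize}
\item keep $\hat J$ data and place $(Y,Z)$ in $L^2_{\mathbb{F}}(0,\tau;\hat J_2\times\hat J_1)$; or
\item run the identical argument in $J_2\times J_1$. For data in $L^2_{\mathcal{F}_\tau}(\Omega;J_2\times J_1)$ this gives $(y,z)\in C_{\mathbb{F}}([0,\tau];L^2(\Omega;J_2\times J_1))$, $Y,Z\in L^2_{\mathbb{F}}(0,\tau;(L^2(G))^3)$, and (\ref{l11}) with $L^2$ norms throughout.
\end{itemize}
The second form is what Lemma \ref{L2} and Theorem \ref{T101} actually use.

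\textbf{Step 3 has a separate gap.} Every $\phi\in J_3$ vanishes near $\Gamma$. So the boundary term $\int_\Gamma\phi\cdot(\nu\times z)\,{\rm d}\Gamma$, through which $z\times\nu=0$ enters a weak formulation, never appears. No density argument over $J_3$ can restore it.

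For instance, take a deterministic Maxwell field ($Y=Z=0$) that vanishes at $t=\tau$ but is driven by nonzero tangential data $z\times\nu=k$ with ${\rm div}_\Gamma k=0$. The condition $y\cdot\nu=0$ persists, since $\partial_t(y\cdot\nu)=\nu\cdot{\rm curl}\,z={\rm div}_\Gamma(z\times\nu)=0$. This field satisfies the weak identities for all $\phi\in J_3$, yet it is not zero.

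Uniqueness, and the identification of weak with mild solutions, require test functions ranging over all of $J_2^1\times J_1^1=D(\mathcal{A}^*)$.
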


Now we %can 
give the definition of the transposition solution to $(\ref{1a})$.
\begin{definition}\label{D11}
A pair of process $({\bf E},{\bf H})\in C_{\mathbb{F}}([0,T];L^2(\Omega; J_1^1))\times C_{\mathbb{F}}([0,T];L^2(\Omega; J_2^1))$ is called a transposition solution to system {\rm(\ref{1a})} if for any $\tau\in(0,T]$ and $(y,z)$, it holds that 
\begin{eqnarray}\label{D11W}
\begin{array}{ll}
&\mathbb{E}({\bf E}(\tau),z(\tau))-({\bf E}_0,z(0))-
\mathbb{E}({\bf H}(\tau),y(\tau))+({\bf H}_0,y(0))\\\ns\ds
&\ds=\mathbb{E}\int_0^\tau \int_G (f\cdot Z-g\cdot Y){\rm d}x{\rm d}t+\mathbb{E}\int_0^\tau\int_{\Gamma}y\cdot u{\rm d}\Gamma{\rm d}t.
\end{array}
\end{eqnarray}
Here, $(y, Y, z, Z)$ solves $(\ref{20a})$  with $(y_\tau,z_\tau)\in L^2_{\mathcal{F}_\tau}(\Omega;\hat{J}_2)\times L^2_{\mathcal{F}_\tau}(\Omega; \hat{J}_1)$.
\end{definition}
With the aid of Lemma \ref{l1}, by the well-posedness result for stochastic evolution  equations with an unbounded control operators in the sense of transposition solution (see \cite{LZ1}), we immediately have the  following well-posedness result for the system (\ref{1a}).
\begin{proposition}\label{p1}
For each $({\bf E}_0, {\bf H}_0)\in J_1^1\times J_2^1$, the system {\rm(\ref{1a})} admits  a unique transposition solution $({\bf E},{\bf H})$. Moreover,
\begin{eqnarray}\label{p11W}
\begin{array}{ll}
&|({\bf E},{\bf H})|_{ C_{\mathbb{F}}([0,T];L^2(\Omega; J_1^1))\times C_{\mathbb{F}}([0,T];L^2(\Omega; J_2^1))}\\\ns\ds
&\leq C(|{\bf E}_0|_{J_1^1}+|{\bf H}_0|_{J_2^1}
+|f|_{L^2_\mathbb{F}(0,T; (L^2(G))^3)}
+|g|_{L^2_\mathbb{F}(0,T; (L^2(G))^3)}
+|u|_{L^2_\mathbb{F}(0,T; (L^2(\Gamma))^3)}).
\end{array}
\end{eqnarray}
\end{proposition}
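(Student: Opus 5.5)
The plan is the standard transposition argument: use Lemma \ref{l1} to build a bounded linear functional of the terminal data of the backward system (\ref{20a}), and identify $({\bf E}(\tau),{\bf H}(\tau))$ through the Riesz representation theorem.

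\textbf{Step 1: the functional.} Fix $\tau\in(0,T]$. For terminal data $(y_\tau,z_\tau)\in L^2_{\mathcal{F}_\tau}(\Omega;\hat{J}_2\times\hat{J}_1)$, let $(y,Y,z,Z)$ be the solution of (\ref{20a}) given by Lemma \ref{l1}. Define
\begin{eqnarray*}
\Lambda_\tau(y_\tau,z_\tau)=({\bf E}_0,z(0))-({\bf H}_0,y(0))+\mathbb{E}\int_0^\tau\int_G(f\cdot Z-g\cdot Y){\rm d}x{\rm d}t+\mathbb{E}\int_0^\tau\int_\Gamma y\cdot u\,{\rm d}\Gamma{\rm d}t .
\end{eqnarray*}
By uniqueness in Lemma \ref{l1}, $\Lambda_\tau$ is linear. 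The first two terms are controlled by pairing ${\bf E}_0\in J_1^1$ against $z(0)\in\hat{J}_1$, and ${\bf H}_0\in J_2^1$ against $y(0)\in\hat{J}_2$, followed by (\ref{l11}). The interior control terms are bounded by Cauchy--Schwarz and (\ref{l11}).

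\textbf{Step 2: the boundary term.} This is the main obstacle. The quantity $\mathbb{E}\int_0^\tau\int_\Gamma |y|^2$ is not controlled by the $\hat{J}_2$-norm bound in (\ref{l11}). What is needed is a hidden-regularity estimate of the form
\begin{eqnarray*}
\mathbb{E}\int_0^\tau\int_\Gamma|y|^2{\rm d}\Gamma{\rm d}t\le C\,|(y_\tau,z_\tau)|^2_{L^2_{\mathcal{F}_\tau}(\Omega;\hat{J}_2\times\hat{J}_1)} .
\end{eqnarray*}
I would first prove it for smooth terminal data and then extend by density. For smooth data I would apply It\^o's formula to the multiplier $(y\times z)\cdot h$, where $h$ is a $C^1$ vector field extending $\nu$ near $\Gamma$. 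After integrating over $Q$, the boundary contributions reduce, using $y\cdot\nu=0$ and $z\times\nu=0$, to a multiple of $\int_\Gamma|y|^2$. The interior terms are bounded by the energy together with $\mathbb{E}\int|Y|^2+|Z|^2$. The It\^o correction $Y\times Z$ and the martingale parts are handled by taking expectation.

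The energy level in this argument has to match the duality between $J^1_i$ and $\hat{J}_i$, and this is the delicate point. If it does not match directly, I would apply the argument to the system obtained by lifting via a resolvent $(\lambda-A)^{-1}$ of the Maxwell operator, so that the estimate holds at the required level. This is the same device used in \cite{LZ1} for unbounded control operators, and Proposition \ref{p1} is cited from there, so I would invoke that framework whenever this admissibility estimate is verified.

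\textbf{Step 3: representation and uniqueness.} Once $\Lambda_\tau$ is bounded, the Riesz representation theorem, applied with respect to the dual pairing, gives a unique $({\bf E}(\tau),{\bf H}(\tau))\in L^2_{\mathcal{F}_\tau}(\Omega;J_1^1\times J_2^1)$ satisfying
\begin{eqnarray*}
\mathbb{E}({\bf E}(\tau),z_\tau)-\mathbb{E}({\bf H}(\tau),y_\tau)=\Lambda_\tau(y_\tau,z_\tau),
\end{eqnarray*}
which is exactly (\ref{D11W}). The norm of this representative is bounded by $\|\Lambda_\tau\|$, and by Steps 1 and 2 this yields (\ref{p11W}) uniformly in $\tau$.

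\textbf{Step 4: adaptedness and continuity.} Adaptedness holds because ${\bf E}(\tau),{\bf H}(\tau)$ are $\mathcal{F}_\tau$-measurable by construction. For continuity of $\tau\mapsto({\bf E}(\tau),{\bf H}(\tau))$ in $L^2(\Omega)$, I would approximate by smooth controls and initial data, for which the solution is classical and continuous, and then pass to the limit using the uniform bound from Step 3. Uniqueness follows directly from (\ref{D11W}) holding for all terminal data.
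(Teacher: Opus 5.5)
\textbf{There is a genuine gap at your Step 2, and the statement itself cannot be proved as written.}

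Your Steps 1, 3 and 4 follow the paper's route. The paper compresses everything into an appeal to Lemma \ref{l1} and the transposition theory of \cite{LZ1}. You are right that the one nontrivial input hidden in that appeal is an admissibility estimate for the boundary term; the abstract result of \cite{LZ1} presupposes it and the paper never checks it.

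Step 2 fails because the estimate it requires is false. To get $({\bf E}(\tau),{\bf H}(\tau))\in L^2_{\mathcal{F}_\tau}(\Omega;J_1^1\times J_2^1)$ from Riesz, you need $\mathbb{E}\int_0^\tau\int_\Gamma|y|^2{\rm d}\Gamma{\rm d}t$ bounded by the $\hat J_2\times\hat J_1$-norm of $(y_\tau,z_\tau)$. That is an $L^2(\Gamma)$ trace for a field one curl-derivative below $L^2$. For deterministic data one has $Y=Z=0$, and the paper's own inequality (\ref{T1011}) (for $T>T^*$) gives $|(y_T,z_T)|^2_{(L^2(G))^6}\le C\int_0^T\int_\Gamma|y|^2{\rm d}\Gamma{\rm d}t$. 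Combined with your estimate, this would make the $(L^2(G))^6$-norm equivalent on $J_2^1\times J_1^1$ to the strictly weaker $\hat J_2\times\hat J_1$-norm, which is absurd.

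The resolvent lifting does not repair this. It bounds the trace of the lifted field $(\lambda-A)^{-1}(y,z)$. That yields a transposition solution in a larger extrapolation space, not in $J_1^1\times J_2^1$.

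Independently, your multiplier computation is wrong even at the $L^2$ level. Since $y\cdot\nu=0$ and $z=(z\cdot\nu)\nu$ on $\Gamma$, the boundary contribution of $(y\times z)\cdot h$ with $h=\nu$ on $\Gamma$ is $\frac12\int_\Gamma(|z\cdot\nu|^2-|y|^2){\rm d}\Gamma$. So the normal component of $z$ survives, with the opposite sign to $|y|^2$. The paper can discard this term in (\ref{L14}) because there the sign is favorable; for an upper bound on $\int_\Gamma|y|^2$ it is not.

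No argument can close this gap, because Proposition \ref{p1} is not correct as stated:
\begin{itemize}
\item If $u\neq0$ is smooth, the solution satisfies ${\bf E}(t)\times\nu=u(t)\neq0$. This is incompatible with ${\bf E}(t)\in J_1^1$.
\item Take $u=0$, ${\bf E}_0={\bf H}_0=0$, $g=0$ and $f\equiv\phi$ with $\phi\in J_1\setminus J_1^1$ deterministic. The solution is the stochastic convolution $\int_0^tS(t-s)(\phi,0)\,{\rm d}W(s)$ of the Maxwell group $S$. Since $S$ is an isometry for the $J_1^1\times J_2^1$ graph norm, the second moment of that norm is $t$ times the (here infinite) squared $J_1^1$-norm of $\phi$. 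So the solution is not in $L^2(\Omega;J_1^1\times J_2^1)$, and (\ref{p11W}) fails.
\item The same mechanism undermines your Step 1, which takes from (\ref{l11}) an $L^2$ bound on $Y,Z$. Let $\Psi$ be a deterministic solution of the Maxwell part of (\ref{20a}) lying in $\hat J_2\times\hat J_1$ but not in $(L^2(G))^6$. Then $(y,z)=W(t)\Psi(t)$ solves (\ref{20a}) with $(Y,Z)=\Psi$, which is not in $L^2$.
\end{itemize}

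What your scheme does prove is the version with the spaces swapped. Take terminal data of (\ref{20a}) in $J_2^1\times J_1^1$. Then $y(t)$ has an $L^2(\Gamma)$ trace by the $H({\rm curl})\cap H({\rm div})$ trace theorem, so no hidden regularity is needed, and $Y,Z$ are automatically in $L^2$. The state $({\bf E},{\bf H})$ then lies in $C_{\mathbb{F}}([0,T];L^2(\Omega;\hat J_1\times\hat J_2))$, with initial data in $\hat J_1\times\hat J_2$.
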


%Now, we give 
Now we turn to the definition of the exact controllability for $(\ref{1a})$.

\begin{definition}\label{D2}
The system {\rm(\ref{1a})} is called  exactly controllable at time T if for any $({\bf E}_0, {\bf H}_0)\in J_1^1\times J_2^1$ and $({\bf E}_T,{\bf H}_T)\in L_{\mathcal{F}_T}^2(\Omega; J_1^1)\times L_{\mathcal{F}_T}^{2}(\Omega; J_2^1),$ one can find  a pair of control
$$
(f,g,u)\in L^2_\mathbb{F}(0,T; (L^2(G))^3)\times L^2_\mathbb{F}(0,T; (L^2(G))^3)\times L^2_\mathbb{F}(0,T; (L^2(\Gamma))^3),
$$
such that the solution $({\bf E}, {\bf H})$ of {\rm(\ref{1a})} corresponding to the above controls satisfies that $$({\bf E}(\cdot,T),{\bf H}(\cdot,T))=({\bf E}_T,{\bf H}_T).$$
\end{definition}

There have been several controllability results for stochastic PDEs
%partial differential equations  
(see \cite{fl, LL, l1, lw, TZ, yz, yq} and references therein). In the present paper, in a manner similar to the results in \cite{LZ1, yz}, we show that the stochastic Maxwell equation fails to be exactly controllable for any $T>0$ when stochastic noise acts only on one equation, even if the controls are allowed to act on the whole domain $G\times (0,T)$ and on the boundary $\Gamma\times (0,T)$; see Theorem \ref{T1a}. This phenomenon is in sharp contrast to the well-known controllability property of deterministic Maxwell equations. Nevertheless, for the stochastic Maxwell equation $(\ref{1a})$, we obtain the following exact controllability result:

\begin{theorem}\label{T10}
Let $T>T^*$. The %system 
stochastic Maxwell equation $(\ref{1a})$ is exactly controllable at time $T>0$.
\end{theorem}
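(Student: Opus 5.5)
We prove Theorem \ref{T10} by the classical duality (Hilbert uniqueness) argument, reducing exact controllability of $(\ref{1a})$ to an observability inequality for the backward system $(\ref{20a})$ with $\tau=T$. By the transposition identity $(\ref{D11W})$ with $\tau=T$, the map $(f,g,u)\mapsto({\bf E}(T),{\bf H}(T))$ (with $({\bf E}_0,{\bf H}_0)=0$) is a bounded linear operator $\mathcal{L}_T$ from the control space into $L^2_{\mathcal{F}_T}(\Omega;J_1^1)\times L^2_{\mathcal{F}_T}(\Omega;J_2^1)$ by Proposition \ref{p1}. By linearity it suffices to show that $\mathcal{L}_T$ is surjective: given arbitrary data, first solve $(\ref{1a})$ with zero controls from $({\bf E}_0,{\bf H}_0)$ and then steer the difference. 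Surjectivity of $\mathcal{L}_T$ is equivalent to the following inequality for its adjoint, which $(\ref{D11W})$ identifies as $(z_T,-y_T)\mapsto (Z,-Y,y|_\Gamma)$:
\begin{equation}\label{obsplan}
|y_T|^2_{L^2_{\mathcal{F}_T}(\Omega;\hat J_2)}+|z_T|^2_{L^2_{\mathcal{F}_T}(\Omega;\hat J_1)}\le C\Big(\mathbb{E}\int_0^T\!\!\int_\Gamma |y|^2{\rm d}\Gamma{\rm d}t+|Y|^2_{L^2_{\mathbb{F}}(0,T;(L^2(G))^3)}+|Z|^2_{L^2_{\mathbb{F}}(0,T;(L^2(G))^3)}\Big)
\end{equation}
for all solutions of $(\ref{20a})$. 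The pivot structure must be kept consistent here: $\hat J_1,\hat J_2$ are the duals of $J_1^1,J_2^1$. Given \eqref{obsplan}, the controls are obtained as the minimizer of the coercive, continuous and convex functional
\begin{equation*}
\frac12\,\mathbb{E}\int_0^T\Big(\int_G(|Y|^2+|Z|^2){\rm d}x+\int_\Gamma|y|^2{\rm d}\Gamma\Big){\rm d}t-\mathbb{E}\big(({\bf E}_T,z_T)-({\bf H}_T,y_T)\big),
\end{equation*}
and the Euler--Lagrange equation yields $f=Z$, $g=-Y$, $u=y|_\Gamma$ (up to signs).

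The core is \eqref{obsplan}, which we prove in two steps: first for smooth data in $L^2$-type energy, then by density and a duality/regularity shift to the $\hat J$ norms.

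\textbf{Energy identity.} By Itô's formula applied to $|y|^2+|z|^2$, and using $\int_G(y\cdot{\rm curl}\,z-z\cdot{\rm curl}\,y)\,{\rm d}x=0$ thanks to $z\times\nu=0$, we obtain
\begin{equation*}
\mathbb{E}\big(|y(t)|^2+|z(t)|^2\big)=\mathbb{E}\big(|y(T)|^2+|z(T)|^2\big)-\mathbb{E}\int_t^T(|Y|^2+|Z|^2)\,{\rm d}s .
\end{equation*}
Hence energies at different times differ only by the $Y,Z$ observation terms.

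\textbf{Multiplier identity.} Next comes a pointwise identity, which is the paper's Section 3. We apply Itô's formula to $(x\times z)\cdot y$, or equivalently use the multiplier $x\cdot$ applied to the Poynting-type quantity $y\times z\cdot x$, together with vector identities for ${\rm curl}$ and the constraint ${\rm div}=0$. Integrating over $Q$, we obtain
\begin{equation*}
\mathbb{E}\int_0^T\!\!\int_G(|y|^2+|z|^2)\,{\rm d}x{\rm d}t\le C\Big(\mathbb{E}\big[E(0)+E(T)\big]\Big)+\text{boundary terms}+C\,\mathbb{E}\int_Q(|Y|^2+|Z|^2) ,
\end{equation*}
with $E(t)=\int_G(|y|^2+|z|^2)\,{\rm d}x$ and constant $\sim 2M$. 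The Itô correction terms $Y\cdot(x\times Z)$ are bounded by the noise observation. On $\Gamma$, $z$ is normal and $y$ is tangential. The boundary integrand $(x\cdot\nu)(|z|^2-|y|^2)$ plus cross terms must be handled so that only $|y|^2$ on $\Gamma$ survives with a favorable sign for the $z$ part, using $0\le x\cdot\nu\le M_1$ from $(\ref{3a1})$. We combine this with the energy identity to replace $\int_0^T E$ by $T\,\mathbb{E}E(T)$ minus noise terms. This gives $(T-C_0(2M^2+1))\,\mathbb{E}E(T)\le C\cdot$observation, where $T>T^*$ from $(\ref{3ab1})$ guarantees positivity. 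A cut-off in time, $\theta(t)$ vanishing near $0$ and $T$, may be used to remove the $E(0),E(T)$ endpoint terms, which is likely where the factor $18$ originates.

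The main obstacle is the boundary term and the norm shift. Since $y$ only satisfies $y\cdot\nu=0$, the tangential trace of $z$ and the normal component of $z$ must be controlled without any observation of $z$ on $\Gamma$. We would try to show that, given $z\times\nu=0$ and ${\rm div}$-free, the $z$-boundary contribution has a sign that can be dropped. To pass from $L^2$ energy to the $\hat J_2\times\hat J_1$ norms, we would apply the $L^2$ observability to the time-integrated/curl-inverted system (the backward equation commutes with ${\rm curl}^{-1}$ on these spaces), accepting a compactness-free argument since compactness-uniqueness is unavailable in the stochastic setting.
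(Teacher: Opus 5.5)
Your proposal is correct. Its architecture matches the paper's: duality, then an $x$-multiplier observability estimate for (\ref{20a}) combined with the energy identity. Both key steps, however, are carried out differently.

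\emph{Observability.} The paper applies the weighted identity of Lemma \ref{221L1} to the cut-off pair $(\chi y,\chi z)$, so that all endpoint terms vanish. It pays for the commutator $\chi_t(y,z)$ with the restriction $T>18(2M^2+1)$. You keep the endpoint term instead. Set $e(t)=\mathbb{E}\int_G(|y(t)|^2+|z(t)|^2){\rm d}x$ and $m(t)=\mathbb{E}\int_G x\cdot(z\times y)(t)\,{\rm d}x$. Carrying out your It\^o computation gives
\begin{equation*}
\frac12\int_0^T e(t)\,{\rm d}t=m(0)-m(T)+\frac12\,\mathbb{E}\int_0^T\!\!\int_\Gamma (x\cdot\nu)\big(|y|^2-|z|^2\big){\rm d}\Gamma{\rm d}t+\mathbb{E}\int_Q x\cdot(Z\times Y)\,{\rm d}x{\rm d}t .
\end{equation*}
Here the boundary cross term $(x\cdot y)(y\cdot\nu)$ vanishes, and $(x\cdot z)(z\cdot\nu)=(x\cdot\nu)|z|^2$ flips the sign of the $z$ contribution. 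So the boundary ``obstacle'' you flag is removed by $x\cdot\nu\ge0$ from (\ref{3a1}), by the same mechanism as in (\ref{L13})--(\ref{L14}). Now use $|m(t)|\le\frac M2e(t)$, $e(0)\le e(T)$, and $e(t)\ge e(T)-\mathbb{E}\int_Q(|Y|^2+|Z|^2){\rm d}x{\rm d}t$. This yields
\begin{equation*}
(T-2M)\,e(T)\le M_1\mathbb{E}\int_0^T\!\!\int_\Gamma|y|^2{\rm d}\Gamma{\rm d}t+(T+M)\,\mathbb{E}\int_Q(|Y|^2+|Z|^2){\rm d}x{\rm d}t .
\end{equation*}
So no cut-off is needed, and $T>2M$ already suffices; this is implied by $T>T^*$, since $18(2M^2+1)>2M$. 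Your route is therefore shorter and gives a better time. The paper's zero-endpoint formulation (Theorem \ref{th1}) is what enables the cut-off argument, but it produces the crude $T^*$.

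\emph{Duality.} Your HUM minimization produces controls $(Z,-Y,y|_\Gamma)$ coming from an adjoint solution. Since ${\rm div}\,y={\rm div}\,z=0$ forces ${\rm div}\,Y={\rm div}\,Z=0$, the resulting $f,g$ are divergence-free, which (\ref{1a}) needs in order to keep ${\rm div}\,{\bf E}={\rm div}\,{\bf H}=0$. The paper's Hahn--Banach/Riesz extension does not give this without an extra projection.

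\emph{Norm shift.} You can drop the ``norm shift''. Since $|\cdot|_{\hat J_i}\le C|\cdot|_{L^2(G)}$, the $L^2$ estimate (\ref{T1011}) already gives your $\hat J$-version on the dense set of $L^2$ terminal data, and that is all the duality argument uses. The curl-inversion/time-integration device would change the observed quantity. It also does not map solutions of (\ref{20a}) to solutions, because time integration kills the martingale part.
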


\begin{remark}
It should be noted that the time $T$ required for the state to reach the target is restricted, and $T*$ depends on the size of the spatial domain $G$. 
Obviously, the constraint on 
$T$ here is not optimal, and this points to a direction for further investigation.
%Obviously, the constraint on $T$ here is not optimal, which is also a direction for further investigation in future work.
\end{remark}

\begin{remark}
It is worth noting that three controls $f$ and $g$ $($in the diffusion terms$)$, $u$ $($on the boundary$)$ are required  in $(\ref{1a})$ to obtain exact controllability. Moreover, the controls $f$ and $g$ in the diffusion terms are active on the whole domain. In fact, these conditions  cannot be weakened. More details will be given in Theorem {\rm \ref{T41}}.
\end{remark}

\begin{remark}
In {\rm \cite{LL, yz}}, the exact controllability of the stochastic wave equation and the stochastic beam equation is investigated.
%,  where the equations are formed by coupling a stochastic ordinary differential equation and a stochastic partial differential equation. 
However,  in this paper, we study the exact controllability of the stochastic Maxwell equation which is a coupled system consisting of two stochastic partial differential equations.  Its structure is more complex and differs significantly from that of the stochastic wave equation and the stochastic beam equation.
\end{remark}

\begin{remark}
Although the boundary control $u$ cannot be dropped in $(\ref{1a})$, it  is worthwhile to investigate its weakening to local boundary control. This seems to  be true from the existing controllability results of deterministic Maxwell systems (see \cite{dgl}).
\end{remark}

%To prove Theorem $\ref{T10}$, following the standard duality argument, we only need to prove the following observability estimate.

By the standard duality argument, the controllability result of $(\ref{1a})$ can be reduced to the following observability estimate of the adjoint system $(\ref{20a})$.
%To prove Theorem $\ref{T10}$, based on the standard duality argument, it suffices to prove the following observability estimate.

\begin{theorem}\label{T101}
There exists a constant $C>0$ such that for any solution to $(\ref{20a})$, it holds that 
\begin{eqnarray}\label{T1011}
\begin{array}{ll}
\ds\mathbb{E}\int_G \big[|y(x,T)|^2+|z(x,T)|^2\big]{\rm d}x\leq C\mathbb{E}\int_0^T\int_{\Gamma}|y|^2{\rm d}\Gamma{\rm d}t
+C\mathbb{E}\int_Q\big(|Y|^2+|Z|^2\big){\rm d}x{\rm d}t,
\end{array}
\end{eqnarray}
where $\tau=T$ in $(\ref{20a})$.
\end{theorem}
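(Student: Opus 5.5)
We prove Theorem \ref{T101} with the classical multiplier method for Maxwell systems, adapted to It\^o calculus, plus an energy estimate for the backward equation. We work with smooth (strong) solutions, obtained by approximating the terminal data, and pass to the limit at the end using Lemma \ref{l1}. Throughout, we use the multiplier $x$, the bound $|x|\le M$ and the sign condition $x\cdot\nu\ge0$ from (\ref{3a1}).

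\emph{Step 1: pointwise identity.} Apply It\^o's formula to $(x\times y)\cdot z$, or equivalently to $x\cdot(y\times z)$, with $(y,Y,z,Z)$ solving (\ref{20a}). Use $\mathrm{d}y=\mathrm{curl}\,z\,\mathrm{d}t+Y\mathrm{d}W$, $\mathrm{d}z=-\mathrm{curl}\,y\,\mathrm{d}t+Z\mathrm{d}W$ and the divergence-free conditions. The drift terms then rearrange into
\[
-\tfrac12(|y|^2+|z|^2)
\]
plus a divergence of a boundary flux, in the same way as in the deterministic identity of \cite{xu1}. The It\^o correction contributes $x\cdot(Y\times Z)\,\mathrm{d}t$, and there is also a martingale term.

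\emph{Step 2: integrate the identity.} Integrate over $Q$ and take expectations; the martingale terms vanish. The boundary terms are handled with $y\cdot\nu=0$ and $z\times\nu=0$. Since $z$ has only a normal component on $\Gamma$, all boundary contributions reduce to expressions in the tangential part of $y$ and the normal part of $z$, weighted by $x\cdot\nu\ge0$. One has to check the signs: the terms involving $z$ must either have a favourable sign or be controlled, because only $\int|y|^2$ appears on the right of (\ref{T1011}). This gives
\[
\tfrac12\,\mathbb E\!\int_Q(|y|^2+|z|^2)
\le 2M\sup_t\mathbb E\!\int_G(|y|^2+|z|^2)+CM_1\mathbb E\!\int_0^T\!\!\int_\Gamma|y|^2+CM\,\mathbb E\!\int_Q(|Y|^2+|Z|^2).
\]
Here the first term on the right comes from the boundary-in-time terms at $t=0$ and $t=T$.

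\emph{Step 3: energy estimate.} Let $\mathcal E(t)=\mathbb E\int_G(|y|^2+|z|^2)$. It\^o's formula for $|y|^2+|z|^2$ shows that the curl terms cancel up to a boundary term $\int_\Gamma (z\times y)\cdot\nu$, which vanishes since $z\times\nu=0$. Hence
\[
\mathcal E(t)=\mathcal E(T)-\mathbb E\int_t^T\!\!\int_G(|Y|^2+|Z|^2),
\]
so $\mathcal E(T)\le \mathcal E(t)+\mathbb E\int_Q(|Y|^2+|Z|^2)$ for all $t$, and likewise $\sup_t\mathcal E(t)\le\mathcal E(T)$. Integrating in $t$ gives $T\,\mathcal E(T)\le \mathbb E\int_Q(|y|^2+|z|^2)+T\,\mathbb E\int_Q(|Y|^2+|Z|^2)$.

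\emph{Step 4: absorb.} Combining Steps 2 and 3 yields
\[
\bigl(\tfrac T2-CM\bigr)\mathcal E(T)\le C\,\mathbb E\int_0^T\!\!\int_\Gamma|y|^2+C(T)\,\mathbb E\int_Q(|Y|^2+|Z|^2).
\]
Since $T>T^*$, the coefficient on the left is positive, which proves (\ref{T1011}). The specific constant $18(2M^2+1)$ suggests the paper uses Cauchy--Schwarz with weights, for example $|x\cdot(Y\times Z)|\le \varepsilon|x|^2\cdot(\dots)$, and possibly a cut-off function in time $\theta(t)$ to handle endpoint terms. I would insert such a cut-off only if the endpoint terms from Step 2 cannot be absorbed directly.

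\emph{Main obstacle.} The hardest part is Step 2: verifying that the boundary integrand in the Maxwell multiplier identity is controlled by $|y|^2$ alone, with no uncontrolled normal $z$ term. This relies on $x\cdot\nu\ge0$ and on $\mathrm{div}=0$ near $\Gamma$. It is also essential to justify these manipulations at the low regularity given by Lemma \ref{l1}, which we do by density before passing to the limit.
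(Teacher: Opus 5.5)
Your argument is correct, but it handles the decisive step differently from the paper.

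\textbf{The paper's route.} The paper uses the weighted multipliers $\nabla\ell\times z-\ell_t y$ and $-\nabla\ell\times y-\ell_t z$ with $\ell=\alpha\lambda^2e^{\alpha\lambda}|x|^2-e^{c_1\lambda(t-T/2)^2}$. It applies the resulting estimate (Theorem \ref{th1}) not to $(y,z)$ but to $(\chi y,\chi z)$, where $\chi$ is a time cut-off, so that no terms at $t=0,T$ appear. The price is the drift sources $\chi_t y,\chi_t z$ on $\mathcal{J}$, which are absorbed through the energy identity. The bookkeeping $\beta_{\min}\ge\frac13\alpha\lambda^2e^{\alpha\lambda}$, $\gamma_{\max}\le(2M^2+1)\alpha\lambda^2e^{\alpha\lambda}$, $|\chi_t|\le 3/(\epsilon T)$ is exactly what produces $T^*=18(2M^2+1)$.

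\textbf{Your route.} Your identity is exactly (\ref{L1p0}) with $\ell=\frac12|x|^2$, i.e.\ no time weight, and your flux is the paper's $V$. You do not cut off. Instead you keep the endpoint terms and bound them by $\frac M2\bigl(\mathcal{E}(0)+\mathcal{E}(T)\bigr)\le M\mathcal{E}(T)$, using the monotonicity of $\mathcal{E}$. This is shorter and gives a better time. With that sharp constant (instead of your $2M$), you obtain (\ref{T1011}) for every $T>2M$. This is implied by $T>T^*$, since $36M^2+18>2M$ for all $M$. It also confirms the paper's remark that its restriction on $T$ comes from the cut-off. The paper's formulation does yield a reusable inequality for processes vanishing at both ends with arbitrary drift and diffusion sources. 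However, $\beta$ and $\gamma$ scale identically in $\lambda$, so the weight brings no gain here.

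\textbf{Points to tidy.}

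The boundary check you flag as the main obstacle is immediate. Set $\Phi=y(x\cdot y)+z(x\cdot z)-\frac12x(|y|^2+|z|^2)$. The drift of $x\cdot(y\times z)=(x\times y)\cdot z$ is $\frac12(|y|^2+|z|^2)+\mathrm{div}\,\Phi+x\cdot(Y\times Z)$. So this quantity carries $+\frac12$, not $-\frac12$; the slip is harmless. On $\Gamma$, the conditions $y\cdot\nu=0$ and $z=(z\cdot\nu)\nu$ give $\Phi\cdot\nu=\frac12(x\cdot\nu)(|z|^2-|y|^2)$. Hence $\frac12\mathbb{E}\int_Q(|y|^2+|z|^2)$ equals
\begin{itemize}
\item the endpoint terms,
\item minus $\mathbb{E}\int_Q x\cdot(Y\times Z)$,
\item plus $\frac12\mathbb{E}\int_0^T\int_\Gamma(x\cdot\nu)(|y|^2-|z|^2)$.
\end{itemize}
The $z$-part can be dropped because $x\cdot\nu\ge0$; this is the same sign fact the paper uses in (\ref{L14}). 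The divergence-free conditions are needed only in the interior computation, not near $\Gamma$.

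Make the constant in Step 4 explicit. Positivity of $\frac T2-CM$ is exactly what $T>T^*$ has to guarantee, so $C$ cannot be left unspecified.

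Lemma \ref{l1} does not control $y|_{\Gamma}$ in $L^2$, so your density passage actually needs a trace estimate. The paper is equally silent on this point, so it is not a gap relative to its proof.
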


%\newpage

\section{A pointwise identity}
\label{sec;3}

This section is devoted to deriving a pointwise identity for the stochastic Maxwell operator, which plays a key role in establishing the observability estimate. To this end, we first introduce an auxiliary function. 
For any parameters $\lambda \geq 1$ and $c_1 > 0$, set
\[
\ell = \alpha \lambda^2 e^{\alpha \lambda}|x|^2 - e^{c_1 \lambda \bigl(t - \frac{T}{2}\bigr)^2},
\]
where the constant $\alpha$ satisfies
\[
\alpha \geq \max\left\{9c_1^2 T^2,\; \frac{1}{4}c_1 T^2,\; c_1 T M,\; \frac{9}{2M}c_1 T,\; \frac{1}{2}c_1 T,\; 18c_1\right\}.
\]
Then we have the following pointwise identity.

% This section is devoted to giving a pointwise identity for stochastic Maxwell operator, which will play a key role in  establishing the observability estimate.  
% Here, some auxiliary functions are introduced.
% For any parameters $\lambda\geq 1$, $c_1>0$, put
% \begin{eqnarray*}\label{3d}
%       \ell=\alpha\lambda^2e^{\alpha\lambda}|x|^2-e^{c_1\lambda(t-\frac{T}{2})^2},
% \end{eqnarray*}
% where the constant $\alpha\geq \max\{9c_1^2T^2, \frac{1}{4}c_1T^2, c_1TM, \frac{9}{2M}c_1T, \frac{1}{2}c_1T, 18c_1\}$.

% We have the following pointwise identity.

\begin{lemma}\label{221L1}
Let $y$ be an $\hat{J}_2$-valued semimartingale and $z$ be an $\hat{J}_1$-valued semimartingale.
Then, for a.e. $x\in G$, and $\mathbb{P}$-a.s. $\omega\in\Omega$,
\begin{eqnarray}\label{L1p0}
\begin{array}{ll}
&(\nabla\ell\times z-\ell_t y)\cdot\big({\rm d}y-{\rm curl}~z{\rm d}t\big)
+(-\nabla\ell\times y-\ell_t z)\cdot\big({\rm d}z+{\rm curl}~y{\rm d}t\big)\\\ns\ds
&\ds=-\frac{1}{2}{\rm d}\big(\ell_t|y|^2+\ell_t|z|^2\big)+\frac{1}{2}\ell_t|{\rm d}y|^2+\frac{1}{2}\ell_t|{\rm d}z|^2+{\rm div} V{\rm d}t
+K{\rm d}t+\nabla\cdot(z\times \ell_t y){\rm d}t\\\ns
&\ds\q+\frac{1}{2}(\ell_{tt}+\Delta\ell)|y|^2{\rm d}t
+\frac{1}{2}(\ell_{tt}+\Delta\ell)|z|^2{\rm d}t
+{\rm d}(\nabla\ell\cdot z\times y)
-\nabla\ell\cdot{\rm d}z\times{\rm d} y,
\end{array}
\end{eqnarray}
where
\begin{eqnarray*}
\left\{
\begin{array}{ll}
&\ds V=y(\nabla\ell\cdot y)-z\times(\nabla \ell\times  z)-\frac{1}{2}\nabla\ell |y|^2
+\frac{1}{2}\nabla\ell |z|^2,\\\ns
&K=-(\ell_{x_1x_1}y_1^2+\ell_{x_2x_2}y_2^2+\ell_{x_3x_3}y_3^2)-2(y_1y_2\ell_{x_1x_2}+y_1y_3\ell_{x_1x_3}+y_2y_3\ell_{x_2x_3})\\\ns
&\qq-(\ell_{x_1x_1}z_1^2+\ell_{x_2x_2}z_2^2+\ell_{x_3x_3}z_3^2)
-2(z_1z_2\ell_{x_1x_2}+z_1z_3\ell_{x_1x_3}+z_2z_3\ell_{x_2x_3}),
\end{array}
\right.
\end{eqnarray*}
and $({\rm d}y)^2$, $({\rm d}z)^2$ denote the quadratic variation processes of $y$ and $z$, respectively.
\end{lemma}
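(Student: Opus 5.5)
The identity is purely algebraic plus Itô's formula, so I will expand the left-hand side term by term and match the pieces on the right. I split the left-hand side into four groups.
\begin{itemize}
\item[(a)] The time-derivative terms carrying $\ell_t$: $-\ell_t y\cdot{\rm d}y-\ell_t z\cdot{\rm d}z$.
\item[(b)] The cross time terms: $(\nabla\ell\times z)\cdot{\rm d}y-(\nabla\ell\times y)\cdot{\rm d}z$.
\item[(c)] The curl terms carrying $\ell_t$: $\ell_t y\cdot{\rm curl}\,z\,{\rm d}t-\ell_t z\cdot{\rm curl}\,y\,{\rm d}t$.
\item[(d)] The curl terms carrying $\nabla\ell$: $-(\nabla\ell\times z)\cdot{\rm curl}\,z\,{\rm d}t-(\nabla\ell\times y)\cdot{\rm curl}\,y\,{\rm d}t$.
\end{itemize}
Each group is handled separately; summing the results gives (\ref{L1p0}).

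\emph{Group (a).} Itô's formula gives ${\rm d}(\ell_t|y|^2)=\ell_{tt}|y|^2{\rm d}t+2\ell_t y\cdot{\rm d}y+\ell_t|{\rm d}y|^2$, since $\ell$ is deterministic and smooth. Hence
\[
-\ell_t y\cdot{\rm d}y=-\tfrac12{\rm d}(\ell_t|y|^2)+\tfrac12\ell_{tt}|y|^2{\rm d}t+\tfrac12\ell_t|{\rm d}y|^2,
\]
and the analogous identity holds for $z$. This produces the first three terms on the right and the $\ell_{tt}$ parts of the last line.

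\emph{Group (c).} The vector identity $\nabla\cdot(z\times y)=y\cdot{\rm curl}\,z-z\cdot{\rm curl}\,y$ gives
\[
\ell_t\,(y\cdot{\rm curl}\,z-z\cdot{\rm curl}\,y)=\nabla\cdot(z\times\ell_t y),
\]
because $\ell_t$ depends only on $t$. This is exactly the term $\nabla\cdot(z\times\ell_t y)\,{\rm d}t$.

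\emph{Group (b).} By the scalar triple product, $(\nabla\ell\times z)\cdot{\rm d}y=\nabla\ell\cdot(z\times{\rm d}y)$ and $-(\nabla\ell\times y)\cdot{\rm d}z=\nabla\ell\cdot({\rm d}z\times y)$. Itô's product rule gives
\[
{\rm d}(z\times y)={\rm d}z\times y+z\times{\rm d}y+{\rm d}z\times{\rm d}y.
\]
Since $\nabla\ell$ is time-independent and $\ell_t$ does not depend on $x$, it follows that $\nabla\ell\cdot{\rm d}(z\times y)={\rm d}(\nabla\ell\cdot z\times y)$. Group (b) therefore equals ${\rm d}(\nabla\ell\cdot z\times y)-\nabla\ell\cdot{\rm d}z\times{\rm d}y$, which matches the right-hand side.

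\emph{Group (d).} This is the main computational step. I will use the classical multiplier identity
\[
-(\nabla\ell\times y)\cdot{\rm curl}\,y={\rm div}\Big(y(\nabla\ell\cdot y)-\tfrac12\nabla\ell|y|^2\Big)-\sum_{j,k}\ell_{x_jx_k}y_jy_k+\tfrac12\Delta\ell|y|^2-(\nabla\ell\cdot y)\,{\rm div}\,y.
\]
I will verify it in index notation via $(a\times b)\cdot(\nabla\times c)$ expansions. The key ingredients are $y\times{\rm curl}\,y=\tfrac12\nabla|y|^2-(y\cdot\nabla)y$ and the product rule for ${\rm div}(y(\nabla\ell\cdot y))$. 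The term with ${\rm div}\,y$ drops because ${\rm div}\,y=0$.

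For $z$, I will use the identity in the form $-(\nabla\ell\times z)\cdot{\rm curl}\,z={\rm div}\big(-z\times(\nabla\ell\times z)+\tfrac12\nabla\ell|z|^2\big)+\dots$. The expansion $z\times(\nabla\ell\times z)=\nabla\ell|z|^2-z(\nabla\ell\cdot z)$ reduces it to the same structure, with ${\rm div}\,z=0$. Together these give ${\rm div}V$, the Hessian quadratic forms collected in $K$, and the $\tfrac12\Delta\ell$ parts of the last line.

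I expect the main obstacle to be the sign and index bookkeeping in group (d). In particular I must check that the $z$-part yields the same sign for the Hessian form, matching $K$, and the same $+\tfrac12\Delta\ell|z|^2$. I plan to verify this by direct expansion in components. The manipulations are formal for $\hat J$-valued semimartingales; they can be justified by first proving the identity for smooth processes and then passing to the limit by density.
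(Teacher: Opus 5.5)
Your proposal is correct and follows essentially the paper's proof: the same four-way split of the left-hand side, It\^o's formula for the $\ell_t$ terms, It\^o's product rule giving ${\rm d}(\nabla\ell\cdot z\times y)-\nabla\ell\cdot{\rm d}z\times{\rm d}y$, the identity $\nabla\cdot(z\times y)=y\cdot{\rm curl}\,z-z\cdot{\rm curl}\,y$, and a componentwise multiplier computation using ${\rm div}\,y={\rm div}\,z=0$ for the $\nabla\ell$-curl terms. The only differences are minor. For the $z$-term you reuse the $y$-identity and convert it via $z\times(\nabla\ell\times z)=\nabla\ell|z|^2-z(\nabla\ell\cdot z)$, whereas the paper expands ${\rm curl}(\nabla\ell\times z)$ directly. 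You also state explicitly that $\nabla\ell$ is time-independent and $\ell_t$ is $x$-independent, which the paper uses without comment.
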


\noindent{\bf Proof.} 
A straight calculation leads that
\begin{equation}\label{L1p5}
\begin{array}{ll}
&(\nabla\ell\times z-\ell_t y)\cdot\big(\mbox{d}y-{\rm curl}~z\mbox{d}t\big)
+(-\nabla\ell\times y-\ell_t z)\cdot\big(\mbox{d}z+{\rm curl}~y\mbox{d}t\big)\\\ns
&=(\nabla\ell\times z\cdot\mbox{d}y-\nabla\ell\times y\cdot\mbox{d}z)
-(\ell_t y\cdot\mbox{d}y+\ell_t z\cdot\mbox{d}z)\\\ns
&\q-(\nabla\ell\times z\cdot\nabla\times z+\nabla\ell\times y\cdot\nabla\times y)\mbox{d}t
+\ell_t(y\cdot\nabla\times z-z\cdot\nabla\times y)\mbox{d}t.
\end{array}
\end{equation}
%Now we  analyze the right-hand side of $(\ref{L1p5})$. First, by $a\cdot b\times c=b\cdot c\times a=c\cdot a \times b$, we have
%where
Now we analyze the right-hand side of $(\ref{L1p5})$. First, by $a\cdot b\times c=b\cdot c\times a=c\cdot a \times b$, we have
\begin{equation}\label{L1p6}
\begin{array}{ll}
&\nabla\ell\times z\cdot\mbox{d}y-\nabla\ell\times y\cdot\mbox{d}z
=\nabla\ell\cdot(z\times\mbox{d}y+\mbox{d}z\times y)=\mbox{d}(\nabla\ell\cdot z\times y)
-\nabla\ell\cdot\mbox{d}z\times\mbox{d} y.
\end{array}
\end{equation} 
Second, 
\begin{equation}\label{L1p7}
\begin{array}{ll}
\ds-\ell_t y\cdot\mbox{d}y-\ell_t z\cdot\mbox{d}z
=-\frac{1}{2}\mbox{d}(\ell_t|y|^2+\ell|z|^2)
+\frac{1}{2}\ell_{tt}(|y|^2+|z|^2)\mbox{d}t
+\frac{1}{2}\ell_t(|\mbox{d}y|^2+|\mbox{d}z|^2).
\end{array}
\end{equation}
Third, based on 
%$\nabla\cdot(a\times b)=b\cdot(\nabla\times a)-a\cdot(\nabla\times b)$ and 
$\nabla\times(a\times b)=(\nabla\cdot b)a-(\nabla\cdot a)b+(b\cdot\nabla)a-(a\cdot\nabla)b,$ one can obtain that
\begin{equation}\label{L1p8}
\begin{array}{ll}
&-\nabla \ell\times z\cdot\nabla\times z
=-\nabla\cdot (z\times(\nabla \ell\times z))
-z\cdot\nabla\times(\nabla\ell\times z)\\\ns
&=-\nabla\cdot (z\times(\nabla \ell\times z))
-z\cdot[(\nabla\cdot z)\nabla\ell-(\nabla\cdot\nabla\ell)z+(z\cdot\nabla)\nabla\ell-(\nabla\ell\cdot\nabla)z].
\end{array}
\end{equation}
%By a simple calculation, 
Since ${\rm div} z=\nabla\cdot z=0$ in $Q$, we know
\begin{equation}\label{L1p882}
\begin{array}{ll}
&\ds z\cdot(z\cdot\nabla)\nabla\ell
=z\cdot\Big[\Big(z_1\frac{\partial}{\partial x_1}+z_2\frac{\partial}{\partial x_2}+z_3\frac{\partial}{\partial x_3}\Big)(\ell_{x_1}, \ell_{x_2}, \ell_{x_3})\Big]
\\\ns\ds
&=z_1(z_1\ell_{x_1x_1}+z_2\ell_{x_1x_2}+z_3\ell_{x_1x_3})+z_2(z_1\ell_{x_2x_1}+z_2\ell_{x_2x_2}\\\ns
&\quad 
+z_3\ell_{x_2x_3})+z_3(z_1\ell_{x_3x_1}+z_2\ell_{x_3x_2}+z_3\ell_{x_3x_3})\\\ns
&=z_1^2\ell_{x_1x_1}+z_2^2\ell_{x_2x_2}+z_3^2\ell_{x_3x_3}+2z_1z_2\ell_{x_1x_2}+2z_1z_3\ell_{x_1x_3}+2z_2z_3\ell_{x_2x_3},
\end{array}
\end{equation}
and
\begin{equation}\label{L1p883}
\begin{array}{ll}
&\ds z\cdot((\nabla\ell\cdot\nabla)z)
=z\cdot\Big[\Big(\ell_{x_1}\frac{\partial}{\partial x_1}+\ell_{x_2}\frac{\partial}{\partial x_2}+\ell_{x_3}\frac{\partial}{\partial x_3}\Big)(z_1, z_2, z_3)\Big]
\\\ns\ds
&=z_1(\ell_{x_1}z_{1,x_1}+\ell_{x_2}z_{1,x_2}+\ell_{x_3}z_{1,x_3})
+z_2(\ell_{x_1}z_{2,x_1}+\ell_{x_2}z_{2,x_2}+\ell_{x_3}z_{2,x_3})\\\ns
&\q+z_3(\ell_{x_1}z_{3,x_1}+\ell_{x_2}z_{3,x_2}+\ell_{x_3}z_{3,x_3})\\\ns
&\ds=\frac{1}{2}(\ell_{x_1}z_1^2+\ell_{x_1}z_2^2+\ell_{x_1}z_3^2)_{x_1}-\frac{1}{2}\ell_{x_1x_1}(z_1^2+z_2^2+z_3^2)
+\frac{1}{2}(\ell_{x_2}z_1^2+\ell_{x_2}z_2^2+\ell_{x_2}z_3^2)_{x_2}\\\ns
&\ds\q-\frac{1}{2}\ell_{x_2x_2}(z_1^2+z_2^2+z_3^2)
+\frac{1}{2}(\ell_{x_3}z_1^2+\ell_{x_3}z_2^2+\ell_{x_3}z_3^2)_{x_3}-\frac{1}{2}\ell_{x_3x_3}(z_1^2+z_2^2+z_3^2)\\\ns
&\ds=\frac{1}{2}\nabla\cdot(\nabla\ell |z|^2)-\frac{1}{2}\Delta\ell|z|^2.
\end{array}
\end{equation}
%Then, %by $(\ref{L1p8})$-$(\ref{L1p883})$, one can see that
It follows from $(\ref{L1p8})$-$(\ref{L1p883})$ that
\begin{equation}\label{L1p884}
\begin{aligned}
\ds-\nabla \ell\times z\cdot\nabla\times z
&=-\nabla\cdot (z\times(\nabla \ell\times z))
+\frac{1}{2}\Delta\ell|z|^2
+\frac{1}{2}\nabla\cdot(\nabla\ell |z|^2)\\\ns
&\quad-(z_1^2\ell_{x_1x_1}+z_2^2\ell_{x_2x_2}+z_3^2\ell_{x_3x_3})
-2(z_1z_2\ell_{x_1x_2}+z_1z_3\ell_{x_1x_3}+z_2z_3\ell_{x_2x_3}).
\end{aligned}
\end{equation}
%And, 
In addition, by the divergence condition ${\rm div}y=0$ in $Q$, we %can 
get 
\begin{equation}\label{L1p81}
\begin{array}{ll}
&\ds-\nabla\times y\cdot\nabla\ell\times y
=-\frac{1}{2}\nabla\ell\cdot\nabla(|y|^2)+\nabla\ell\cdot(y\cdot\nabla)y\\\ns\ds
&\ds=-\frac{1}{2}\nabla\cdot(\nabla\ell |y|^2)+\frac{1}{2}\Delta\ell|y|^2
+\nabla\cdot(y(\nabla\ell\cdot y))\\\ns
&\q-\ell_{x_1x_1}y_1^2-\ell_{x_2x_2}y_2^2-\ell_{x_3x_3}y_3^2-2(y_1y_2\ell_{x_1x_2}+y_1y_3\ell_{x_1x_3}+y_2y_3\ell_{x_2x_3}),
\end{array}
\end{equation}
where %the following facts were used here
we have used the following formulas
\begin{eqnarray*}
\begin{array}{ll}
&\ds(\nabla\times y) \cdot(\nabla\ell\times y)\\\ns
&\ds=(y_{3,x_2}-y_{2,x_3})(\ell_{x_2} y_3-\ell_{x_3} y_2)
+(y_{1,x_3}-y_{3,x_1})(\ell_{x_3} y_1-\ell_{x_1} y_3)+(y_{2,x_1}-y_{1,x_2})(\ell_{x_1} y_2-\ell_{x_2} y_1)\\\ns
&\ds=\ell_{x_2} y_{3,x_2}y_3-\ell_{x_2} y_{2,x_3}y_3-\ell_{x_3} y_{3,x_2}y_2+\ell_{x_3} y_{2,x_3}y_2\\\ns
&\ds\q+\ell_{x_3} y_{1,x_3}y_1-\ell_{x_3} y_{3,x_1}y_1-\ell_{x_1}y_{1,x_3}y_3+\ell_{x_1} y_{3,x_1}y_3\\\ns
&\ds\q+\ell_{x_1} y_{2,x_1}y_2-\ell_{x_1} y_{1,x_2}y_2-\ell_{x_2} y_{2,x_1}y_1+\ell_{x_2} y_{1,x_2}y_1\\\ns
&\ds\q+\ell_{x_1} y_{1,x_1}y_1+\ell_{x_2} y_{2,x_2}y_2+\ell_{x_3} y_{3,x_3}y_3-\ell_{x_1} y_{1,x_1}y_1-\ell_{x_2} y_{2,x_2}y_2-\ell_{x_3} y_{3,x_3}y_3\\\ns
&\ds=\ell_{x_1}(y_{1,x_1}y_1+y_{2,x_1}y_2+y_{3,x_1}y_3)+\ell_{x_2}(y_{1,x_2}y_1+y_{2,x_2}y_2
+y_{3,x_2}y_3)\\\ns
&\ds\q+\ell_{x_3}(y_{1,x_3}y_1+y_{2,x_3}y_2+y_{3,x_3}y_3)-\ell_{x_2}y_{2,x_3}y_3-\ell_{x_3}y_{3,x_2}y_2-\ell_{x_3}y_{3,x_1}y_1\\\ns
&\ds\q-\ell_{x_1}y_{1,x_3}y_3-\ell_{x_1}y_{1,x_2}y_2-\ell_{x_2}y_{2,x_1}y_1-\ell_{x_1}y_{1,x_1}y_1-\ell_{x_2}y_{2,x_2}y_2-\ell_{x_3}y_{3,x_3}y_3\\\ns
&\ds=\frac{1}{2}\nabla\ell\cdot\nabla(|y|^2)-\nabla\ell\cdot(y\cdot\nabla)y,
\end{array}
\end{eqnarray*}
and
\begin{eqnarray*}
\begin{array}{ll}
&\ds\nabla\ell\cdot(y\cdot\nabla)y=(\ell_{x_1},\ell_{x_2},\ell_{x_3})\cdot\Big(y_1\frac{\partial}{\partial x_1}+y_2\frac{\partial}{\partial x_2}+y_3\frac{\partial}{\partial x_3}\Big)(y_1,y_2,y_3)\\\ns
&=\ell_{x_1}(y_1y_{1,x_1}+y_2y_{1,x_2}+y_3y_{1,x_3})
+\ell_{x_2}(y_1y_{2,x_1}+y_2y_{2,x_2}+y_3y_{2,x_3})\\\ns
&\q+\ell_{x_3}(y_1y_{3,x_1}+y_2y_{3,x_2}+y_3y_{3,x_3})\\\ns
&=(y_1(\ell_{x_1}y_1+\ell_{x_2}y_2+\ell_{x_3}y_3))_{x_1}+(y_2(\ell_{x_1}y_1+\ell_{x_2}y_2+\ell_{x_3}y_3))_{x_2}+(y_3(\ell_{x_1}y_1+\ell_{x_2}y_2+\ell_{x_3}y_3))_{x_3}\\\ns
&\q-\ell_{x_1x_1}y_1^2-\ell_{x_1x_2}y_1y_2-\ell_{x_1x_3}y_1y_3-\ell_{x_1}y_1(y_{1,x_1}+y_{2,x_2}+y_{3,x_3})\\\ns
&\q-\ell_{x_2x_1}y_1y_2-\ell_{x_2x_2}y_2^2-\ell_{x_2x_3}y_2y_3-\ell_{x_2}y_2(y_{1,x_1}+y_{2,x_2}+y_{3,x_3})\\\ns
&\q-\ell_{x_3x_1}y_1y_3-\ell_{x_3x_2}y_2y_3-\ell_{x_3x_3}y_3^2-\ell_{x_3}y_3(y_{1,x_1}+y_{2,x_2}+y_{3,x_3})\\\ns
&=\nabla\cdot(y(y\cdot\nabla\ell))-(\nabla\ell\cdot y)(\nabla\cdot y)
-\ell_{x_1x_1}y_1^2-\ell_{x_2x_2}y_2^2-\ell_{x_3x_3}y_3^2\\\ns
&\q-2\ell_{x_1x_2}y_1y_2-2\ell_{x_1x_3}y_1y_3-2\ell_{x_3x_2}y_2y_3.
\end{array}
\end{eqnarray*}
Finally, together
\begin{equation*}\label{L1p9}
\begin{array}{ll}
\ell_t(y\cdot\nabla\times z-z\cdot\nabla\times y)
=\nabla\cdot(z\times \ell_t y),
\end{array}
\end{equation*}
% Together 
with the above equality with $(\ref{L1p5})$-$(\ref{L1p7})$ and $(\ref{L1p884})$-$(\ref{L1p81})$ gives the desired identity $(\ref{L1p0})$.
\hfill$\Box$

\begin{remark}
A commonly used method for studying the controllability of stochastic partial differential equations is to establish Carleman-type estimates. 
%{\color{blue} For instance,  the exact controllability of the stochastic wave equation and stochastic beam equation are investigated in {\rm \cite{LL, yz}}.}
However, when an exponential weight function is introduced and a change of variables is performed for the solution of the stochastic Maxwell equations, the divergence-free condition of the equation is no longer preserved. As a result, some terms appearing in the weighted identity cannot be eliminated. For this reason, we adopt the multiplier method here to establish a pointwise identity.
\end{remark}

%\ms

\section{Proof of the observability estimate}
This section is devoted to proving the observability estimate $(\ref{T1011})$ for the backward stochastic Maxwell equation $(\ref{20a})$ by the pointwise identity and the energy estimates. 
%Before giving the proof of Theorem $\ref{T101}$, we introduce the following two lemmas to analyze the boundary term and the time term.
First, we present the following energy estimates.
\begin{lemma}\label{L2}
For any solution 
$(y, z)$ to $(\ref{20a})$, and for all $s, t$ satisfying $0\leq s\leq t\leq T$, it holds that
\begin{eqnarray}\label{L21}
\begin{array}{ll}
&\ds\mathbb{E}\int_G(|y(x,t)|^2+|z(x,t)|^2){\rm d}x\\\ns
&\ds=\mathbb{E}\int_G(|y(x,s)|^2+|z(x,s)|^2){\rm d}x
+\frac{1}{2}\mathbb{E}\int_s^t\int_G\big(|Y(x,\tau)|^2+|Z(x,\tau)|^2\big){\rm d}x{\rm d}\tau,
\end{array}
\end{eqnarray}
and
\begin{eqnarray}\label{L211}
\begin{array}{ll}
\ds \mathbb{E}\int_G(|y(x,s)|^2+|z(x,s)|^2){\rm d}x
\leq \mathbb{E}\int_G(|y(x,t)|^2+|z(x,t)|^2){\rm d}x.
\end{array}
\end{eqnarray}
\end{lemma}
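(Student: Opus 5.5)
The plan is to apply Itô's formula to the energy $|y|^2+|z|^2$, integrate over $G$, and show that the transport term $y\cdot{\rm curl}\,z-z\cdot{\rm curl}\,y$ is an exact divergence whose boundary flux vanishes under the conditions in $(\ref{20a})$. Taking expectations then removes the martingale part, leaving only the quadratic-variation contribution of $Y$ and $Z$. Since the solution in Definition \ref{D1} is only a weak (transposition-type) solution with values in the dual spaces $\hat J_2,\hat J_1$, I will first prove the identity for regular solutions and then pass to the limit by density.

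\emph{Step 1 (regular data).} Take terminal data $(y_T,z_T)$ in a dense subspace of smooth fields, e.g.\ $L^2_{\mathcal F_T}(\Omega;J_2^1\times J_1^1)$ or the domain of the Maxwell operator. For such data the backward equation, viewed as a backward stochastic evolution equation driven by the skew-adjoint Maxwell operator
\begin{equation*}
\mathcal A(y,z)=({\rm curl}\,z,\,-{\rm curl}\,y),
\end{equation*}
has a strong solution, so Itô's formula in the Hilbert space $(L^2(G))^3\times(L^2(G))^3$ applies. Pointwise in $x$ I expect
\begin{equation*}
{\rm d}\big(|y|^2+|z|^2\big)=2\big(y\cdot{\rm curl}\,z-z\cdot{\rm curl}\,y\big){\rm d}t+2\big(y\cdot Y+z\cdot Z\big){\rm d}W+\big(|Y|^2+|Z|^2\big){\rm d}t .
\end{equation*}

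\emph{Step 2 (the divergence term).} I will use the identity $y\cdot{\rm curl}\,z-z\cdot{\rm curl}\,y=\nabla\cdot(z\times y)$, which is the same one used at the end of the proof of Lemma \ref{221L1}. By the divergence theorem,
\begin{equation*}
\int_G\nabla\cdot(z\times y)\,{\rm d}x=\int_\Gamma (z\times y)\cdot\nu\,{\rm d}\Gamma=\int_\Gamma y\cdot(\nu\times z)\,{\rm d}\Gamma ,
\end{equation*}
and this vanishes because $z\times\nu=0$ on $\Gamma$. Consequently the drift reduces exactly to $|Y|^2+|Z|^2$; this is the skew-adjointness of $\mathcal A$ under the boundary conditions of $(\ref{20a})$.

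\emph{Step 3 (integrate and take expectations).} Integrating over $G\times(s,t)$ and taking expectations kills the stochastic integral, since it is a genuine martingale for $Y,Z\in L^2_{\mathbb F}$ and $y,z$ continuous in $L^2$. This gives $(\ref{L21})$. A direct Itô computation produces the coefficient $1$ in front of $\mathbb E\int_s^t\int_G(|Y|^2+|Z|^2)$; the constant displayed in $(\ref{L21})$ should be read with the normalization of the paper, and only its positivity matters later. Inequality $(\ref{L211})$ then follows immediately because the added term is nonnegative.

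\emph{Step 4 (density).} For general data, approximate $(y_T,z_T)$ by regular data. By the continuous dependence estimate $(\ref{l11})$ of Lemma \ref{l1}, the corresponding solutions converge in the relevant $C_{\mathbb F}$ and $L^2_{\mathbb F}$ norms, so both sides of $(\ref{L21})$ pass to the limit. This last step is the main obstacle. Lemma \ref{l1} is formulated in $C_{\mathbb F}([0,\tau];L^2(\Omega;\hat J_2))$ and its $\hat J_1$ counterpart, whereas $(\ref{L21})$ involves $L^2$ norms. I would therefore first check that the solution map is also continuous from $L^2$ terminal data into $C_{\mathbb F}([0,T];L^2(\Omega;(L^2(G))^6))\times L^2_{\mathbb F}$. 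This should follow from the same energy identity applied to differences of regular solutions, which makes the argument self-consistent via a Cauchy-sequence argument.
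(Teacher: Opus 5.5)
Your proposal is correct and follows the paper's proof: Itô's formula for $|y|^2+|z|^2$, the identity $y\cdot{\rm curl}\,z-z\cdot{\rm curl}\,y=\nabla\cdot(z\times y)$, and the vanishing flux $\int_\Gamma y\cdot(\nu\times z)\,{\rm d}\Gamma=0$ from $z\times\nu=0$; your Step 4 (energy identity for differences of regular solutions, then identifying the limit through the uniqueness in Lemma \ref{l1}) is a sound rigor supplement that the paper omits. You are also right about the constant: the paper sets $\mathcal{E}(t)=\frac{1}{2}\mathbb{E}\int_G(|y|^2+|z|^2)\,{\rm d}x$ and obtains $\mathcal{E}(t)-\mathcal{E}(s)=\frac{1}{2}\mathbb{E}\int_s^t\int_G(|Y|^2+|Z|^2)\,{\rm d}x\,{\rm d}\tau$, which gives coefficient $1$ in $(\ref{L21})$ for the unnormalized energies, so the displayed $\frac{1}{2}$ is a slip that does not affect $(\ref{L211})$ or the later use of the lemma.
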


\noindent{\bf Proof. } 
Let
\begin{eqnarray*}\label{L20}
\begin{array}{ll}
\ds\mathcal{E}(t)=\frac{1}{2}\mathbb{E}\int_G(|y(x,t)|^2+|z(x,t)|^2)\mbox{d}x.
\end{array}
\end{eqnarray*}
By It\^{o}’s formula, we get that 
\begin{eqnarray}\label{L22}
\begin{array}{ll}
& \ds\mathcal{E}(t)-\mathcal{E}(s)
=%\int_s^t \frac{\mbox{d}}{\mbox{d}\tau}\mathcal{E}(\tau)\mbox{d}\tau= 
\mathbb{E}\int_s^t\int_G\big[y\cdot({\rm curl}~z)
+\frac{1}{2}|Y|^2+z\cdot(-{\rm curl}~y)
+\frac{1}{2}|Z|^2\big]\mbox{d}x\mbox{d}\tau.
\end{array}
\end{eqnarray}
Since $\nu\times z=0$ on $\Gamma\times(0,T)$, one can obtain that
\begin{eqnarray}\label{L24}
\begin{array}{ll}
&\ds\mathbb{E}\int_s^t\int_G[y\cdot\nabla\times z-z\cdot\nabla\times y]\mbox{d}x\mbox{d}\tau
=\mathbb{E}\int_s^t\int_G\nabla\cdot (z\times y)\mbox{d}x\mbox{d}\tau\\\ns
&\ds=\mathbb{E}\int_s^t\int_\Gamma\nu\cdot z\times y\mbox{d}\Gamma\mbox{d}\tau
=\mathbb{E}\int_s^t\int_\Gamma y\cdot(\nu\times z)\mbox{d}\Gamma\mbox{d}\tau=0.
\end{array}
\end{eqnarray}
Combining $(\ref{L22})$ with $(\ref{L24})$, 
\begin{eqnarray*}
\begin{array}{ll}
\ds\mathcal{E}(t)-\mathcal{E}(s)
=\frac{1}{2}\mathbb{E}\int_s^t\int_G(|Y|^2+|Z|^2)\mbox{d}x\mbox{d}\tau,
\end{array}
\end{eqnarray*}
then, one can obtain that  $(\ref{L21})$ and $(\ref{L211}).$
\hfill$\Box$

\ms

By setting 
%Let 
$$
\beta(t)=\frac{1}{2}\alpha\lambda^2e^{\alpha\lambda}-\Big(2c_1^2\lambda^2\Big(t-\frac{T}{2}\Big)^2
+c_1\lambda+\frac{1}{2M}\Big|t-\frac{T}{2}\Big|c_1\lambda\Big) e^{c_1\lambda(t-\frac{T}{2})^2},
$$
and 
$$
\gamma(t)=2M^2\alpha\lambda^2e^{\alpha\lambda}
+2Mc_1\lambda\Big|t-\frac{T}{2}\Big|e^{c_1\lambda(t-\frac{T}{2})^2},
$$
and taking advantage of 
%From 
the piontwise identity $(\ref{L1p0})$, we arrive at %have 
the following  estimate.
\begin{theorem}\label{th1}
    There exist constants $C>0$ such that for all $\lambda>1$, 
    \begin{eqnarray*}\label{th01}
    (y,z)\in C_{\mathbb{F}}([0,T];L^2(\Omega; \hat{J}_2^1))\times C_{\mathbb{F}}([0,T];L^2(\Omega; \hat{J}_1^1)), 
    \end{eqnarray*}
and $ f_1,f_2,g_1,g_2\in  L^2_{\mathbb{F}}(0,T;(L^2(G))^3)$ satisfying
\begin{eqnarray*}\label{th03}
\left\{
\begin{array}{ll}
{\rm d}y-{\rm curl}~z{\rm d}t
=f_1{\rm d}t+g_1{\rm d}W(t) &\mbox{ in }Q,\\\ns\ds
{\rm d}z+{\rm curl}~y{\rm d}t=f_2{\rm d}t+g_2{\rm d}W(t) &\mbox{ in }Q,\\\ns\ds
{\rm div} y={\rm div} z=0 &\mbox{ in }Q,\\\ns\ds
y\cdot\nu=0 &\mbox{ on }\Gamma\times(0,T),\\\ns\ds
z\times\nu=0 &\mbox{ on }\Gamma\times(0,T),
\end{array}
\right.
\end{eqnarray*}
and 
 \begin{eqnarray}\label{th04}
    y(x, 0)=y(x,T)=z(x,0)=z(x, T)=0\q  \mbox{ in }\ G,\q \mathbb{P}\mbox{-}a.s.,
    \end{eqnarray}
it holds that
\begin{eqnarray*}\label{th05}
\begin{array}{ll}
&\ds\mathbb{E}\int_{Q} \beta \big(|y|^2+|z|^2\big){\rm d}x {\rm d}t\\&\ds
\leq \mathbb{E}\int_Q\gamma\big(|f_1|^2+|f_2|^2\big){\rm d}x{\rm d}t
+C\mathbb{E}\int_Q\alpha\lambda^2e^{\alpha\lambda^2}\big(|g_1|^2+|g_2|^2\big){\rm d}x{\rm d}t\\\ns&\ds
\q+C\mathbb{E}\int_0^T\int_{\Gamma} \alpha\lambda^2e^{\alpha\lambda^2}|y|^2{\rm d}\Gamma{\rm d}t.
\end{array}
\end{eqnarray*}
\end{theorem}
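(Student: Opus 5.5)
The plan is to integrate the pointwise identity (\ref{L1p0}) of Lemma \ref{221L1} over $Q$, take expectation, and substitute the equations ${\rm d}y-{\rm curl}\,z\,{\rm d}t=f_1{\rm d}t+g_1{\rm d}W$ and ${\rm d}z+{\rm curl}\,y\,{\rm d}t=f_2{\rm d}t+g_2{\rm d}W$ into the left-hand side. The martingale parts $g_i\,{\rm d}W$ vanish in expectation. What remains on the left is
\[
\mathbb{E}\int_Q\big[(\nabla\ell\times z-\ell_t y)\cdot f_1+(-\nabla\ell\times y-\ell_t z)\cdot f_2\big]{\rm d}x{\rm d}t .
\]
On the right-hand side, the exact differentials ${\rm d}(\ell_t|y|^2+\ell_t|z|^2)$ and ${\rm d}(\nabla\ell\cdot z\times y)$ integrate to zero in time because of (\ref{th04}). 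The quadratic variation terms become $\frac12\ell_t(|g_1|^2+|g_2|^2)$ and $-\nabla\ell\cdot g_2\times g_1$. Both are bounded by $C\alpha\lambda^2e^{\alpha\lambda}(|g_1|^2+|g_2|^2)$, since $|\ell_t|$ and $|\nabla\ell|$ are dominated by $C\alpha\lambda^2e^{\alpha\lambda}$ on $Q$ by the choice of $\alpha$; here one uses $c_1\lambda T^2/4\le\alpha\lambda$ to control $e^{c_1\lambda(t-T/2)^2}$.

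Next I would compute the bulk terms for $\ell=\alpha\lambda^2e^{\alpha\lambda}|x|^2-e^{c_1\lambda(t-T/2)^2}$. The Hessian of $\ell$ is $2\alpha\lambda^2e^{\alpha\lambda}I$, so $K=-2\alpha\lambda^2e^{\alpha\lambda}(|y|^2+|z|^2)$. Also $\Delta\ell=6\alpha\lambda^2e^{\alpha\lambda}$, and
\[
\ell_{tt}=-(2c_1\lambda+4c_1^2\lambda^2(t-T/2)^2)e^{c_1\lambda(t-T/2)^2}.
\]
Hence
\[
K+\tfrac12(\ell_{tt}+\Delta\ell)(|y|^2+|z|^2)=\Big[\alpha\lambda^2e^{\alpha\lambda}-\big(c_1\lambda+2c_1^2\lambda^2(t-T/2)^2\big)e^{c_1\lambda(t-T/2)^2}\Big](|y|^2+|z|^2),
\]
which is essentially $2\beta$ before the cross terms are absorbed.

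The $f$-terms are handled with Cauchy--Schwarz. We have $|\nabla\ell|\le 2M\alpha\lambda^2e^{\alpha\lambda}$ and $|\ell_t|=2c_1\lambda|t-T/2|e^{c_1\lambda(t-T/2)^2}$. Splitting $|a\cdot f|\le \frac{1}{2}\epsilon|a|^2/\ldots+\ldots$ with weights chosen so that the $f$-coefficient equals $\gamma$ and the $(|y|^2+|z|^2)$-coefficient is at most half of the bulk weight, I obtain $\beta$ as defined, including the term $\frac{1}{2M}|t-T/2|c_1\lambda e^{\cdots}$. This is presumably why $\gamma$ has exactly the factors $2M^2$ and $2M$.

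Then the divergence terms are converted to boundary integrals by the divergence theorem. The term $\nabla\cdot(z\times\ell_t y)$ gives $\ell_t\,y\cdot(\nu\times z)=0$. The term ${\rm div}\,V$ gives
\[
\int_\Gamma\Big[(\nabla\ell\cdot y)(y\cdot\nu)-\nu\cdot z\times(\nabla\ell\times z)-\tfrac12\nabla\ell\cdot\nu|y|^2+\tfrac12\nabla\ell\cdot\nu|z|^2\Big].
\]
Since $y\cdot\nu=0$, the first term vanishes. Since $z\times\nu=0$, we have $z=(z\cdot\nu)\nu$ on $\Gamma$, so
\[
z\times(\nabla\ell\times z)=\nabla\ell|z|^2-z(z\cdot\nabla\ell),
\qquad
\nu\cdot z\times(\nabla\ell\times z)=(\nabla\ell\cdot\nu)|z|^2-(\nabla\ell\cdot\nu)|z|^2=0.
\]
What remains is $\frac12(\nabla\ell\cdot\nu)|z|^2-\frac12(\nabla\ell\cdot\nu)|y|^2$, with $\nabla\ell\cdot\nu=2\alpha\lambda^2e^{\alpha\lambda}x\cdot\nu\ge0$ by (\ref{3a1}). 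The $|y|^2$ term is then bounded by $C\alpha\lambda^2e^{\alpha\lambda}\int_\Gamma|y|^2$.

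The main obstacle is the boundary $|z|^2$ term, which has the wrong sign and is not observed. I expect to handle it via the sign: after moving everything to one side, $\frac12\nabla\ell\cdot\nu|z|^2$ appears with the favorable sign, or it can be dropped because $x\cdot\nu\ge0$. I would carefully track the orientation in (\ref{L1p0}); the left side involves $\mathbb{E}\int_Q K+\ldots$ plus the boundary terms. If the sign turns out unfavorable, I would instead use the tangential structure: on $\Gamma$, $z$ is normal, and the equation ${\rm d}z\cdot\nu=-({\rm curl}\,y\cdot\nu)\,{\rm d}t+\ldots$ involves only tangential derivatives of $y$, which could be traded for $y|_\Gamma$. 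The final bookkeeping of constants against the lower bounds on $\alpha$ is routine.
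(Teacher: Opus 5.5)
Your proposal is correct and follows the paper's proof essentially step by step. You integrate (\ref{L1p0}) over $Q$, drop the exact differentials using (\ref{th04}), compute $K+\frac12(\ell_{tt}+\Delta\ell)(|y|^2+|z|^2)$ explicitly, bound the $f$-terms by Cauchy--Schwarz with weights $\frac{1}{4M}$ and $M$ to produce $\beta$ and $\gamma$, bound the It\^o correction terms by $C\alpha\lambda^2e^{\alpha\lambda}(|g_1|^2+|g_2|^2)$, and use $y\cdot\nu=0$, $z\times\nu=0$ to reduce all boundary contributions to $\frac12\nabla\ell\cdot\nu\,(|z|^2-|y|^2)$.

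The boundary $|z|^2$ term does have the favorable sign. It sits on the same side as the positive bulk weight, and $\nabla\ell\cdot\nu=2\alpha\lambda^2e^{\alpha\lambda}\,x\cdot\nu\ge0$, so the paper simply drops it, exactly as in (\ref{L14}). Your fallback via the tangential structure is therefore never needed.
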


\noindent{\bf Proof.}
Recalling the definition of $\ell$ in Section \ref{sec;3}, one can see that
\begin{eqnarray*}
\begin{array}{ll}
&\ds\ell_{t}=-2c_1\lambda\Big(t-\frac{T}{2}\Big)e^{c_1\lambda(t-\frac{T}{2})^2}, \ \
\ell_{tt}=-\Big[4c_1^2\lambda^2\Big(t-\frac{T}{2}\Big)^2+2c_1\lambda\Big]e^{c_1\lambda(t-\frac{T}{2})^2},\ \
\nabla\ell=2\alpha\lambda^2e^{\alpha\lambda} x,\q  \\\ns&\ds
\Delta\ell=6\alpha\lambda^2e^{\alpha\lambda},\q
\ell_{x_1x_1}=\ell_{x_2x_2}=\ell_{x_3x_3}=2\alpha\lambda^2e^{\alpha\lambda},\q
\ell_{x_1x_2}=\ell_{x_1x_3}=\ell_{x_2x_3}=0.
\end{array}
\end{eqnarray*}
From (\ref{th04}), by integrating (\ref{L1p0}) in $Q$ and taking expectation,  we have
\begin{eqnarray}\label{pT11}
\begin{array}{ll}
&\ds\mathbb{E}\int_Q\big[(\nabla\ell\times z-\ell_t y)\cdot\big(\mbox{d}y-{\rm curl}~z\mbox{d}t\big)
+(-\nabla\ell\times y-\ell_t z)\cdot\big(\mbox{d}z+{\rm curl}~y\mbox{d}t\big)\big]\mbox{d}x\\\ns\ds
&\ds=\mathbb{E}\int_Q{\rm div}V\mbox{d}x\mbox{d}t
+\mathbb{E}\int_Q [K+\nabla\cdot (z\times \ell_t y)]\mbox{d}x\mbox{d}t
+\frac{1}{2}\mathbb{E}\int_Q(\ell_{tt}
+\Delta\ell)(|y|^2+|z|^2)\mbox{d}x\mbox{d}t\\\ns\ds
&\q\ds+\mathbb{E}\int_Q\Big[\frac{1}{2}\ell_t\big(\big|\mbox{d}y\big|^2
+\big|\mbox{d}z\big|^2\big)-\nabla\ell\cdot \mbox{d}z\times \mbox{d}y\Big]\mbox{d}x.
\end{array}
\end{eqnarray}
Now, we evaluate the right-hand side of equality $(\ref{pT11})$ term by term. %For the first one, f
From $y\cdot \nu=0$ and $z\times\nu=0$ on $\Gamma\times(0,T)$, it follows that
\begin{eqnarray}\label{L13}
\begin{array}{ll}
&\ds\mathbb{E}\int_Q\nabla\cdot[y(\nabla\ell\cdot y)-z\times(\nabla\ell\times  z)]\mbox{d}x\mbox{d}t\\\ns
&\ds=\mathbb{E}\int_0^T\int_\Gamma [\nu\cdot y(\nabla\ell\cdot y)
-\nu\cdot z\times(\nabla\ell\times z)]\mbox{d}\Gamma\mbox{d}t\\\ns
&\ds=\mathbb{E}\int_0^T\int_\Gamma [(\nu\cdot y)(\nabla\ell\cdot y)
-(\nabla\ell\times z)\cdot(\nu\times z)]\mbox{d}\Gamma\mbox{d}t=0,
\end{array}
\end{eqnarray}
and
\begin{eqnarray}\label{L14}
\begin{array}{ll}
&\ds\mathbb{E}\int_Q\nabla\cdot\Big(-\frac{1}{2}\nabla\ell |y|^2
+\frac{1}{2}\nabla\ell |z|^2\Big)\mbox{d}x\mbox{d}t
=\mathbb{E}\int_0^T\int_\Gamma \alpha\lambda^2e^{\alpha\lambda^2}(\nu\cdot x) (-|y|^2+|z|^2)\mbox{d}\Gamma\mbox{d}t\\\ns
&\ds\geq -C\mathbb{E}\int_0^T\int_{\Gamma} \alpha\lambda^2e^{\alpha\lambda} |y|^2\mbox{d}\Gamma\mbox{d}t.
\end{array}
\end{eqnarray}
Regarding  the second term on the right hand side of \eqref{pT11},
\begin{eqnarray}\label{L151}
\begin{array}{ll}
&\ds\mathbb{E}\int_Q [K+\nabla\cdot (z\times \ell_t y)]\mbox{d}x\mbox{d}t\\\ns
&\ds=-\mathbb{E}\int_Q 2\alpha\lambda^2e^{\alpha\lambda}(|y|^2+|z|^2)\mbox{d}x\mbox{d}t+\mathbb{E}\int_0^T\int_\Gamma \nu\cdot z\times \ell_t y\mbox{d}x\mbox{d}t\\\ns
&\ds=-\mathbb{E}\int_Q 2\alpha\lambda^2e^{\alpha\lambda}(|y|^2+|z|^2)\mbox{d}x\mbox{d}t.
\end{array}
\end{eqnarray}
Turning to the third term,
\begin{eqnarray}\label{L152}
\begin{array}{ll}
&\ds\frac{1}{2}\mathbb{E}\int_Q(\ell_{tt}
+\Delta\ell)(|y|^2+|z|^2)\mbox{d}x\mbox{d}t\\\ns
&\ds=\mathbb{E}\int_Q \Big[3\alpha\lambda^2e^{\alpha\lambda}-\Big(2c_1^2\lambda^2\Big(t-\frac{T}{2}\Big)^2
+c_1\lambda\Big)e^{c_1\lambda(t-\frac{T}{2})^2}\Big](|y|^2+|z|^2)\mbox{d}x\mbox{d}t.
%&\ds\geq C\mathbb{E}\int_Q \alpha\lambda^2e^{\alpha\lambda^2}(|y|^2+|z|^2)\mbox{d}x\mbox{d}t.
\end{array}
\end{eqnarray}
For the fourth term, 
\begin{eqnarray}\label{L16}
\begin{array}{ll}
&\ds\frac{1}{2}\mathbb{E}\int_Q\ell_t\big(\big|\mbox{d}y\big|^2
+\big|\mbox{d}z\big|^2\big)\mbox{d}x
\!-\!\mathbb{E}\int_Q \nabla\ell\cdot \mbox{d}z\times \mbox{d}y\mbox{d}x\\\ns
&\ds=-\mathbb{E}\int_Q c_1\lambda \Big(t-\frac{T}{2}\Big)e^{c_1\lambda(t-\frac{T}{2})^2}\big(|g_1|^2+|g_2|^2\big)\mbox{d}x\mbox{d}t
\!-\!\mathbb{E}\int_Q 2\alpha\lambda^2e^{\alpha\lambda}x\cdot g_2\times g_1\mbox{d}x\mbox{d}t\\\ns
&\ds\geq-\mathbb{E}\int_Q c_1\lambda \Big(t-\frac{T}{2}\Big)e^{c_1\lambda(t-\frac{T}{2})^2}\big(|g_1|^2+|g_2|^2\big)\mbox{d}x\mbox{d}t
\!-\!\mathbb{E}\int_Q M\alpha\lambda^2e^{\alpha\lambda} \big(|g_1|^2+|g_2|^2\big) \mbox{d}x\mbox{d}t\\\ns
&\ds\geq-\mathbb{E}\int_Q (M+1)\alpha\lambda^2e^{\alpha\lambda}\big(|g_1|^2+|g_2|^2\big)\mbox{d}x\mbox{d}t.
\end{array}
\end{eqnarray}
For the left-hand side of  $(\ref{pT11})$,  since $a\cdot b\times c\leq |a||b\times c|\leq \frac{1}{2}|a|(|b|^2+|c|^2)$ and $|x|\leq M$, we %have
deduce
\begin{eqnarray}\label{L17}
\begin{array}{ll}
&\ds\mathbb{E}\int_Q\big[(\nabla\ell\times z-\ell_t y)\cdot \big(\mbox{d}y-{\rm curl}~z\mbox{d}t\big)
+(-\nabla\ell\times y-\ell_t z)\cdot\big(\mbox{d}z+{\rm curl}~y\mbox{d}t\big)\big]\mbox{d}x\\\ns
&\ds=\mathbb{E}\int_Q\big[(f_1\cdot\nabla\ell\times z-f_1\cdot\ell_t y)
+(-f_2\cdot\nabla\ell\times y-f_2\cdot\ell_t z)\big]\mbox{d}x\mbox{d}t\\\ns
&\ds\leq \frac{1}{4M}\mathbb{E}\int_Q(|\nabla\ell|+|\ell_t|)(|y|^2+|z|^2)\mbox{d}x\mbox{d}t
+M\mathbb{E}\int_Q(|\nabla\ell|+|\ell_t|)(|f_1|^2+|f_2|^2)\mbox{d}x\mbox{d}t\\\ns
&\ds\leq \mathbb{E}\int_Q\Big(\frac{1}{2}\alpha\lambda^2e^{\alpha\lambda}
+\frac{c_1\lambda}{2M}\Big|t-\frac{T}{2}\Big|e^{c_1\lambda(t-\frac{T}{2})^2}\Big)(|y|^2+|z|^2)\mbox{d}x\mbox{d}t\\\ns
&\ds\q +\mathbb{E}\int_Q\Big(2M|x|\alpha\lambda^2e^{\alpha\lambda}
+2Mc_1\lambda\Big|t-\frac{T}{2}\Big|e^{c_1\lambda(t-\frac{T}{2})^2}\Big)(|f_1|^2+|f_2|^2)\mbox{d}x\mbox{d}t.
\end{array}
\end{eqnarray}
From $(\ref{pT11})$-$(\ref{L17})$,  one %can 
obtain that
\begin{eqnarray*}\label{L18}
\begin{array}{ll}
&\ds\mathbb{E}\int_Q
\gamma(|f_1|^2+|f_2|^2)\mbox{d}x\mbox{d}t\\\ns
&\ds\geq \mathbb{E}\int_Q \Big[\frac{1}{2}\alpha\lambda^2e^{\alpha\lambda}-\Big(2c_1^2\lambda^2\Big(t-\frac{T}{2}\Big)^2
+c_1\lambda +\frac{c_1\lambda }{2M}\Big|t-\frac{T}{2}\Big|\Big)e^{c_1\lambda(t-\frac{T}{2})^2}\Big](|y|^2+|z|^2)\mbox{d}x\mbox{d}t\\\ns
&\ds\q-C\mathbb{E}\int_Q \alpha\lambda^2e^{\alpha\lambda}\big(|g_1|^2+|g_2|^2\big)\mbox{d}x\mbox{d}t
-C\mathbb{E}\int_0^T\int_{\Gamma} \alpha\lambda^2e^{\alpha\lambda}|y|^2\mbox{d}\Gamma\mbox{d}t,
\end{array}
\end{eqnarray*}
which implies the desired result.
\hfill$\Box$

\ms

For any $\lambda>1$, $T>0$, it can be found that
\begin{eqnarray}\label{3c1}
  \q  \beta_{\min}=\beta(0)=\beta(T)=\frac{1}{2}\alpha\lambda^2e^{\alpha\lambda}
    -\Big(\frac{c_1^2T^2\lambda^2}{2}+c_1\lambda+\frac{T}{4M}c_1\lambda\Big) e^{\frac{c_1\lambda T^2}{4}}\geq \frac{1}{3}\alpha\lambda^2e^{\alpha\lambda},
\end{eqnarray}
and  
\begin{eqnarray}\label{3c}
   \gamma_{\max}=\gamma(0)=\gamma(T)=2M^2\alpha\lambda^2e^{\alpha\lambda}
   +MTc_1\lambda e^{\frac{c_1\lambda T^2}{4}}\leq (2M^2+1)\alpha\lambda^2e^{\alpha\lambda}.
\end{eqnarray}

\ms

Now we construct a time-dependent cutoff function and derive estimates for its derivative.

\begin{lemma}
Let $0<aT<2aT<T-2aT<T-aT<T$. Then there exists a cut-off function
 $\chi(\cdot)\in C_0^\infty([0,T])$ such that
\begin{align}
\label{condition1}
& \chi(t)=0 \quad \text{for }\ t\in [0,aT)\cup (T-aT,T],\\[4pt]
\label{condition2}
& \chi(t)=1 \quad \text{for }\ t\in (2aT,T-2aT),
\end{align}
and
$$
|\partial_t \chi(t)|\le \frac{C}{aT}
\quad \text{for } t\in (aT,2aT)\cup (T-2aT,T-aT),
$$
where
$$
C=\frac{e^{-4}}
{\displaystyle\int_0^1 \exp\!\left(-\frac{1}{\tau(1-\tau)}\right)\,d\tau} \approx 2.6054.
$$
In particular, $C$ is a positive constant independent of $a$ and $T$.
\end{lemma}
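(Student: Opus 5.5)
The plan is to build $\chi$ from the standard smooth step function made out of the bump $\psi(\tau)=\exp\!\left(-\frac{1}{\tau(1-\tau)}\right)$, and then to read the derivative bound off the maximum of $\psi$.

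\textbf{Step 1: the bump and the step.} Define $\psi(\tau)=\exp\!\left(-\frac{1}{\tau(1-\tau)}\right)$ for $\tau\in(0,1)$ and $\psi(\tau)=0$ otherwise. The classical argument for $e^{-1/s}$ shows that $\psi\in C^\infty(\mathbb{R})$ and that all its derivatives vanish at $\tau=0$ and $\tau=1$. Indeed, each derivative on $(0,1)$ is a rational function of $\tau$ times $\psi$, and $\psi$ decays faster than any power as $\tau\to0^+$ or $\tau\to1^-$. Set $I=\int_0^1\psi(\tau)\,d\tau>0$ and
\[
H(s)=\frac{1}{I}\int_{-\infty}^{s}\psi(\tau)\,d\tau .
\]
Then $H\in C^\infty(\mathbb{R})$ and $0\le H\le 1$. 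Moreover $H=0$ on $(-\infty,0]$, $H=1$ on $[1,\infty)$, and $H'=\psi/I\ge0$.

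\textbf{Step 2: definition of $\chi$.} Set
\[
\chi(t)=H\!\left(\frac{t-aT}{aT}\right)H\!\left(\frac{T-aT-t}{aT}\right),\qquad t\in[0,T].
\]
The hypothesis $2aT<T-2aT$ means $a<1/4$. Consequently, on each of $[0,T/2]$ and $[T/2,T]$ at most one factor differs from $1$.
\begin{itemize}
\item On $[0,aT)$ the first factor vanishes, and on $(T-aT,T]$ the second factor vanishes. This gives \eqref{condition1}.
\item On $(2aT,T-2aT)$ both arguments exceed $1$, so both factors equal $1$. This gives \eqref{condition2}.
\end{itemize}
Smoothness follows from Step 1. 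Since $\chi$ vanishes on neighbourhoods of $0$ and $T$, its support is a compact subset of $(0,T)$, so $\chi\in C_0^\infty([0,T])$.

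\textbf{Step 3: the derivative bound.} On $(aT,2aT)$ the second factor is identically $1$, so the chain rule gives
\[
\partial_t\chi(t)=\frac{1}{aT\,I}\,\psi\!\left(\frac{t-aT}{aT}\right).
\]
Symmetrically, on $(T-2aT,T-aT)$,
\[
|\partial_t\chi(t)|=\frac{1}{aT\,I}\,\psi\!\left(\frac{T-aT-t}{aT}\right).
\]
On $(0,1)$ the product $\tau(1-\tau)$ attains its maximum $1/4$ at $\tau=1/2$. Hence $\psi$ is maximised there, with $\max\psi=e^{-4}$. Therefore
\[
|\partial_t\chi(t)|\le \frac{e^{-4}}{I}\cdot\frac{1}{aT},
\]
which is the claimed bound with $C=e^{-4}/I$. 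This constant depends on neither $a$ nor $T$.

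\textbf{Step 4: the numerical value.} The only genuinely delicate points are the $C^\infty$ regularity of $\psi$ at the endpoints, which is standard, and the numerical value $C\approx2.6054$. The latter would follow from a quadrature of $I\approx7.03\times10^{-3}$, which I would verify numerically rather than analytically.
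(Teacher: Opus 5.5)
Your proposal is correct and follows essentially the same route as the paper. The paper builds the same smooth step from $\eta(s)\eta(1-s)=\exp\!\left(-\frac{1}{s(1-s)}\right)$, which makes smoothness immediate as a product of smooth functions; it defines $\chi$ piecewise rather than as your product, and the two coincide because $a<\frac14$; and it reads off $|\partial_t\chi|\le \frac{e^{-4}}{I}\cdot\frac{1}{aT}$ from $s(1-s)\le\frac14$ exactly as you do, with your value $I\approx 7.03\times10^{-3}$ consistent with $C\approx 2.6054$.
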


\noindent{\bf Proof.}
Define 
$\eta(s)=
\begin{cases}
e^{-1/s}, & s>0,\\[4pt]
0, & s\le 0,
\end{cases}$ 
and set the smooth step function
\[
\theta(s)=
\begin{cases}
0, & s\le 0,\\[4pt]
\dfrac{\displaystyle\int_0^s \eta(\tau)\eta(1-\tau)\,d\tau}
{\displaystyle\int_0^1 \eta(\tau)\eta(1-\tau)\,d\tau},
& 0<s<1,\\[14pt]
1, & s\ge 1.
\end{cases}
\]
Then $\theta\in C^\infty(\mathbb R)$, $\theta(s)=0$ for $s\le 0$, and
$\theta(s)=1$ for $s\ge 1$. Moreover, for $0<s<1$,
$$
\theta'(s)
=\frac{\eta(s)\eta(1-s)}
{\displaystyle\int_0^1 \eta(\tau)\eta(1-\tau)\,d\tau}
=\frac{\exp\!\left(-\frac{1}{s}-\frac{1}{1-s}\right)}
{\displaystyle\int_0^1 \exp\!\left(-\frac{1}{\tau}-\frac{1}{1-\tau}\right)\,d\tau}
=\frac{\exp\!\left(-\frac{1}{s(1-s)}\right)}
{\displaystyle \int_0^1 \exp\!\left(-\frac{1}{\tau(1-\tau)}\right)\,d\tau}.
$$
As a result, for $s\in(0,1)$ one has $s(1-s)\le \frac14$, which yields %hence
$\exp\!\left(-\frac{1}{s(1-s)}\right)\le e^{-4},$ 
and thus
\[
|\theta'(s)|\le\frac{e^{-4}}
{\ds\int_0^1 \exp\!\left(-\frac{1}{\tau(1-\tau)}\right)\,d\tau}
=:C, \qquad s\in \mathbb R.
\]

Next we define
\[
\chi(t)=
\begin{cases}
0, & 0\leq t\le aT,\\[4pt]
\theta\!\left(\dfrac{t-aT}{aT}\right), & aT<t<2aT,\\[8pt]
1, & 2aT\le t\le T-2aT,\\[6pt]
\theta\!\left(\dfrac{T-aT-t}{aT}\right), & T-2aT<t<T-aT,\\[8pt]
0, & T-aT\le t\leq T.
\end{cases}
\]
Because $\theta$ is smooth and constant in neighborhoods of $0$ and $1$, it follows that
$\chi\in C_0^\infty([0,T])$. 
Moreover, it can be verified that \eqref{condition1} and \eqref{condition2} hold.
Now we turn to the estimate for the derivative.
For $t\in(aT,2aT)$, 
\begin{align*}
\partial_t \chi(t)
=\theta'\!\left(\frac{t-aT}{aT}\right)\frac{1}{aT}, 
\end{align*}
which yields
$|\partial_t\chi(t)|\le\frac{C}{aT}.$
Similarly, for $t\in(T-2aT,T-aT)$, 
\begin{align*}
\partial_t \chi(t)
=
\theta'\!\left(\frac{T-aT-t}{aT}\right)\left(-\frac{1}{aT}\right),
\end{align*}
and therefore $|\partial_t\chi(t)|
\le
\frac{C}{aT}.$
\hfill$\Box$

\ms

Below we give the proof of the observability estimate $(\ref{T1011})$ for the backward stochastic Maxwell equation $(\ref{20a})$.

\ms

\noindent{\bf Proof of Theorem \ref{T101}.} 
Choose some sufficiently small constant $\epsilon$ with $0<\epsilon<\frac{1}{2}$ and denote
$$
\mathcal{J}_1\triangleq [0, \epsilon T]\cup[T-\epsilon T, T],
\q \mathcal{J}_2\triangleq [2\epsilon T, T-2\epsilon T].
$$
Let $\chi\in C_0^\infty([0,T];[0,1])$ satisfy
\begin{eqnarray*}
\chi=
\left\{\!\!\!\!\!\!
\begin{array}{ll}
& 0\q \mbox{in}~ \mathcal{J}_1,\\\ns
& 1\q \mbox{in}~ \mathcal{J}_2,
\end{array}
\right.
\q \mbox{and} \q  |\chi_t|\leq \frac{C_1}{\epsilon T}\q \mbox{in}~ \mathcal{J}\triangleq(\epsilon T,2\epsilon T)\cup(T-2\epsilon T,T-\epsilon T),
\end{eqnarray*}
where the constant $C_1$ is independent of $\epsilon$, $T$, and here can be taken to be $3$.
Set $h = \chi y$ and $r = \chi z$, where \((y, z)\) satisfies \eqref{20a}. Then \((h, r)\) fulfills \(h(x,0) = h(x,T) = r(x,0) = r(x,T) = 0\) in \(G\) and solves
\begin{eqnarray*}\label{oa}
\left\{
\begin{array}{ll}
\mbox{d}h-\mbox{curl}~r\mbox{d}t
=\tilde{f}_1\mbox{d}t+\chi Y\mbox{d}W(t) &\mbox{ in }Q,\\\ns\ds
\mbox{d}r+\mbox{curl}~h\mbox{d}t=\tilde{f}_2\mbox{d}t+\chi Z\mbox{d}W(t) &\mbox{ in }Q,\\\ns\ds
\mbox{div} h=\mbox{div} r=0 &\mbox{ in }Q,\\\ns\ds
h\cdot\nu=0 &\mbox{ on }\Gamma\times(0,T),\\\ns\ds
r\times\nu=0 &\mbox{ on }\Gamma\times(0,T),
\end{array}
\right.
\end{eqnarray*}
where $\tilde{f}_1=\chi_t y$ and $\tilde{f}_2=\chi_t z$.
From Theorem $\ref{th1}$,  we have
\begin{eqnarray}\label{po01}
\begin{array}{ll}
&\ds\mathbb{E}\int_{Q} \beta \chi^2 \big(|y|^2+|z|^2\big)\mbox{d}x \mbox{d}t\\\ns
&\ds\leq \mathbb{E}\int_Q\gamma\chi_t^2\big(|y|^2+|z|^2
\big)\mbox{d}x\mbox{d}t
+C\mathbb{E}\int_Q \alpha\lambda^2e^{\alpha\lambda^2}\chi^2\big(|Y|^2+|Z|^2\big)
\mbox{d}x\mbox{d}t\\\ns&\ds
\q+C\mathbb{E}\int_0^T\int_{\Gamma} \alpha\lambda^2e^{\alpha\lambda^2}\chi^2 |y|^2\mbox{d}\Gamma\mbox{d}t.
\end{array}
\end{eqnarray}
\iffalse
There exists $\lambda_1\geq\lambda_0$, such that for all 
 $\lambda\geq\lambda_1$, it holds that 
\begin{eqnarray}\label{po02}
\begin{array}{ll}
&\ds\mathbb{E}\int_{Q} \beta\chi^2 \big(|y|^2+|z|^2\big)\mbox{d}x \mbox{d}t\\\ns
&\ds\leq C\mathbb{E}\int_Q\chi^2\big(|Y|^2+\lambda|Z|^2\big)\mbox{d}x\mbox{d}t
+C\mathbb{E}\int_Q\chi_t^2\gamma\big(|y|^2+|z|^2\big)\Big]\mbox{d}x\mbox{d}t
+C\mathbb{E}\int_0^T\int_{\Gamma}\chi^2 |y|^2\mbox{d}\Gamma\mbox{d}t\\\ns
&\ds\leq C\mathbb{E}\int_Q\big(|Y|^2+|Z|^2\big)\mbox{d}x\mbox{d}t
+C\mathbb{E}\int_{J_1}\int_G\gamma\big(|y|^2+|z|^2\big)\mbox{d}x\mbox{d}t
+C\mathbb{E}\int_0^T\int_{\Gamma} |y|^2\mbox{d}\Gamma\mbox{d}t.
\end{array}
\end{eqnarray}
\fi
Thanks to $(\ref{3c1})$ and $(\ref{3c})$, we derive
\begin{eqnarray}\label{po03}
\begin{array}{ll}
\ds \frac{1}{3}\alpha\lambda^2e^{\alpha\lambda}\mathbb{E}\int_{\mathcal{J}_2}\int_G
\big(|y|^2+|z|^2\big)\mbox{d}x \mbox{d}t
\leq \mathbb{E}\int_{Q} \beta\chi^2\big(|y|^2+|z|^2\big)\mbox{d}x \mbox{d}t,
\end{array}
\end{eqnarray}
and 
\begin{eqnarray}\label{po04}
\begin{array}{ll}
\ds \mathbb{E}\int_Q\gamma\chi_t^2\big(|y|^2+|z|^2\big)\mbox{d}x\mbox{d}t
\leq \frac{3(2M^2+1)}{\epsilon T}\alpha\lambda^2e^{\alpha\lambda}\mathbb{E}\int_{\mathcal{J}}\int_G
\big(|y|^2+|z|^2\big)\mbox{d}x\mbox{d}t.
\end{array}
\end{eqnarray}
By $(\ref{L21})$-$(\ref{L211})$ and combining $(\ref{po01})$-$(\ref{po04})$, let $\lambda=\lambda_0\geq 1$,  one can  obtain that
\begin{eqnarray*}\label{po05}
\begin{array}{ll}
&\ds \mathbb{E}\int_{G}
\big(|y(x,T)|^2+|z(x,T)|^2\big)\mbox{d}x\\\ns
&\ds\leq \frac{1}{|\mathcal{J}_2|}\mathbb{E}\int_{\mathcal{J}_2}\int_G(|y(x,t)|^2
+|z(x,t)|^2)\mbox{d}x\mbox{d}t
+C\mathbb{E}\int_Q\big(|Y|^2+|Z|^2\big)\mbox{d}x\mbox{d}t\\\ns
&\ds\leq \frac{3}{\alpha\lambda_0^2e^{\alpha\lambda_0}|\mathcal{J}_2|}\mathbb{E}\int_Q\beta\chi^2(|y(x,t)|^2+|z(x,t)|^2)\mbox{d}x\mbox{d}t
+C\mathbb{E}\int_Q\big(|Y|^2+|Z|^2\big)\mbox{d}x\mbox{d}t\\\ns
&\ds \leq \frac{3}{|\mathcal{J}_2|}\frac{3(2M^2+1)}{\epsilon T} \mathbb{E}\int_{\mathcal{J}}\int_G\big(|y|^2+|z|^2\big)\mbox{d}x \mbox{d}t
+C\mathbb{E}\int_Q\big(|Y|^2+|Z|^2\big)\mbox{d}x\mbox{d}t
+C\mathbb{E}\int_0^T\int_{\Gamma} |y|^2\mbox{d}\Gamma\mbox{d}t\\\ns
&\ds\leq \frac{3}{|\mathcal{J}_2|}\frac{3(2M^2+1)}{\epsilon T} |\mathcal{J}| \mathbb{E}\int_{G} \big(|y(x,T)|^2+|z(x,T)|^2\big)\mbox{d}x
+C\mathbb{E}\int_Q\big(|Y|^2+|Z|^2\big)\mbox{d}x\mbox{d}t\\\ns
&\ds\q
+C\mathbb{E}\int_0^T\int_{\Gamma}|y|^2\mbox{d}\Gamma\mbox{d}t,
\end{array}
\end{eqnarray*}
where $|\mathcal{J}_2|$ denotes the measure of $\mathcal{J}_2$.
Since  $\epsilon>0$ is sufficiently small and $(\ref{3ab1})$, we have 
$$
\frac{3}{|\mathcal{J}_2|}\frac{3(2M^2+1)}{\epsilon T} |\mathcal{J}|
=\frac{3}{T-4\epsilon T}\frac{3(2M^2+1)}{\epsilon T} 2\epsilon T
=\frac{18(2M^2+1)}{(1-4\epsilon)T}<1,
$$
then one can get the desired observability estimate
\begin{eqnarray*}\label{po06}
\begin{array}{ll}
\ds \mathbb{E}\int_{G}
\big(|y(x,T)|^2+|z(x,T)|^2\big)\mbox{d}x
\leq C\mathbb{E}\int_Q\big(|Y|^2+|Z|^2\big)\mbox{d}x\mbox{d}t
+C\mathbb{E}\int_0^T\int_{\Gamma}|y|^2\mbox{d}\Gamma\mbox{d}t.
\end{array}
\end{eqnarray*}
\hfill$\Box$

\begin{remark}
    As can be seen from the above proof, the restriction on the time $T$ in relation to the size of the spatial domain mainly arises from the choice of the time cut-off function. However, we believe that the value $T^*$ given in $(\ref{3ab1})$ is not sharp.
\end{remark}

\section{Controllability}
In this section,  we  prove the controllability result for the stochastic Maxwell 
equations.
By the classical duality argument and the above equality, the exact controllability of $(\ref{1a})$ is equivalent to  an observability estimate of the adjoint system $(\ref{20a})$.
\begin{proposition}\label{P1}
The system {\rm(\ref{1a})} is exactly controllable at time $T$ if and only if there is a constant $C>0$ such that $(\ref{T1011})$ holds for all the solution $(y,z)$ of $(\ref{20a})$.
\end{proposition}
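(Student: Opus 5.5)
The plan is the classical Hilbert-uniqueness/duality argument, written in operator form so that the transposition formula (\ref{D11W}) provides the bridge.

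First, reduce to zero initial data. By linearity and Proposition \ref{p1}, the state at time $T$ splits as ${\bf E}(T),{\bf H}(T)$ equal to the free solution (with $f=g=0$, $u=0$) started from $({\bf E}_0,{\bf H}_0)$, plus the response to the controls from zero initial data. Hence exact controllability is equivalent to surjectivity of the linear map
\[
\mathcal{L}_T:(f,g,u)\mapsto({\bf E}(T),{\bf H}(T)),\qquad ({\bf E}_0,{\bf H}_0)=(0,0),
\]
from $\mathcal{U}\triangleq L^2_\mathbb{F}(0,T;(L^2(G))^3)^2\times L^2_\mathbb{F}(0,T;(L^2(\Gamma))^3)$ into $L^2_{\mathcal{F}_T}(\Omega;J_1^1)\times L^2_{\mathcal{F}_T}(\Omega;J_2^1)$. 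By Proposition \ref{p1}, $\mathcal{L}_T$ is bounded.

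Next, identify the adjoint. The dual of the target space, with respect to the pivots $J_1,J_2$, is $L^2_{\mathcal{F}_T}(\Omega;\hat J_1)\times L^2_{\mathcal{F}_T}(\Omega;\hat J_2)$. The pairing in (\ref{D11W}) pairs ${\bf E}(T)$ with $z(T)$ and ${\bf H}(T)$ with $-y(T)$, so we take terminal data $(y_T,z_T)\in L^2_{\mathcal{F}_T}(\Omega;\hat J_2)\times L^2_{\mathcal{F}_T}(\Omega;\hat J_1)$ in (\ref{20a}). Applying (\ref{D11W}) with $\tau=T$ and zero initial data gives
\[
\langle \mathcal{L}_T(f,g,u),(z_T,-y_T)\rangle=\mathbb{E}\int_Q (f\cdot Z-g\cdot Y)\,{\rm d}x{\rm d}t+\mathbb{E}\int_0^T\!\!\int_\Gamma y\cdot u\,{\rm d}\Gamma{\rm d}t .
\]
Therefore
\[
\mathcal{L}_T^*(z_T,-y_T)=(Z,-Y,y|_\Gamma),
\]
and this is well defined in $\mathcal{U}$ by Lemma \ref{l1} together with the boundary trace regularity implicit in Definition \ref{D11}.

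Then apply the standard functional-analytic fact: a bounded operator between Hilbert spaces is surjective if and only if its adjoint is bounded below. Concretely, one needs
\[
|(y_T,z_T)|\le C\,|\mathcal{L}_T^*(z_T,-y_T)|_{\mathcal{U}},
\]
and this is precisely (\ref{T1011}). For sufficiency one can also argue constructively. Minimize
\[
\frac12|\mathcal{L}_T^*(z_T,-y_T)|^2_{\mathcal{U}}-\langle \text{target}-\text{free part},(z_T,-y_T)\rangle .
\]
This functional is coercive by (\ref{T1011}), so a minimizer exists, and its Euler--Lagrange equation shows that $(f,g,u)=(Z,-Y,y|_\Gamma)$ steers the system to the target. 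For necessity, surjectivity together with the open mapping theorem gives a bounded right inverse, and duality then yields (\ref{T1011}).

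The main obstacle is matching the norms. The observability inequality (\ref{T1011}) is written with $L^2(G)$ norms of $y(T),z(T)$, whereas the dual of $J^1_1\times J^1_2$ is $\hat J_1\times\hat J_2$. I would handle this by carefully identifying the state and dual spaces so that the pivot pairings agree, interpreting $|\cdot|$ in (\ref{T1011}) as the corresponding dual norm, which is consistent with Lemma \ref{l1}. I would also verify that the trace $y|_\Gamma$ lies in $L^2$ so that $\mathcal{L}_T^*$ maps into $\mathcal{U}$.
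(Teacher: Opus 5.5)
Your overall route (reduce to zero initial data, read off $\mathcal{L}_T^*(z_T,-y_T)=(Z,-Y,y|_{\Gamma})$ from \eqref{D11W}, and use that a bounded operator is onto iff its adjoint is bounded below) is the standard duality argument the paper defers to, and the adjoint computation is correct. The gap is in the choice of spaces. With target space $L^2_{\mathcal{F}_T}(\Omega;J_1^1)\times L^2_{\mathcal{F}_T}(\Omega;J_2^1)$, the dual variables are $\hat J_1\times\hat J_2$-valued. So ``$\mathcal{L}_T^*$ bounded below'' bounds $\mathbb{E}|z_T|^2_{\hat J_1}+\mathbb{E}|y_T|^2_{\hat J_2}$, not the $L^2$ norms in \eqref{T1011}. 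Your sentence ``this is precisely \eqref{T1011}'' is therefore false, and re-reading the left side of \eqref{T1011} as a dual norm changes the proposition rather than proving it. That framework also cannot be rescued, because $L^2$ controls do not produce $J^1$-valued states. Let $S(\cdot)$ be the unitary group on $J_1\times J_2$ of the free system ($f=g=0$, $u=0$), and take a deterministic $\phi\in J_1\times J_2$ with $\phi\notin J_1^1\times J_2^1$. The admissible controls $(f,g)(t)=S(t-T)\phi$, $u=0$, steer $0$ to $\int_0^T S(T-t)S(t-T)\phi\,{\rm d}W(t)=\phi\,W(T)$, which is not in $L^2_{\mathcal{F}_T}(\Omega;J_1^1\times J_2^1)$. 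Hence $\mathcal{L}_T$ is not a bounded map into your target space; equivalently, $\mathcal{L}_T^*$ is not bounded on $\hat J$-valued data. So Proposition~\ref{p1} and Lemma~\ref{l1} cannot be used in the form you invoke them.

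The mismatch you noticed has to be resolved the other way. Take the pivot $\mathcal{H}_T=L^2_{\mathcal{F}_T}(\Omega;J_1)\times L^2_{\mathcal{F}_T}(\Omega;J_2)$ as state space, identified with its own dual, with terminal data $(y_T,z_T)$ in $L^2$. Your adjoint formula is unchanged, ``$\mathcal{L}_T^*$ bounded below'' is then literally \eqref{T1011}, and this is exactly the $L^2$ pairing the paper uses in its proof of Theorem~\ref{T10}. Two points must then be made explicit. First, boundedness of $\mathcal{L}_T:\mathcal{U}\to\mathcal{H}_T$ needs, besides the energy identity of Lemma~\ref{L2} (which controls $Y$ and $Z$), the direct inequality $\mathbb{E}\int_0^T\int_\Gamma|y|^2{\rm d}\Gamma{\rm d}t\le C\,\mathbb{E}\int_G(|y_T|^2+|z_T|^2){\rm d}x$. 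This is a hidden-regularity property of the adjoint system, not something contained in Definition~\ref{D11}, and your argument presupposes it. Second, granting it, duality shows that \eqref{T1011} is equivalent to reachability of every $L^2$ target. That implies Definition~\ref{D2}, so the ``if'' direction (the one needed for Theorem~\ref{T10}) goes through. The ``only if'' direction does not follow as stated. Reachability of only the $J_1^1\times J_2^1$-valued targets of Definition~\ref{D2} yields, via the range-inclusion criterion ($\mathrm{Range}\,G\subseteq\mathrm{Range}\,F$ iff $|G^*\xi|\le C|F^*\xi|$, with $G$ the embedding of the $J^1$-valued targets), only the weaker bound with $\hat J$ norms on the left. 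The ``only if'' direction should therefore be formulated with $L^2$ targets.
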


The proof of Proposition \ref{P1} is based on the duality argument, as in \cite{LZ1}, and is therefore omitted here. It follows from Proposition \ref{P1} that  the controllability result can be reduced to an observability estimate.
Now, we give the proof  of Theorem {\rm\ref{T10}}.

\ms

\noindent{\bf Proof of Theorem {\rm\ref{T10}}.} Since the system $(\ref{1a})$ is linear, we need to show that for any $({\bf E}_T,{\bf H}_T)$, there exists a pair of control $(f, g, u)$ such that the corresponding solution of $(\ref{1a})$ with zero initial data $({\bf E}_0,{\bf  H}_0)=(0,0)$ satisfies $({\bf E}(x, T),{\bf H}(x, T))=({\bf E}_T(x),{\bf H}_T(x))$.  Let us introduce a linear subspace 
$$
\mathcal{X}=\left\{(y|_{\Gamma\times (0,T)}, Y, Z)|(y, z, Y, Z)~ \mbox{solves system}~ (\ref{20a})~ \mbox{with}~ (y_T,z_T)\in \hat{J}_2 \times \hat{J}_1 \right\}
$$
of $L^2_\mathbb{F}(0,T; (L^2(\Gamma))^3)\times L^2_\mathbb{F}(0,T; (L^2(G))^3)\times L^2_\mathbb{F}(0,T; (L^2(G))^3)$
and define a linear functional on $\mathcal{X}$ as follows
$$
\mathcal{L}(y|_{\Gamma\times (0,T)}, Y, Z)=\ds\mathbb{E}\int_G \big[{\bf E}_T\cdot z_T(x)
-{\bf H}_T\cdot y_T(x)\big]\mbox{d}x. 
$$ 
By means of Proposition \ref{P1}, we see that $\mathcal{L}$ is a bounded linear functional on $\mathcal{X}$. By
the Hahn–Banach theorem, $\mathcal{L}$ can be extended to a bounded linear functional on $(L^2_\mathbb{F}(0,T; (L^2(\Gamma))^3)\times L^2_\mathbb{F}(0,T; (L^2(G))^3)\times L^2_\mathbb{F}(0,T; (L^2(G))^3)$. For simplicity, we use the same notation for this
extension. Now, the Riesz representation theorem allows us to find a pair of random fields $(f, g, u)\in L^2_\mathbb{F}(0,T; (L^2(G))^3)\times L^2_\mathbb{F}(0,T; (L^2(G))^3)\times L^2_\mathbb{F}(0,T; (L^2(\Gamma))^3)$ so that
\begin{eqnarray}\label{p4211}
\begin{array}{ll}
\ds\q
\mathbb{E}\int_G \big[{\bf E}_T\cdot z_T(x)-{\bf H}_T\cdot y_T(x)\big]\mbox{d}x 
=-\mathbb{E}\int_0^T\int_{\Gamma} y\cdot u\mbox{d}\Gamma\mbox{d}t+\mathbb{E}\int_Q\big(f\cdot Z-g\cdot Y\big)\mbox{d}x\mbox{d}t.
\end{array}
\end{eqnarray}
Combining $(\ref{1a})$ with $(\ref{20a})$, one can see that
\begin{eqnarray}\label{2b11}
\begin{array}{ll}
&\ds\mathbb{E}\int_G \big[{\bf E}(x,T)\cdot z_T(x)-{\bf E}_0(x)\cdot z(x,0)-{\bf H}(x,T)\cdot y_T(x)+{\bf H}_0(x)\cdot y(x,0)\big]\mbox{d}x \\\ns&\ds
= \mathbb{E}\int_Q\big[-{\bf E}\cdot \mbox{curl}~y+z\cdot \mbox{curl}~{\bf H}+f\cdot Z
-{\bf H}\cdot \mbox{curl}~z-y\cdot \big(-\mbox{curl}~{\bf E}\big)-g\cdot Y\big]\mbox{d}x\mbox{d}t\\\ns&\ds
=\mathbb{E}\int_Q\big[\mbox{div}({\bf E}\times y)+\mbox{div}({\bf H}\times z)+f\cdot Z-g\cdot Y\big]\mbox{d}x\mbox{d}t\\\ns&\ds
=\mathbb{E}\int_0^T\int_\Gamma\big[\nu\cdot({\bf E}\times y)+\nu\cdot({\bf H}\times z)\big]\mbox{d}\Gamma\mbox{d}t+\mathbb{E}\int_Q\big(f\cdot Z-g\cdot Y\big)\mbox{d}x\mbox{d}t\\\ns&\ds
=-\mathbb{E}\int_0^T\int_{\Gamma} y\cdot u\mbox{d}\Gamma\mbox{d}t+\mathbb{E}\int_Q\big(f\cdot Z-g\cdot Y\big)\mbox{d}x\mbox{d}t.
\end{array}
\end{eqnarray}
Together $(\ref{p4211})$ with $(\ref{2b11})$, we find
\begin{eqnarray*}
\begin{array}{ll}
\ds\mathbb{E}\int_G \big[{\bf E}(x,T)\cdot z_T(x)-{\bf H}(x,T)\cdot y_T(x)\big]\mbox{d}x
=\mathbb{E}\int_G \big[{\bf E}_T\cdot z_T(x)-{\bf H}_T\cdot y_T(x)\big]\mbox{d}x.
\end{array}
\end{eqnarray*}
Since $(y_T,z_T)$ can be chosen arbitrarily, this implies that $({\bf E}(x, T),{\bf H}(x, T))=({\bf E}_T(x),{\bf H}_T(x))$ in $G$, $\mathbb P$-$a.s.$
\hfill$\Box$

\ms

\ms

At a first glance, it  seems unreasonable, especially for that the controls in the diffusion terms of $(\ref{1a})$ are acted on  the whole domain $G$. One may naturally ask whether localized controls would suffice, or whether the boundary control could be dropped. However, the answer is negative. More precisely, we consider the following stochastic Maxwell equations 
\begin{eqnarray}\label{1aa}
\left\{
\begin{array}{ll}
\mbox{d}{\bf E}-\mbox{curl}~{\bf H}\mbox{d}t
=v_1\mbox{d}t+f\mbox{d}W(t) &\mbox{ in }Q,\\\ns\ds
\mbox{d}{\bf H}+\mbox{curl}~{\bf E}\mbox{d}t
=v_2\mbox{d}t &\mbox{ in }Q,\\\ns\ds
\mbox{div} {\bf E}=\mbox{div} {\bf H}=0 &\mbox{ in }Q,\\ \ns\ds
{\bf E}\times \nu=u &\mbox{ on } \Gamma\times(0,T),\\ \ns\ds
{\bf E}(x,0)={\bf E}_0(x),\q  {\bf H}(x,0)={\bf H}_0(x) &\mbox{ in }G,
\end{array}
\right.
\end{eqnarray}
and 
\begin{eqnarray}\label{1ab}
\left\{
\begin{array}{ll}
\mbox{d}{\bf E}-\mbox{curl}~{\bf H}\mbox{d}t
=v_1\mbox{d}t &\mbox{ in }Q,\\\ns\ds
\mbox{d}{\bf H}+\mbox{curl}~{\bf E}\mbox{d}t
=v_2\mbox{d}t+g\mbox{d}W(t)%\mbox{d}t 
&\mbox{ in }Q,\\\ns\ds
\mbox{div} {\bf E}=\mbox{div} {\bf H}=0 &\mbox{ in }Q,\\ \ns\ds
{\bf E}\times \nu=u &\mbox{ on } \Gamma\times(0,T),\\ \ns\ds
{\bf E}(x,0)={\bf E}_0(x),\q  {\bf H}(x,0)={\bf H}_0(x) &\mbox{ in }G,
\end{array}
\right.
\end{eqnarray}
where $v_1, v_2, f, g, u$ are controls.
\begin{theorem}\label{T1a}
The systems $(\ref{1aa})$ and $(\ref{1ab})$ are not exactly controllable at any time $T>0$.
 \end{theorem}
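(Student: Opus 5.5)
Following the approach of \cite{LZ1, yz} for stochastic systems in which one equation carries no noise, we will argue by contradiction. The obstruction should come from the noise-free equation: its solution component is too regular in time, so it cannot reach a generic $\mathcal{F}_T$-measurable target.

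Consider first $(\ref{1aa})$, where the ${\bf H}$-equation has no diffusion term. Suppose the system were exactly controllable at some $T>0$. We will construct a target that cannot be reached. Fix a nonzero deterministic $h_0\in J_2^1$ and set
\[
{\bf H}_T=\xi\,h_0,\qquad \xi=\int_0^T\rho(t)\,{\rm d}W(t),
\]
with $\rho\in L^2(0,T)$ deterministic and $\rho\neq0$. Take ${\bf E}_T=0$ and zero initial data. If controls $(v_1,v_2,f,u)$ steer the solution to $(0,{\bf H}_T)$, then pairing the ${\bf H}$-equation with a test field $\phi\in J_3$ and integrating gives, $\mathbb{P}$-a.s.,
\[
\int_G {\bf H}(T)\cdot\phi\,{\rm d}x=\int_0^T\eta(t)\,{\rm d}t,
\qquad
\eta(t)=\int_G\big(v_2\cdot\phi-{\bf E}\cdot{\rm curl}\,\phi\big)\,{\rm d}x .
\]
This is justified by the transposition definition, using the duality $(\ref{D11W})$ with a deterministic backward solution, or more simply by a weak formulation with smooth test fields. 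Here $\eta\in L^2_{\mathbb F}(0,T)$ is adapted. Hence $\xi\,(h_0,\phi)=\int_0^T\eta\,{\rm d}t$ for every $\phi$; choose $\phi$ with $(h_0,\phi)=1$.

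The key step is to show that such an identity is impossible for a suitably chosen $\rho$. Following \cite{LZ1}, we will use $\rho(t)=\chi_{[0,T/2]}(t)$, or more robustly a family of targets of this type, and derive a contradiction through an estimate. Exact controllability together with the closed graph or open mapping theorem gives a bound on the control norm $|(v_1,v_2,f,u)|\le C|{\bf H}_T|$. By Proposition \ref{p1}, this yields $\mathbb{E}\int_0^T|\eta|^2{\rm d}t\le C\,\mathbb{E}|\xi|^2$.

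Now take $\rho=\chi_{[T-\delta,T]}$, so $\xi=W(T)-W(T-\delta)$ and $\mathbb{E}|\xi|^2=\delta$. Then
\[
\xi=\mathbb{E}(\xi\mid\mathcal{F}_{T-\delta})+\int_{T-\delta}^T\eta\,{\rm d}t-\mathbb{E}\Big(\int_{T-\delta}^T\eta\,{\rm d}t\,\Big|\,\mathcal{F}_{T-\delta}\Big),
\]
where we have used that $\mathbb{E}(\xi\mid\mathcal{F}_{T-\delta})=\mathbb{E}\big(\int_0^T\eta\,{\rm d}t\mid\mathcal{F}_{T-\delta}\big)$ and that $\mathbb{E}(\xi\mid\mathcal{F}_{T-\delta})=0$. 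Therefore
\[
\delta=\mathbb{E}|\xi|^2\le 4\,\mathbb{E}\Big|\int_{T-\delta}^T\eta\,{\rm d}t\Big|^2\le 4\delta\,\mathbb{E}\int_{T-\delta}^T|\eta|^2{\rm d}t .
\]
To close the argument we need $\mathbb{E}\int_{T-\delta}^T|\eta|^2\,{\rm d}t\to0$ as $\delta\to0$, which we cannot get uniformly from the bound above.

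This is the main obstacle. The fix, as in \cite{LZ1}, is to run the controllability map from time $T-\delta$ instead of from time $0$. Instead, we will use a cleaner argument by contradiction via Itô's representation. Write $\int_0^T\eta\,{\rm d}t=\xi$ and observe that the process $M(t)=\mathbb{E}(\xi\mid\mathcal{F}_t)=\int_0^{t}\rho\,{\rm d}W$ must coincide with $\mathbb{E}\big(\int_0^T\eta\,{\rm d}s\mid\mathcal{F}_t\big)$. For $\rho$ given, say, by $\rho(t)=\chi_{(T/2,T)}(t)\,{\rm sgn}(W(t)-W(T/2))$, or by another choice producing a target with no $L^2$-in-time drift representation, the quadratic variation gives a contradiction: $\int_0^T\eta\,{\rm d}t$ has a martingale part whose integrand comes only from the future conditional expectations. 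We expect to borrow the explicit counterexample from \cite[Chapter 7]{LZ1}, which shows that the map $\eta\mapsto\int_0^T\eta\,{\rm d}t$ from $L^2_{\mathbb F}(0,T)$ to $L^2_{\mathcal F_T}(\Omega)$ is not surjective: its range is dense but not closed. Combining this non-surjectivity with the bound obtained from the open mapping theorem gives the contradiction.

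System $(\ref{1ab})$ is handled symmetrically, placing the target on ${\bf E}$, which now has no noise. Testing is done with $\phi\in J_3$, so the boundary control $u$ does not enter the resulting identity.
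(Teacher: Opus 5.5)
Your final argument is the same as the paper's. You test the noise-free equation against $\phi\in J_3$ to get $\int_G{\bf H}(T)\cdot\phi\,{\rm d}x=\int_0^T\eta\,{\rm d}t$ pathwise. A target $\xi h_0$ is then unreachable as soon as $\xi$ is not of the form $a+\int_0^T\eta\,{\rm d}t$ with $\eta\in L^2_{\mathbb F}(0,T)$. That is Lemma \ref{L41} with $\xi_2=0$, which is what the paper's omitted proof, modelled on \cite[Theorem 10.2]{LZ1}, relies on. Given that lemma, the open-mapping bound is not needed at all.

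Two of your intermediate claims are wrong, however. First, the choice $\rho=\chi_{[0,T/2]}$ yields no contradiction, because $W(T/2)=\int_0^T\frac{2}{T}\chi_{[T/2,T]}(t)W(T/2)\,{\rm d}t$ with an adapted integrand; the noise in $\xi$ must not switch off before time $T$. Second, and more importantly, the ``main obstacle'' you identify is not there. With $\xi=W(T)-W(T-\delta)$, the bound you already derived reads $\mathbb E\int_0^T|\eta|^2{\rm d}t\le C\,\mathbb E|\xi|^2=C\delta$, where $C$ depends on $h_0,\phi$ but not on $\delta$. Your chain therefore gives $\delta\le 4\delta\cdot C\delta$, i.e. $1\le 4C\delta$, which is false for small $\delta$.

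You can make this even simpler by fixing $\xi=W(T)$ and one admissible $\eta$. For every $\delta\in(0,T)$ you have $W(T)-W(T-\delta)=\int_{T-\delta}^T\eta\,{\rm d}t-\mathbb E\big(\int_{T-\delta}^T\eta\,{\rm d}t\,\big|\,\mathcal F_{T-\delta}\big)$. Together with $\mathbb E|X-\mathbb E(X\,|\,\mathcal F_{T-\delta})|^2\le\mathbb E|X|^2$ and Cauchy--Schwarz, this gives $\mathbb E\int_{T-\delta}^T|\eta|^2{\rm d}t\ge 1$. That contradicts $\mathbb E\int_{T-\delta}^T|\eta|^2{\rm d}t\to0$ as $\delta\to0$, by absolute continuity of the integral.

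Either version proves, in a few lines, exactly the special case of Lemma \ref{L41} that is needed. Your route thus becomes self-contained and more elementary than the paper's appeal to Peng's result, and you can delete the sign-function example and the citation for non-surjectivity.

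One minor point: pairing with a deterministic backward solution in (\ref{D11W}) only produces identities in expectation. The pathwise identity must therefore come from the weak formulation with compactly supported test fields, as in your second option.
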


In fact, both %of these 
equations $(\ref{1aa})$ and $(\ref{1ab})$ can be transformed into classical stochastic wave equation
\begin{equation*}
\begin{array}{ll}
\mbox{d}{\bf E}_t-\Delta{\bf E}\mbox{d}t
=\tilde{f}\mbox{d}t+\tilde{g}\mbox{d}W(t)%\mbox{d}t 
&\mbox{ in }Q.
\end{array}
\end{equation*}
The proof of Theorem \ref{T1a} is similar to the proof  in \cite[Theorem 10.2]{LZ1}
%of Theorem 10.2 in \cite{LZ1}, 
and thus we omit it here. It is shown that the %classical 
stochastic Maxwell equations
%system  
which only consider noise on one equation is not exactly controllable at any time, even if the controls acted everywhere on the domain $G$ and  boundary $\Gamma$.

Although it is necessary to put controls $f$ and $g$ on the whole domain, one may suspect that Theorem \ref{T10} is trivial.
However, we have the following negative result.

\begin{theorem}\label{T41}
The system $(\ref{1a})$ is not exactly controllable at any time $T>0$ provided
that one of the following three conditions is satisfied:\\[2mm]
{\rm 1)} $supp~f\subseteq G_0$, where $G_0\subsetneqq G;$\\[2mm]
{\rm 2)} $supp~g\subseteq G_0$, where $G_0\subsetneqq G;$\\[2mm]
{\rm 3)} $u=0$.
 \end{theorem}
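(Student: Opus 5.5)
By Proposition \ref{P1} and the duality behind it, exact controllability with restricted controls is equivalent to the observability inequality in which the observation terms are restricted accordingly. Under condition 1), $f\cdot Z$ only enters through $G_0$, so the inequality would read
\[
\mathbb{E}\int_G\big(|y(T)|^2+|z(T)|^2\big)dx\le C\,\mathbb{E}\int_0^T\!\!\int_\Gamma|y|^2d\Gamma dt+C\,\mathbb{E}\int_0^T\!\!\int_{G_0}|Z|^2dxdt+C\,\mathbb{E}\int_Q|Y|^2dxdt .
\]
Conditions 2) and 3) give the analogous inequalities with $Y$ restricted to $G_0$, respectively with the boundary term deleted. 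In each case I will disprove the inequality by constructing solutions of (\ref{20a}) for which the right-hand side vanishes, or is arbitrarily small, while the terminal energy stays of order one. This is the standard counterexample technique of \cite[Chapter 10]{LZ1}.

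\emph{Case 3).} If $u=0$, there is no control on the drift. Take any deterministic, nonzero, divergence-free initial datum $(y_0,z_0)$ satisfying the boundary conditions of (\ref{20a}). Let $(\tilde y,\tilde z)$ be the deterministic Maxwell solution starting from it, and set $(y,z)=\theta(t)(\tilde y,\tilde z)$, where $\theta$ is a scalar process to be chosen. Then $Y=\sigma\tilde y$ and $Z=\sigma\tilde z$ with $d\theta=\sigma\,dW$. Since $\mathbb E\int_Q(|Y|^2+|Z|^2)=\|(y_0,z_0)\|^2\,\mathbb E\int_0^T|\sigma|^2dt$ by energy conservation, I choose $\theta(t)=\mathbb E\xi+\int_0^t\sigma\,dW$ with $\mathbb E\xi=1$ and $\sigma$ concentrated so that $\mathbb E\int|\sigma|^2$ is small, e.g. $\sigma=0$ and $\theta\equiv 1$. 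The right-hand side then reduces to $C\,\mathbb E\int_Q(|Y|^2+|Z|^2)=0$, while the left-hand side equals $\|(y_0,z_0)\|^2>0$. This is a contradiction.

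\emph{Cases 1) and 2).} Here the boundary term is available, so I need $y|_\Gamma\approx 0$, $Y$ small everywhere, and $Z$ supported away from $G_0$ with $Z$ small; for case 2) the roles of $Y$ and $Z$ are swapped. I follow the proof of \cite[Theorem 10.2]{LZ1}. Pick $x_0\in G\setminus\overline{G_0}$ and a ball $B\subset G\setminus\overline{G_0}$. For case 2) take
\[
y\equiv 0,\qquad z(t)=\Big(\int_0^t\eta(s)\,dW(s)\Big)\varphi(x),
\]
where $\varphi\in C_0^\infty(B)$ is a nonzero divergence-free field (e.g. $\mathrm{curl}$ of a compactly supported field) and $\eta$ is a deterministic scalar. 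Then $dz=Z\,dW$ with $Z=\eta\varphi$, and the first equation forces $\mathrm{curl}\,z=0$. This fails for such $\varphi$, so the ansatz must be corrected. The main obstacle is therefore to produce a nontrivial stochastic solution of the coupled system whose diffusion part vanishes on $G_0$ and on the $y$-component. I will try to handle it by letting the drift act: choose $y=\int_0^t\mathrm{curl}\,z\,ds$. Then $dz+\mathrm{curl}\,y\,dt=Z\,dW$ gives $Z$ through $dz=-\mathrm{curl}\,y\,dt+Z\,dW$ with $Z$ supported in $B$. The boundary term and $Y$ vanish exactly because $y$ has bounded variation and $\mathrm{supp}\,y\subset B$. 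The terminal energy is of order $\mathbb E\int_0^T|\eta|^2$, while the observed quantity $\mathbb E\int_{G_0}|Z|^2$ is zero, which contradicts the restricted inequality. Case 1) is symmetric, exchanging $y$ and $z$ and using that $\mathrm{curl}$ maps fields with $z\times\nu=0$ appropriately. If the exact-support construction breaks down, I fall back on a transposition-level argument: show that the reachable set at time $T$ is contained in $\{\,\cdot\,\}$ satisfying a nontrivial adaptedness/measurability constraint on $B$, as in \cite{LZ1}, and exhibit a target in $L^2_{\mathcal F_T}(\Omega;J_1^1)$ outside it.
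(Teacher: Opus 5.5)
Your Case 3) is correct and is the paper's argument: with $u=0$ the dual inequality has no boundary term, and any deterministic Maxwell solution ($Y=Z=0$) with nonzero data violates it.

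\textbf{The gap is in Cases 1) and 2).} You flagged the main obstacle yourself, but your corrected ansatz does not overcome it, under either reading.

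If you keep $z=(\int_0^t\eta\,dW)\varphi$ and set $y=\int_0^t\mathrm{curl}\,z\,ds$, then
$dz+\mathrm{curl}\,y\,dt=\eta\varphi\,dW+m(t)\,\mathrm{curl}\,\mathrm{curl}\,\varphi\,dt$,
where $m(t)=\int_0^t\int_0^s\eta(r)\,dW(r)\,ds\not\equiv0$ and $\mathrm{curl}\,\mathrm{curl}\,\varphi=-\Delta\varphi\neq0$. So the second equation of \eqref{20a} fails.

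If instead you solve $dy=\mathrm{curl}\,z\,dt$, $dz=-\mathrm{curl}\,y\,dt+\eta\varphi\,dW$ forward from zero data, you do get $Y=0$ and $Z=\eta\varphi$. But the Maxwell flow spreads the support at unit speed, so the claim $\mathrm{supp}\,y\subset B$ is false. Once the waves reach $\Gamma$, the tangential trace of $y$ is in general nonzero, and $\mathbb E\int_0^T\int_\Gamma|y|^2$ is not shown to vanish or be small.

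Your case labels are also swapped. The construction with $Y=0$ and $Z$ supported in $B$ suits Case 1), where $Z$ is observed only on $G_0$. In Case 2), $Z$ is observed on all of $G$, so there you need $Z=0$ and $Y$ supported off $G_0$.

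\textbf{How to repair your dual route.} Inject the noise only at the very end. Take a deterministic $\eta\not\equiv0$ supported in $[T-\delta,T]$ with $0<\delta<\mathrm{dist}(B,\Gamma)$, and set $(y,z)(t)=\int_0^t\eta(s)S(t-s)(0,\varphi)\,dW(s)$, with $S$ the Maxwell group on $\mathbb R^3$. By finite propagation speed, $(y,z)(t)$ stays supported in the $\delta$-neighbourhood of $B$ for all $t\in[0,T]$. Consequently:
the boundary conditions in \eqref{20a} hold trivially;
$y|_\Gamma=0$, $Y=0$ and $Z|_{G_0}=0$;
$\mathbb E\|(y,z)(T)\|^2_{L^2}=\|\varphi\|^2_{L^2}\int_0^T\eta^2\,dt>0$.
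The restricted right-hand side is then exactly zero, whatever norm is used on the left.

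\textbf{The paper's route.} The paper does not use duality for Cases 1) and 2). Assuming controllability, it steers $(0,0)$ to $(\rho\xi,0)$, where $\rho$ is divergence-free with $|\rho|_{L^2}=1$ and support in $G\setminus\overline{G_0}$, and $\xi$ is as in Lemma~\ref{L41}. Testing the $\mathbf E$-equation with $\rho$ kills the stochastic integral $\int_0^T(f,\rho)\,dW$. This gives $\xi=\int_0^T(\mathbf H,\mathrm{curl}\,\rho)\,dt$, contradicting Lemma~\ref{L41}. Your closing fallback points in this direction, but it names neither the test-function mechanism nor this non-representability lemma. As written, it cannot replace the missing argument.
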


Before proving Theorems \ref{T41}, we recall the following known result (\cite{P}).
 \begin{lemma}\label{L41}
There is a random variable $\xi\in L^2_{F_T}(\Omega)$ such that it is impossible to find $(\xi_1,\xi_2)\in$ and a constant $a\in R$ satisfying
$$
\xi=a+\int_0^T\xi_1(t){\rm d}t+\int_0^T\xi_2(t){\rm d}W(t).
$$
 \end{lemma}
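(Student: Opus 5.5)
The statement as printed leaves the spaces for $(\xi_1,\xi_2)$ blank. With $\xi_2$ merely in $L^2_{\mathbb F}(0,T)$ the claim would be false, since the martingale representation theorem writes every $\xi\in L^2_{\mathcal F_T}(\Omega)$ as $\mathbb E\xi+\int_0^T\xi_2{\rm d}W$. I therefore read it as in \cite[Lemma 10.1]{LZ1} and \cite{P}:
\[
\xi_1\in L^1_{\mathbb F}(0,T;L^2(\Omega)),\qquad \xi_2\in C_{\mathbb F}([0,T];L^2(\Omega)).
\]
The mechanism is that the representation forces the It\^o integrand of $\xi$ to be "continuous modulo a drift contribution". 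A deterministic integrand with a jump should violate this.

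\emph{Step 1 (abstract reduction).} Argue by contradiction: suppose every $\xi\in L^2_{\mathcal F_T}(\Omega)$ is representable. Consider the bounded linear map
\[
\mathcal A(a,\xi_1,\xi_2)=a+\int_0^T\xi_1{\rm d}t+\int_0^T\xi_2{\rm d}W
\]
from $\mathbb R\times L^1_{\mathbb F}(0,T;L^2(\Omega))\times C_{\mathbb F}([0,T];L^2(\Omega))$ to $L^2_{\mathcal F_T}(\Omega)$. It would be surjective, so the open mapping theorem gives a constant $C$ such that each $\xi$ has a representation with
\[
|a|+|\xi_1|_{L^1_{\mathbb F}(0,T;L^2(\Omega))}+|\xi_2|_{C_{\mathbb F}([0,T];L^2(\Omega))}\le C|\xi|_{L^2(\Omega)}.
\]

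\emph{Step 2 (test family).} Take $\xi=\int_0^T\rho(t){\rm d}W(t)$ with $\rho$ deterministic and discontinuous, say $\rho=\chi_{[0,T/2]}$. Better, to exploit the quantitative bound, take a sequence $\rho_n$ of unit $L^2$ norm oscillating on finer and finer scales near $T/2$, for example $\rho_n(t)=\sqrt n\,\chi_{[T/2,T/2+1/n]}(t)$ or a Rademacher-type sign pattern.

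\emph{Step 3 (testing against increments).} For $0\le s<t\le T$, pair the identity $\xi=\mathcal A(a,\xi_1,\xi_2)$ with $W(t)-W(s)$ and take expectations. The left side equals $\int_s^t\rho\,{\rm d}r$. The stochastic integral contributes $\int_s^t\mathbb E\xi_2\,{\rm d}r$. In the drift term, adaptedness gives $\mathbb E[\xi_1(r)(W(t)-W(s))]=0$ for $r\le s$, and Cauchy--Schwarz bounds the contribution from $r\in(s,t)$ by $\sqrt{t-s}\int_s^t|\xi_1(r)|_{L^2(\Omega)}{\rm d}r$. The contribution from $r>t$ is the delicate one. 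I would first try to remove it by conditioning: replace $\xi$ by the terminal value of the backward stochastic equation generated by $\mathbb E[\cdot\,|\,\mathcal F_t]$. Equivalently, pass to the dual statement and use Step 1 to derive an inequality of the form
\[
|\eta|_{L^2}\le C\Bigl(|\mathbb E\eta|+\sup_t|\mathbb E[\eta|\mathcal F_t]|_{L^2}+|z|_{\text{dual of }C}\Bigr),
\]
where $z$ is the martingale integrand of $\eta$, and then violate it with the oscillating $\rho_n$.

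\emph{Main obstacle.} Controlling the anticipating drift contribution from $r>t$ (or, dually, identifying the dual norm of $C_{\mathbb F}([0,T];L^2(\Omega))$ and showing it is too weak to dominate $L^2$) is the heart of the matter. Once it is handled, dividing by $t-s$ and letting $t-s\to0$ across $T/2$ forces $\rho$ to agree with a continuous function up to a vanishing error, which contradicts the jump. If the direct estimate proves unwieldy, I would follow Peng's original construction in \cite{P} verbatim.
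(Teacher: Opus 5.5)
The paper gives no proof of this lemma; it only cites \cite{P}. Your argument therefore has to stand on its own, and it has two genuine problems.

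\textbf{The space you chose for $\xi_1$ makes the statement false.} You are right that $\xi_2$ must carry a continuity requirement. However, with $\xi_1\in L^1_{\mathbb F}(0,T;L^2(\Omega))$, every $\xi$ is representable with $a=\mathbb E\xi$ and $\xi_2=0$. Write $\xi=\mathbb E\xi+\int_0^T z\,{\rm d}W$. If $z$ vanishes near $T$, then $\xi-\mathbb E\xi$ is $\mathcal F_{t_*}$-measurable for some $t_*<T$ and is trivially a time integral, so assume otherwise. Choose $0=s_0<s_1<\cdots\uparrow T$ with $\int_{s_k}^T\mathbb E|z|^2{\rm d}t=4^{-k}\int_0^T\mathbb E|z|^2{\rm d}t$, and set $\eta_k=\int_{s_{k-1}}^{s_k}z\,{\rm d}W$. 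Define $\xi_1=\eta_k/(s_{k+1}-s_k)$ on $[s_k,s_{k+1})$ for $k\ge1$, and $\xi_1=0$ on $[0,s_1)$. Then $\xi_1$ is adapted, $\int_0^T\xi_1{\rm d}t=\sum_k\eta_k=\xi-\mathbb E\xi$, and $|\xi_1|_{L^1_{\mathbb F}(0,T;L^2(\Omega))}=\sum_k|\eta_k|_{L^2(\Omega)}\le 2|z|_{L^2_{\mathbb F}(0,T)}$. Your own dual inequality already signals this. Since $\mathbb E[\eta|\mathcal F_T]=\eta$, the term $\sup_t|\mathbb E[\eta|\mathcal F_t]|_{L^2}$ dominates the left side, so the inequality holds with $C=1$ and no sequence $\rho_n$ can violate it. The true version requires $\xi_1\in L^2_{\mathbb F}(0,T)$, i.e. square integrability in time. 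That version suffices for Theorem \ref{T41}, where the drift is continuous in time.

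\textbf{Your test family sits in the wrong place.} Any irregularity at an interior time is absorbed by the drift. The corresponding random variable is $\mathcal F_t$-measurable for some $t<T$, and there is still time left to integrate it in. For example, $W(T/2)=\int_0^T\tfrac{2}{T}\chi_{(T/2,T]}(t)W(T/2)\,{\rm d}t$. Likewise $\sqrt n\,(W(T/2+1/n)-W(T/2))$ is absorbed by drifts whose $L^2_{\mathbb F}(0,T)$ norms stay bounded, so even the quantitative bound of Step 1 yields nothing. The ``anticipating contribution from $r>t$'' is therefore not an error term to be controlled. It is exactly what cancels the jump, so the limiting argument of Step 3 cannot close.

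\textbf{How to fix it.} The obstruction has to sit at $T$. Take $\xi=\int_0^T(T-t)^{-1/4}{\rm d}W(t)$ and pair the representation with $W(T)-W(s)$. This is your Step 3 with $t=T$, so there is no anticipating term at all. Then:
\begin{itemize}
\item the left side equals $\tfrac43(T-s)^{3/4}$;
\item the $a$-term vanishes;
\item the ${\rm d}W$-term is at most $(T-s)|\xi_2|_{C_{\mathbb F}([0,T];L^2(\Omega))}$;
\item the drift term is at most $\sqrt{T-s}\int_s^T|\xi_1(r)|_{L^2(\Omega)}{\rm d}r\le (T-s)|\xi_1|_{L^2_{\mathbb F}(0,T)}$.
\end{itemize}
Dividing by $(T-s)^{3/4}$ and letting $s\to T$ gives a contradiction. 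The last Cauchy--Schwarz step is precisely where square integrability of $\xi_1$ in time is used, and it fails for $L^1$ in time.
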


\ms 

\noindent{\bf Proof of Theorem {\rm\ref{T41}}.} Let us employ the contradiction argument, and divide the proof into three cases.

\ms

 {\bf Case 1)} {\bf $f$ is support in $G_0$}. 

Since $G_0\subseteq G$ is an open subset and $G\setminus \overline{G_0}$, we can find a function $\rho\in C_0^\infty(G\setminus \overline{G_0})$ satisfying $|\rho|_{L^2(G)}=1$. Assume that $(\ref{1a})$ was exactly controllable. Then, for $({\bf E,H})=(0,0)$, one could find controls $(f,g,u)$ with $supp~f\subseteq G_0$, a.e. $(t,\omega)\in(0,T)\times\Omega$ such that the corresponding solution to $(\ref{1a})$ fulfills $({\bf E}(T),{\bf H}(T))=(\rho\xi,0)$, where $\xi$ is given in Lemma \ref{L41}. Thus, 
\begin{eqnarray*}\label{C11}
\begin{array}{ll}
\ds\rho\xi=\int_0^T<{\rm curl}~{\bf H}+f,\rho>_{H^{-1}(G),H^1(G)}\mbox{d}t+\int_0^T<g,\rho>_{L^2(G)}\mbox{d}t.
\end{array}
\end{eqnarray*}

Since $supp f\subseteq G_0$ and $\rho\in C_0^\infty(G\setminus \overline{G_0})$, then 
\begin{eqnarray*}\label{C12}
\begin{array}{ll}
\ds\rho\xi=\int_0^T<{\rm curl}~{\bf H}+g,\rho>_{H^{-1}(G),H^1(G)}\mbox{d}t,
\end{array}
\end{eqnarray*}
which contradicts Lemma \ref{L41}.

\ms

{\bf Case 2)} {\bf $f$ is support in $G_0$}. 

It is similar to the the discussion in the first case, we omit it here.

\ms

{\bf Case 3)} $u=0.$

Assume that the system $(\ref{1a})$ was exactly controllable. Similar to the observability estimate $(\ref{T1011})$, we can deduce that, for any $(y_T, z_T)$, the solution to $(\ref{20a})$ satisfies
\begin{eqnarray}\label{C13}
\begin{array}{ll}
\ds\mathbb{E}\int_G \big[|y(x,T)|^2+|z(x,T)|^2\big]{\rm d}x\leq C\mathbb{E}\int_Q\big(|Y|^2+|Z|^2\big){\rm d}x{\rm d}t,
\end{array}
\end{eqnarray}
For any nonzero data $(y_T,z_T)$, consider the following random Maxwell equation
\begin{eqnarray*}\label{C14}
\left\{
\begin{array}{ll}
\mbox{d}y-\mbox{curl}~z\mbox{d}t
=0 &\mbox{ in }Q,\\\ns\ds
\mbox{d}z+\mbox{curl}~y\mbox{d}t=0 &\mbox{ in }Q,\\\ns\ds
\mbox{div} y=\mbox{div} z=0 &\mbox{ in }Q,\\\ns\ds
y\cdot\nu=0 &\mbox{ on }\Gamma\times(0,T),\\\ns\ds
z\times\nu=0 &\mbox{ on }\Gamma\times(0,T),\\\ns\ds
y(x, T)=y_T(x),\q z(x, T)=z_T(x) &\mbox{ in }G.
\end{array}
\right.
\end{eqnarray*}
Clearly, $(y,Y,z,Z)$=$(y,0,z,0)$ solves the above system with the final datum  $(y_T,z_T)$, a contradiction to the inequality $(\ref{C13})$.
\hfill$\Box$

\ms

From the above negative result, we find that none of the two internal controls $f, g$ and boundary controls can be ignored, and  internal controls must be effective everywhere in the domain $G$.

\section{A Numerical Approach to the Exact Controllability of \eqref{1a}}

In this section, we  discuss the numerical implementation for the 
exact controllability of the stochastic Maxwell equation  $(\ref{1a})$. 
More precisely, consider \({\bf E}=(E_1,E_2,E_3)\), \({\bf H}=(H_1,H_2,H_3)\), and assume the problem is defined on the domain \(G\times (0,T)\), where \(G = (a_1,b_1) \times (a_2,b_2) \times (a_3,b_3)\), \(a_i,b_i\in \mathbb{R}\), \(i=1,2,3\). The unit outer normal vector \( \nu \) and the boundary control \(u\) are as follows:
	\[
	\begin{aligned}
		& \nu |_{x_1=a_1} = (-1,0,0),
		&& \nu |_{x_2=a_2} = (0,-1,0), 
		&& \nu |_{x_3=a_3} = (0,0,-1),\\
		& \nu |_{x_1=b_1} = (1,0,0),
		&& \nu |_{x_2=b_2} = (0,1,0),
		&& \nu |_{x_3=b_3} = (0,0,1),\\
		& u|_{x_1=a_1}= (0,-E_3,E_2) ,&& u|_{x_2=a_2}= (E_3,0,-E_1) ,&& u|_{x_3=a_3}= (-E_2,E_1,0) ,\\
		& u|_{x_1=b_1}=(0,E_3,-E_2) ,&& u|_{x_2=b_2}= (-E_3,0,E_1),&& u|_{x_3=b_3}= (E_2,-E_1,0).
	\end{aligned}
	\]
    
Our goal is  to find  %to seek 
a control \((f, g, u)\) with \(f = (f_1, f_2, f_3)\) and \(g = (g_1, g_2, g_3)\) numerically  such that for any given initial data \((\mathbf E_0(x), \mathbf H_0(x))\) and terminal data \((\mathbf E_T(x), \mathbf H_T(x))\), it holds that 
$(\mathbf E(x,T), \mathbf H(x,T)) = (\mathbf E_T(x), \mathbf H_T(x))$  in $ G $ 
for some \(T > 0\).  
%The above objectives will then be achieved by combining a central difference for spatial discretization,  a midpoint scheme for temporal discretization and Lagrange multiplier method.

% Our goal: For any given initial data \((\mathbf E_0(x),\mathbf H_0(x))\) and terminal data \((\mathbf E_T(x),\mathbf H_T(x))\) for the stochastic Maxwell equation, find the control \((f,g,u)\), with \(f = (f_1 ,f_2 ,f_3) ,g = (g_1,g_2,g_3)\), such that \((\mathbf E(x,T), \mathbf H(x,T)) = (\mathbf E_T(x), \mathbf H_T(x))\) in \(G\) for some time \(T>0\). 
% Then the above objectives will be achieved through a combination of central difference for the space descretization, and midpoint scheme  for the time descretization.

	\subsection{Algorithm Statement}
	% \cite{10.1093/imamat/18.1.9}
Based on the research of \cite{hm}, we employ the central difference on the Yee grid for spatial discretization and the midpoint scheme for temporal discretization. Through iteration, we obtain the relationship between  the terminal data \((\mathbf E_T(x), \mathbf H_T(x))\)  and both the initial data \((\mathbf E_0(x), \mathbf H_0(x))\) and the control variables to be solved. Finally, by means of %using 
the Lagrange multiplier method and the condition that the divergences of \(\bf E\), \(\bf H\) are zero, we present the numerical value of   controls for the problem.
\subsubsection{Numerical  Discretization}
Let the discrete grid of \(G\times (0,T)\) be
	\[\begin{aligned}
		&a_1=x_{1_0}<x_{1_1}<\cdots<x_{1_{N_1}}=b_1,&&a_2=x_{2_0}<x_{2_1}<\cdots<x_{2_{N_2}}=b_2,\\
		&a_3=x_{3_0}<x_{3_1}<\cdots<x_{3_{N_3}}=b_3,&&0=t_0<t_1<\cdots<t_{N_t}=T,
	\end{aligned}\]
	where \(x_{1_i}=a_1+ih_1\), \(h_1=\frac{b_1-a_1}{N_1}\), \(i=0,1,\cdots,N_1\); \(x_{2_j}=a_2+jh_2\), \(h_2=\frac{b_2-a_2}{N_2}\), \(j=0,1,\cdots,N_2\); \(x_{3_k}=a_3+kh_3\), \(h_3=\frac{b_3-a_3}{N_3}\), \(k=0,1,\cdots,N_3\); \(t_n=n\Delta t\), \(\Delta t=\frac{T}{N_t}\), \(n=0,1,\cdots,N_t.\)
Based on the Yee grid, we introduce 
	\[\begin{aligned}
		&E_1^{i+\frac{1}{2},j,k},&&i=0,\cdots,N_1-1,&&j=1,\cdots,N_2-1,&&k=1,\cdots,N_3-1,\\
		&E_2^{i,j+\frac{1}{2},k},&&i=1,\cdots,N_1-1,&&j=0,\cdots,N_2-1,&&k=1,\cdots,N_3-1,\\
		&E_3^{i,j,k+\frac{1}{2}},&&i=1,\cdots,N_1-1,&&j=1,\cdots,N_2-1,&&k=0,\cdots,N_3-1,\\
		&H_1^{i,j+\frac{1}{2},k+\frac{1}{2}},&&i=1,\cdots,N_1-1,&&j=0,\cdots,N_2-1,&&k=0,\cdots,N_3-1,\\
		&H_2^{i+\frac{1}{2},j,k+\frac{1}{2}},&&i=0,\cdots,N_1-1,&&j=1,\cdots,N_2-1,&&k=0,\cdots,N_3-1,\\
		&H_3^{i+\frac{1}{2},j+\frac{1}{2},k},&&i=0,\cdots,N_1-1,&&j=0,\cdots,N_2-1,&&k=1,\cdots,N_3-1.
	\end{aligned}\]
	% where \(E_1^{i+\frac{1}{2},j,k}\) denotes the approximation of \(E_1(x_{1_{i+\frac{1}{2}}},x_{2_j},x_{3_k})\), \(x_{1_{i+\frac{1}{2}}}=a_1+(i+\frac{1}{2})h_1\), and the quantities \(E_2^{i,j+\frac{1}{2},k}\), \(E_3^{i,j,k+\frac{1}{2}}\), \(H_1^{i,j+\frac{1}{2},k+\frac{1}{2}}\), \(H_2^{i+\frac{1}{2},j,k+\frac{1}{2}}\), \(H_3^{i+\frac{1}{2},j+\frac{1}{2},k}\) are analogous. Furthermore, the numerical approximation for \(f\) and \(g\) are defined similarly to those of \(\bf E\) and \(\bf H\), respectively. Define \(\vec{E}_1\in \mathbb{R}^{N_1(N_2-1)(N_3-1)}\) as the vector obtained by ordering the components \(E_1^{i+\frac{1}{2},j,k}\), where we first iterate over \(k\), then over \(j\), and finally over \(i\). The vectors \(\vec{E}_2\), \(\vec{E}_3\), \(\vec{H}_1\), \(\vec{H}_2\), \(\vec{H}_3\), \(\vec{f}_1\), \(\vec{f}_2\), \(\vec{f}_3\), \(\vec{g}_1\), \(\vec{g}_2\), \(\vec{g}_3\) are defined in the same way.
Let \(\vec{E}_1 \in \mathbb{R}^{N_1(N_2-1)(N_3-1)}\) be the vector whose components are the values \(E_1^{i+\frac{1}{2},j,k}\), ordered with \(k\) varying first, then \(j\), then \(i\). The vectors \(\vec{E}_2, \vec{E}_3, \vec{H}_1, \vec{H}_2, \vec{H}_3, \vec{f}_1, \vec{f}_2, \vec{f}_3, \vec{g}_1, \vec{g}_2, \vec{g}_3\) are defined analogously.
	
Considering the boundary condition, we define the numerical approximations of \(\bf E\) on the boundary as follows
\[\begin{aligned}
&E_1^{i+\frac{1}{2},0,k},&&E_1^{i+\frac{1}{2},j,0},&&E_2^{0,j+\frac{1}{2},k},&&E_2^{i,j+\frac{1}{2},0},&&E_3^{0,j,k+\frac{1}{2}},&&E_3^{i,0,k+\frac{1}{2}},\\
&E_1^{i+\frac{1}{2},N_2,k},&&E_1^{i+\frac{1}{2},j,N_3},&&E_2^{N_1,j+\frac{1}{2},k},&&E_2^{i,j+\frac{1}{2},N_3},&&E_3^{N_1,j,k+\frac{1}{2}},&&E_3^{i,N_2,k+\frac{1}{2}},
\end{aligned}\]
%	\[\begin{aligned}
%		&E_1^{i+\frac{1}{2},0,k},&&E_1^{i+\frac{1}{2},N_2,k},&&E_1^{i+\frac{1}{2},j,0},&&E_1^{i+\frac{1}{2},j,N_3},\\
%		&E_2^{0,j+\frac{1}{2},k},&&E_2^{N_1,j+\frac{1}{2},k},&&E_2^{i,j+\frac{1}{2},0},&&E_2^{i,j+\frac{1}{2},N_3},\\
%		&E_3^{0,j,k+\frac{1}{2}},&&E_3^{N_1,j,k+\frac{1}{2}},&&E_3^{i,0,k+\frac{1}{2}},&&E_3^{i,N_2,k+\frac{1}{2}},
%	\end{aligned}\]
where \(E_1^{i+\frac{1}{2},0,k}\) denotes the approximation of \(E_1(x_{1_{i+\frac{1}{2}}},0,x_{3_k})\). The remaining boundary points are similar to \(E_1^{i+\frac{1}{2},0,k}\). 
Let \(\vec{E}_{1_2}\in \mathbb{R}^{2N_1(N_3-1)}\) be the vector obtained by ordering the components \(E_1^{i+\frac{1}{2},0,k}\) and \(E_1^{i+\frac{1}{2},N_2,k}\), where we first iterate over \(E_1^{i+\frac{1}{2},0,k}\), then \(E_1^{i+\frac{1}{2},N_2,k}\), and first iterate over \(k\), then over \(j\), and finally over \(i\). 
The vectors \(\vec{E}_{1_3}\), \(\vec{E}_{2_1}\), \(\vec{E}_{2_3}\), \(\vec{E}_{3_1}\), \(\vec{E}_{3_2}\) are given in the same way.
	
Now we present the spatial discretization based on the central difference scheme, which  reads
\begin{equation}\label{eq:TEz_Maxwell_equations_matrix_form}
\left\{
\begin{aligned}
&\mbox{d}\vec{E}_1=(\tfrac{1}{h_2}A_1\vec{H}_3-\tfrac{1}{h_3}A_2\vec{H}_2)\mbox{d}t+\vec{f}_1\mbox{d}W(t),\\
&\mbox{d}\vec{E}_2=(\tfrac{1}{h_3}A_3\vec{H}_1-\tfrac{1}{h_1}A_4\vec{H}_3)\mbox{d}t+\vec{f}_2\mbox{d}W(t),\\
&\mbox{d}\vec{E}_3=(\tfrac{1}{h_1}A_5\vec{H}_2-\tfrac{1}{h_2}A_6\vec{H}_1)\mbox{d}t+\vec{f}_3\mbox{d}W(t),\\
&\mbox{d}\vec{H}_1=-(\tfrac{1}{h_2}F_1\vec{E}_3+\tfrac{1}{h_2}G_1\vec{E}_{3_2}-\tfrac{1}{h_3}F_2\vec{E}_2-\tfrac{1}{h_3}G_2\vec{E}_{2_3})\mbox{d}t+\vec{g}_1\mbox{d}W(t),\\
&\mbox{d}\vec{H}_2=-(\tfrac{1}{h_3}F_3\vec{E}_1+\tfrac{1}{h_3}G_3\vec{E}_{1_3}-\tfrac{1}{h_1}F_4\vec{E}_3-\tfrac{1}{h_1}G_4\vec{E}_{3_1})\mbox{d}t+\vec{g}_2\mbox{d}W(t),\\
&\mbox{d}\vec{H}_3=-(\tfrac{1}{h_1}F_5\vec{E}_2+\tfrac{1}{h_1}G_5\vec{E}_{2_1}-\tfrac{1}{h_2}F_6\vec{E}_1-\tfrac{1}{h_2}G_6\vec{E}_{1_2})\mbox{d}t+\vec{g}_3\mbox{d}W(t),
\end{aligned}
\right.
\end{equation}
where %the matrices in \eqref{eq:TEz_Maxwell_equations_matrix_form} are as follows. 
\[A_1=\operatorname{diag}\underbrace{(D_1,\cdots,D_1)}_{\text{\(N_1\) blocks}},\quad A_2=\operatorname{diag}\underbrace{(D_2,\cdots,D_2)}_{\text{\(N_1(N_2-1)\) blocks}},\quad A_3=\operatorname{diag}\underbrace{(D_2,\cdots,D_2)}_{\text{\((N_1-1)N_2\) blocks}},\]
	\[D_1=\begin{blockarray}{@{}ccccc@{}}
		& \scriptscriptstyle 1 & \scriptscriptstyle 2 &\cdots  & \scriptscriptstyle N_2 \\
		\begin{block}{c [c c c c]}
			\scriptscriptstyle 1      & -I_{N_3-1} & I_{N_3-1} & & \\
			\vdots& & \ddots & \ddots & \\
			\scriptscriptstyle N_2-1  & & & -I_{N_3-1} & I_{N_3-1} \\
		\end{block}
	\end{blockarray},\quad D_2=\begin{blockarray}{@{}ccccc@{}}
		& \scriptscriptstyle 1 & \scriptscriptstyle 2 &\cdots  & \scriptscriptstyle N_3 \\
		\begin{block}{c [c c c c]}
			\scriptscriptstyle 1      & -1 & 1 & & \\
			\vdots& & \ddots & \ddots & \\
			\scriptscriptstyle N_3-1  & & & -1 & 1 \\
		\end{block}
	\end{blockarray},\]
with \(I\) being the identity matrix, for example \(I_{N_3}\in \mathbb{R}^{N_3\times N_3}\), and \(0_{m\times n}\) being the \(m\times n\) zero matrix. 
In addition, \(A_4\) and \(A_5\) are block matrices as follows
	\[
	\begin{blockarray}{@{}ccccc@{}}
		& \scriptscriptstyle 1 & \scriptscriptstyle 2 & \cdots &\scriptscriptstyle N_1 \\
		\begin{block}{c [c c c c]}
			\scriptscriptstyle 1      & -I_{M} & I_{M} & & \\
			\vdots& & \ddots & \ddots & \\
			\scriptscriptstyle N_1-1  & & & -I_{M} & I_{M} \\
		\end{block}
	\end{blockarray},\]
	where \(M=N_2(N_3-1)\) for the case of \(A_4\), and \(M=(N_2-1)N_3\) for the case of \(A_5\), respectively, and 
	\[A_6=\operatorname{diag}\underbrace{(D_1',\cdots,D_1')}_{\text{\(N_1-1\) blocks}},\quad D_1'=\begin{blockarray}{@{}ccccc@{}}
		& \scriptscriptstyle 1 & \scriptscriptstyle 2 & \cdots & \scriptscriptstyle N_2 \\
		\begin{block}{c [c c c c]}
			\scriptscriptstyle 1      & -I_{N_3} & I_{N_3} & & \\
			\vdots& & \ddots & \ddots & \\
			\scriptscriptstyle N_2-1  & & & -I_{N_3} & I_{N_3} \\
		\end{block}
	\end{blockarray}.\] 
Moreover, the matrices \(F_1,\cdots,F_6\) satisfy
\[F_1=-A_6^{\top},\quad F_2=-A_3^{\top},\quad F_3=-A_2^{\top},\quad F_4 =-A_5^{\top},\quad F_5 =-A_4^{\top},\quad F_6=-A_1^{\top}.\]
The matrices \(G_1,\cdots,G_6\) are
	\[G_1=\begin{bmatrix}
		G_{11}&G_{12}
	\end{bmatrix},\quad G_2=\begin{bmatrix}
		G_{21}&G_{22}
	\end{bmatrix},\quad G_3=\begin{bmatrix}
		G_{31}&G_{32}
	\end{bmatrix},\]
	\[G_4=\begin{bmatrix}
		G_{41}&G_{42}
	\end{bmatrix},\quad G_5=\begin{bmatrix}
		G_{51}&G_{52}
	\end{bmatrix}, \quad G_6=\begin{bmatrix}
		G_{61}&G_{62}
	\end{bmatrix},\]
with
	\[\begin{aligned}
		&G_{11}=\operatorname{diag}\underbrace{(D_3,\cdots,D_3)}_{\text{\(N_1-1\) blocks}},&&G_{21}=\operatorname{diag}\underbrace{(D_5,\cdots,D_5)}_{\text{\((N_1-1)N_2\) blocks}},&&G_{31}=\operatorname{diag}\underbrace{(D_5,\cdots,D_5)}_{\text{\(N_1(N_2-1)\) blocks}},\\
		&G_{12}=\operatorname{diag}\underbrace{(D_4,\cdots,D_4)}_{\text{\(N_1-1\) blocks}},&&G_{22}=\operatorname{diag}\underbrace{(D_6,\cdots,D_6)}_{\text{\((N_1-1)N_2\) blocks}},&&G_{32}=\operatorname{diag}\underbrace{(D_6,\cdots,D_6)}_{\text{\(N_1(N_2-1)\) blocks}},\\
	\end{aligned}\]
	\[G_{41}=\begin{bmatrix}
		-I_{(N_2-1)N_3}\\0_{(N_1-1)(N_2-1)N_3\times (N_2-1)N_3}
	\end{bmatrix}, \;G_{51}=\begin{bmatrix}
		-I_{N_2(N_3-1)}\\0_{(N_1-1)N_2(N_3-1)\times N_2(N_3-1)}
	\end{bmatrix},\;G_{61}=\operatorname{diag}\underbrace{(D_3',\cdots,D_3')}_{\text{\(N_1\) blocks}},\]
	\[G_{42}=\begin{bmatrix}
		0_{(N_1-1)(N_2-1)N_3\times (N_2-1)N_3}\\I_{(N_2-1)N_3}
	\end{bmatrix},\;
	G_{52}=\begin{bmatrix}
		0_{(N_1-1)N_2(N_3-1)\times N_2(N_3-1)}\\I_{N_2(N_3-1)}
	\end{bmatrix}, \;G_{62}=\operatorname{diag}\underbrace{(D_4',\cdots,D_4')}_{\text{\(N_1\) blocks}},\]
	\[\begin{aligned}
		&D_3=\begin{bmatrix}
			-I_{N_3}\\0_{(N_2-1)N_3\times N_3}
		\end{bmatrix},&& D_5=\begin{bmatrix}
			-1\\0_{(N_3-1)\times 1}
		\end{bmatrix},&&D_3'=\begin{bmatrix}
		-I_{N_3-1}\\0_{(N_2-1)(N_3-1)\times (N_3-1)}
		\end{bmatrix},\\\ns
		&D_4=\begin{bmatrix}
		0_{(N_2-1)N_3\times N_3}\\I_{N_3}
		\end{bmatrix},&&D_6=\begin{bmatrix}
			0_{(N_3-1)\times 1}\\1
		\end{bmatrix}, &&D_4'=\begin{bmatrix}
		0_{(N_2-1)(N_3-1)\times (N_3-1)}\\I_{N_3-1}
		\end{bmatrix}.
	\end{aligned}\]
	
For simplicity, we combine the components and define	
\[\vec{E}_h(t)=\begin{bmatrix}
		(\vec{E}_1(t))^{\top}&(\vec{E}_2(t))^{\top}&(\vec{E}_3(t))^{\top}
	\end{bmatrix}^{\top},\]
	\[\vec{E}_*(t)=\begin{bmatrix}
		(\vec{E}_{1_2}(t))^{\top}&(\vec{E}_{1_3}(t))^{\top}&(\vec{E}_{2_1}(t))^{\top}&(\vec{E}_{2_3}(t))^{\top}&(\vec{E}_{3_1}(t))^{\top}&(\vec{E}_{3_2}(t))^{\top}
	\end{bmatrix}^{\top},\]
	\[A=\begin{bmatrix}
		0&-\tfrac{1}{h_3}A_2&\tfrac{1}{h_2}A_1\\\tfrac{1}{h_3}A_3&0&-\tfrac{1}{h_1}A_4\\-\tfrac{1}{h_2}A_6&\tfrac{1}{h_1}A_5&0
	\end{bmatrix},\quad F=\begin{bmatrix}
		0&\tfrac{1}{h_3}F_2&-\tfrac{1}{h_2}F_1\\-\tfrac{1}{h_3}F_3&0&\tfrac{1}{h_1}F_4\\\tfrac{1}{h_2}F_6&-\tfrac{1}{h_1}F_5&0
	\end{bmatrix},\]
	\[G=\begin{bmatrix}
		0&0&0&\tfrac{1}{h_3}G_2&0&-\tfrac{1}{h_2}G_1\\0&-\tfrac{1}{h_3}G_3&0&0&\tfrac{1}{h_1}G_4&0\\\tfrac{1}{h_2}G_6&0&-\tfrac{1}{h_1}G_5&0&0&0
	\end{bmatrix}.\]
The vectors \(\vec{H}_h(t)\), \(\vec{f}_h(t)\), \(\vec{g}_h(t)\) are defined analogously.	We denote by \(\vec{E}_{h,n}\) the approximation of \(\vec{E}_h(t_n)\); similarly, \(\vec{H}_{h,n}\), \(\vec{f}_{h,n}\), \(\vec{g}_{h,n}\), \(\vec{E}_{*,n}\), \(\vec{H}_{*,n}\)  approximate the corresponding quantities at \(t_n\). Let \(\Delta W_n = W(t_{n+1}) - W(t_n)\) be the increment of the Wiener process.	
% We use \(\vec{E}_{h,n}\) to approximate \(\vec{E}_h(t_n)\). \(\vec{H}_{h,n}\), \(\vec{f}_{h,n}\), \(\vec{g}_{h,n}\), \(\vec{E}_{*,n}\) are similar to \(\vec{E}_{h,n}\). And denote \(\Delta W_n=W(t_{n+1})-W(t_n)\).  
Then, applying the midpoint scheme to discretize \eqref{eq:TEz_Maxwell_equations_matrix_form}, we obtain the following full discretization
\begin{equation}\label{fully_discrete}
		\left\{\begin{aligned}
			\vec{E}_{h,n+1}-\tfrac{\Delta t}{2}A\vec{H}_{h,n+1}&=\vec{E}_{h,n}+\tfrac{\Delta t}{2}A\vec{H}_{h,n}+\vec{f}_{h,n}\Delta W_n,\\
			-\tfrac{\Delta t}{2}F\vec{E}_{h,n+1}+\vec{H}_{h,n+1}&=\tfrac{\Delta t}{2}F\vec{E}_{h,n}+\vec{H}_{h,n}+\vec{g}_{h,n}\Delta W_n+\tfrac{\Delta t}{2}G\vec{E}_{*,n+1}+\tfrac{\Delta t}{2}G\vec{E}_{*,n}.
		\end{aligned}\right.
	\end{equation}

Note that the divergence-free property of \(\bf E\) and \(\bf H\) should be hold. To this end, we introduce the discrete divergence operator
\[\begin{aligned}
(\nabla_h\cdot \vec{E}_h)^{i,j,k}=\tfrac{1}{h_1}(E_1^{i+\frac{1}{2},j,k}-E_1^{i-\frac{1}{2},j,k})+\tfrac{1}{h_2}(E_2^{i,j+\frac{1}{2},k}-E_2^{i,j-\frac{1}{2},k})+\tfrac{1}{h_3}(E_3^{i,j,k+\frac{1}{2}}-E_3^{i,j,k-\frac{1}{2}}),
\end{aligned}\]
\[\begin{aligned}
(\nabla_h\cdot{\vec{H}_h})^{i+\frac{1}{2},j+\frac{1}{2},k+\frac{1}{2}}&=\tfrac{1}{h_1}(H_1^{i+1,j+\frac{1}{2},k+\frac{1}{2}}-H_1^{i,j+\frac{1}{2},k+\frac{1}{2}})
+\tfrac{1}{h_2}(H_2^{i+\frac{1}{2},j+1,k+\frac{1}{2}}-H_2^{i+\frac{1}{2},j,k+\frac{1}{2}})\\
&+\tfrac{1}{h_3}(H_3^{i+\frac{1}{2},j+\frac{1}{2},k+1}-H_3^{i+\frac{1}{2},j+\frac{1}{2},k}).
\end{aligned}\]
Due to the discrete  version $\nabla_h\cdot\vec{E}_{h,n}=0$ and $\nabla_h\cdot\vec{H}_{h,n}=0,$
%yields that the divergence-free requirement for \(\mathbf{E}\) and \(\mathbf{H}\) leads to satisfy the divergence-free condition for \(\mathbf{E}\) and \(\mathbf{H}\),
we require that \(\vec{f}_{h,n}\) and \(\vec{g}_{h,n}\) satisfy the following equality  for \(n = 0, \dots, N_t-1\):
% To satisfy the divergence-free condition for \(\bf E\) and \(\bf H\), we require \(f\) and \(g\) to satisfy the following equality for \(t_n\), \(n=0,\cdots, N_t-1\). 
\begin{equation}\label{div_f_g}
\nabla_h\cdot \vec{f}_{h,n}=0,\quad \nabla_h \cdot \vec{g}_{h,n}=0.
\end{equation}
Define the numerical solutions of \(g\) on the boundary by
\begin{align*}
&g_{1,n}^{0,j+\frac{1}{2},k+\frac{1}{2}},\quad g_{1,n}^{N_1,j+\frac{1}{2},k+\frac{1}{2}},\quad
g_{2,n}^{i+\frac{1}{2},0,k+\frac{1}{2}},\\
&g_{2,n}^{i+\frac{1}{2},N_2,k+\frac{1}{2}},\quad 
g_{3,n}^{i+\frac{1}{2},j+\frac{1}{2},0},\quad g_{3,n}^{i+\frac{1}{2},j+\frac{1}{2},N_3},
\end{align*}
where \(g_{1,n}^{0,j+\frac{1}{2},k+\frac{1}{2}}\) denotes the approximation of \(g_1(t_n)\) at the node $(0,x_{2_{j+\frac{1}{2}}},x_{3_{k+\frac{1}{2}}})$. 
The remaining boundary points are similar to \(g_{1,n}^{0,j+\frac{1}{2},k+\frac{1}{2}}\). Define \(\vec{g}_{1_1,n}\in \mathbb{R}^{2N_2N_3}\) as the vector obtained by ordering the components \(g_{1,n}^{0,j+\frac{1}{2},k+\frac{1}{2}}\) and \(g_{1,n}^{N_1,j+\frac{1}{2},k+\frac{1}{2}}\), where we iterate over \(g_{1,n}^{0,j+\frac{1}{2},k+\frac{1}{2}}\) first, then \(g_{1,n}^{N_1,j+\frac{1}{2},k+\frac{1}{2}}\), and iterate first over \(k\), then over \(j\), and finally over \(i\). The vectors \(\vec{g}_{2_2,n}\) and \(\vec{g}_{3_3,n}\) are defined in the same way.
Then, \eqref{div_f_g} can be rewritten as
\begin{equation}\label{matrix_div}
\left\{\begin{aligned}
&\tfrac{1}{h_1}V_1\vec{f}_{1,n}+\tfrac{1}{h_2}V_2\vec{f}_{2,n}+\tfrac{1}{h_3}V_3\vec{f}_{3,n}=0,\\
&\tfrac{1}{h_1}P_1\vec{g}_{1,n}+\tfrac{1}{h_1}Q_1\vec{g}_{1_1,n}+\tfrac{1}{h_2}P_2\vec{g}_{2,n}+\tfrac{1}{h_2}Q_2\vec{g}_{2_2,n}+\tfrac{1}{h_3}P_3\vec{g}_{3,n}+\tfrac{1}{h_3}Q_3\vec{g}_{3_3,n}=0,
\end{aligned}
\right.
\end{equation}
where \(V_2=\operatorname{diag}\underbrace{(D_1,\cdots,D_1)}_{\text{\((N_1-1)\) blocks}},\quad V_3=\operatorname{diag}\underbrace{(D_2,\cdots,D_2)}_{\text{\((N_1-1)(N_2-1)\) blocks}},\) and \[V_1=
	\begin{blockarray}{@{}ccccc@{}}
		& \scriptscriptstyle 1 & \scriptscriptstyle 2 & \cdots &\scriptscriptstyle N_1 \\
		\begin{block}{c [c c c c]}
			\scriptscriptstyle 1      & -I_{M} & I_{M} & & \\
			\vdots& & \ddots & \ddots & \\
			\scriptscriptstyle N_1-1  & & & -I_{M} & I_{M} \\
		\end{block}
	\end{blockarray},\quad M=(N_2-1)(N_3-1).\] 
In addition, the matrices \(P_1,P_2,P_3\) are
	\[P_1 =
	\begin{blockarray}{@{}cccc@{}}
		& \scriptscriptstyle 1 & \cdots  &\scriptscriptstyle N_1-1 \\
		\begin{block}{c [c c c]}
			\scriptscriptstyle 1      & I_{N_2N_3} &  & \\
			\scriptscriptstyle 2 &-I_{N_2N_3}&\ddots&\\
			\vdots& & \ddots & I_{N_2N_3} \\
			\scriptscriptstyle N_1  & & & -I_{N_2N_3} \\
		\end{block}
	\end{blockarray},\,\quad
    \begin{aligned}
& P_2=\operatorname{diag}\underbrace{(-(D_1')^{\top},\cdots,-(D_1')^{\top})}_{\text{\(N_1\) blocks}},\\\ns
& P_3=\operatorname{diag}\underbrace{(-D_2^{\top},\cdots,-D_2^{\top})}_{\text{\(N_1N_2\) blocks}}.
    \end{aligned}
    % P_2=\operatorname{diag}\underbrace{(-(D_1')^{\top},\cdots,-(D_1')^{\top})}_{\text{\(N_1\) blocks}},\,P_3=\operatorname{diag}\underbrace{(-D_2^{\top},\cdots,-D_2^{\top})}_{\text{\(N_1N_2\) blocks}}.
    \]
Furthermore, the matrices \(Q_1,Q_2,Q_3\) are
	\[Q_1=\begin{bmatrix}
		Q_{11}&Q_{12}
	\end{bmatrix},\quad Q_2=\begin{bmatrix}
		Q_{21}&Q_{22}
	\end{bmatrix},\quad Q_3=\begin{bmatrix}
		Q_{31}&Q_{32}
	\end{bmatrix},\]
	where 
	\begin{align*}
&Q_{11}=\begin{bmatrix}
			-I_{N_2N_3}\\0_{(N_1-1)N_2N_3\times N_2N_3}
		\end{bmatrix},\quad \q~
Q_{12}=\begin{bmatrix}
			0_{(N_1-1)N_2N_3\times N_2N_3}\\I_{N_2N_3}
		\end{bmatrix},\\\ns
&Q_{21}=\operatorname{diag}\underbrace{(D_3,\cdots,D_3)}_{\text{\(N_1\) blocks}},\quad \q\q Q_{22}=\operatorname{diag}\underbrace{(D_4,\cdots,D_4)}_{\text{\(N_1\) blocks}},\\
&Q_{31}=\operatorname{diag}\underbrace{(D_5,\cdots,D_5)}_{\text{\(N_1N_2\) blocks}},\quad \q\q Q_{32}=\operatorname{diag}\underbrace{(D_6,\cdots,D_6)}_{\text{\(N_1N_2\) blocks}}.
\end{align*}
	% \[\begin{aligned}
	% 	&Q_{11}=\begin{bmatrix}
	% 		-I_{N_2N_3}\\0_{(N_1-1)N_2N_3\times N_2N_3}
	% 	\end{bmatrix},&&Q_{21}=\operatorname{diag}\underbrace{(D_3,\cdots,D_3)}_{\text{\(N_1\) blocks}},&&Q_{31}=\operatorname{diag}\underbrace{(D_5,\cdots,D_5)}_{\text{\(N_1N_2\) blocks}},\\
	% 	&Q_{12}=\begin{bmatrix}
	% 		0_{(N_1-1)N_2N_3\times N_2N_3}\\I_{N_2N_3}
	% 	\end{bmatrix},&&Q_{22}=\operatorname{diag}\underbrace{(D_4,\cdots,D_4)}_{\text{\(N_1\) blocks}},&&Q_{32}=\operatorname{diag}\underbrace{(D_6,\cdots,D_6)}_{\text{\(N_1N_2\) blocks}}.
	% \end{aligned}\]

Next, we assemble the components across all time instants. Define
 \[\vec{g}_{*,n}=\begin{bmatrix}
  (\vec{g}_{1_1,n})^{\top}&(\vec{g}_{2_2,n})^{\top}&(\vec{g}_{3_3,n})^{\top}
 \end{bmatrix}^{\top},\quad \vec{f}_{h,s}=\begin{bmatrix}
 (\vec{f}_{h,0})^{\top}&\cdots&(\vec{f}_{h,N_t-1})^{\top}
 \end{bmatrix}^{\top},\]
 \[V_0=\begin{bmatrix}
  V_1&V_2&V_3
 \end{bmatrix},\quad P_0=\begin{bmatrix}
  P_1&P_2&P_3
 \end{bmatrix},\quad Q_0=\begin{bmatrix}
  Q_1&Q_2&Q_3
 \end{bmatrix},\]
 and let \(\vec{g}_{h,s}\), \(\vec{g}_{*,s}\) be defined analogously to 
 \(\vec{f}_{h,s}\).
 Then, we have the following discrete divergence conditions
\begin{equation}\label{div_condition}
  \left\{\begin{aligned}
   &V\vec{f}_{h,s}=0,\\
   &P\vec{g}_{h,s}+Q\vec{g}_{*,s}=0,
  \end{aligned}\right.
 \end{equation}
 where \(V=\operatorname{diag}\underbrace{(V_0,\cdots,V_0)}_{\text{\(N_t\) blocks}}\), \(P=\operatorname{diag}\underbrace{(P_0,\cdots,P_0)}_{\text{\(N_t\) blocks}}\), \(Q=\operatorname{diag}\underbrace{(Q_0,\cdots,Q_0)}_{\text{\(N_t\) blocks}}\). 
We now incorporate all boundary points into \eqref{fully_discrete}. Let \(\vec{R}_{*,n}\) approximate \(\vec{R}_*(t_n)\), where \(\vec{R}_*(t_n)\) is defined as the column vector of boundary values of \(\mathbf{E}\) and \(\mathbf{H}\) excluding those already contained in \(\vec{E}_*(t_n)\). The number of components of \(\vec{R}_{*,n}\) is denoted by \(N_R\), and  the sizes and notations of the entries in the various vectors are
\[N_E=N_1(N_2-1)(N_3-1)+(N_1-1)N_2(N_3-1)+(N_1-1)(N_2-1)N_3,\]
\[N_H=(N_1-1)N_2N_3+N_1(N_2-1)N_3+N_1N_2(N_3-1),\]
\[N_*=2N_1(N_2+N_3-2)+2N_2(N_1+N_3-2)+2N_3(N_1+N_2-2).\]
Collecting all variables that include every grid point and letting
\[\Phi_n=\begin{bmatrix}
\vec{E}_{h,n}\\\vec{H}_{h,n}\\\vec{E}_{*,n}\\\vec{R}_{*,n}
	\end{bmatrix},\quad M_1=\begin{bmatrix}
		I_{N_E}&-\tfrac{\Delta t}{2}A&0&0\\-\tfrac{\Delta t}{2}F&I_{N_H}&0&0\\0&0&I_{N_*}&0\\0&0&0&I_{N_R}
	\end{bmatrix},\quad M_2=\begin{bmatrix}
		I_{N_E}&\tfrac{\Delta t}{2}A&0&0\\\tfrac{\Delta t}{2}F&I_{N_H}&0&0\\0&0&0&0\\0&0&0&0
	\end{bmatrix},\]
	\[B_f=\begin{bmatrix}
		I_{N_E}&0&0&0
	\end{bmatrix}^{\top},\quad B_g=\begin{bmatrix}
		0&I_{N_H}&0&0
	\end{bmatrix}^{\top},\quad B_1=\begin{bmatrix}
		0& \tfrac{\Delta t}{2}G^{\top} &I_{N_*}&0
	\end{bmatrix}^{\top},\]
	\[B_2=\begin{bmatrix}
		0& \tfrac{\Delta t}{2}G^{\top} &0&0
	\end{bmatrix}^{\top},\quad B_3=\begin{bmatrix}
		0&0&0&I_{N_R}
	\end{bmatrix}^{\top},\]
we arrive at
\[
M_1\Phi_{n+1}=M_2\Phi_n + \bigl(B_f\vec{f}_{h,n}+B_g\vec{g}_{h,n}\bigr)\Delta W_n + B_1\vec{E}_{*,n+1}+B_2\vec{E}_{*,n}+B_3\vec{R}_{*,n+1}.
\]
It can be found that \(M_1\) is invertible.  Set \(K = M_1^{-1}\) and \(U = K M_2\). Then for \(n=0,\dots,N_t-1\),
\begin{equation}
\Phi_{n+1}=U\Phi_n + K\Bigl(\bigl(B_f\vec{f}_{h,n}+B_g\vec{g}_{h,n}\bigr)\Delta W_n + B_1\vec{E}_{*,n+1}+B_2\vec{E}_{*,n}+B_3\vec{R}_{*,n+1}\Bigr).
\end{equation}
Iterating this recurrence gives
\begin{equation}\label{k_is_m_1}
\begin{aligned}
\Phi_{N_t}=U^{N_t}\Phi_{0}
&+U^{N_t-1}K\Bigl(\bigl(B_f\vec{f}_{h,0}+B_g\vec{g}_{h,0}\bigr)\Delta W_{0}+B_1\vec{E}_{*,1}+B_2\vec{E}_{*,0}+B_3\vec{R}_{*,1}\Bigr)\\
&+U^{N_t-2}K\Bigl(\bigl(B_f\vec{f}_{h,1}+B_g\vec{g}_{h,1}\bigr)\Delta W_{1}+B_1\vec{E}_{*,2}+B_2\vec{E}_{*,1}+B_3\vec{R}_{*,2}\Bigr)\\
&+\cdots\\
&+K\Bigl(\bigl(B_f\vec{f}_{h,N_t-1}+B_g\vec{g}_{h,N_t-1}\bigr)\Delta W_{N_t-1}+B_1\vec{E}_{*,N_t}+B_2\vec{E}_{*,N_t-1}+B_3\vec{R}_{*,N_t}\Bigr).
\end{aligned}
\end{equation}
In \eqref{k_is_m_1}, the right‑hand side involves known quantities (the initial state \(\Phi_0\) and \(\vec{E}_{*,0}\)) as well as unknown states at later times, while the left‑hand side \(\Phi_{N_t}\) is unknown.  Now define
\[\vec{E}_{*,s}=\begin{bmatrix}
(\vec{E}_{*,1})^{\top}&\cdots&(\vec{E}_{*,N_t})^{\top}
\end{bmatrix}^{\top}, \q
\vec{R}_{*,s}=\begin{bmatrix}
(\vec{R}_{*,1})^{\top}&\cdots&(\vec{R}_{*,N_t})^{\top}
\end{bmatrix}^{\top},\]
\[\xi = \Phi_{N_t} - U^{N_t}\Phi_0 - U^{N_t-1}K B_2\vec{E}_{*,0}.\] 
Then we obtain
\begin{equation}\label{xi}
S_f\vec{f}_{h,s}+S_g\vec{g}_{h,s}+S_1\vec{E}_{*,s}+S_r\vec{R}_{*,s}=\xi,
\end{equation}
where each matrix \(S_f, S_g, S_1, S_r\) contains \(N_t\) blocks and are given by
\begin{align*}
S_f &=
\begin{bmatrix}
U^{N_t-1}KB_f\Delta W_0 & U^{N_t-2}KB_f\Delta W_1 & \cdots & UKB_f\Delta W_{N_t-2} & KB_f\Delta W_{N_t-1}
\end{bmatrix},\\
S_g &=
\begin{bmatrix}
U^{N_t-1}KB_g\Delta W_0 & U^{N_t-2}KB_g\Delta W_1 & \cdots & UKB_g\Delta W_{N_t-2} & KB_g\Delta W_{N_t-1}
\end{bmatrix},\\
S_1 &=
\begin{bmatrix}
U^{N_t-2}S_0 & U^{N_t-3}S_0 & \cdots & U S_0 & S_0 & KB_1
\end{bmatrix},\\
S_r &=
\begin{bmatrix}
U^{N_t-1}KB_3 & U^{N_t-2}KB_3 & \cdots & UKB_3 & KB_3
\end{bmatrix},
\end{align*}
with the matrix \(S_0\) appearing in \(S_1\) defined as
$S_0 = K(UB_1 + B_2).$

   \vspace{1em}
\begin{algorithm}[H]
\caption{A fully discrete scheme for the stochastic Maxwell equation}
\textbf{Input:}
\begin{itemize}
\item Space Partition: \(x_i\in [a_i,b_i]\), and the number of grid cells, \(N_i\), \(i=1,2,3\).
			\item Time Partition: \(t\in(0,T)\), and the number of time steps, \(N_t\), \(\Delta t=\frac{T}{N_t}\).
			\item Initial Data: \(({\bf E}_0(x)\), \({\bf H}_0(x))\).
			\item Terminal Target: \(({\bf E}_T(x)\), \({\bf H}_T(x))\).
		\end{itemize}
		
		\textbf{Output:}
		\begin{itemize}
			\item 	Equation \eqref{div_condition}: \(\left\{\begin{aligned}
				&V\vec{f}_{h,s}=0,\\
				&P\vec{g}_{h,s}+Q\vec{g}_{*,s}=0.
			\end{aligned}\right.\)
			\item Equation \eqref{xi}: \(S_f\vec{f}_{h,s}+S_g\vec{g}_{h,s}+S_1\vec{E}_{*,s}+S_r\vec{R}_{*,s}=\xi\).
		\end{itemize}

\begin{enumerate}[leftmargin=0.5pt]
\item[1.] Discretize in space via central difference and in time via midpoint method to obtain the fully discrete scheme \eqref{fully_discrete}.
\begin{equation}
\left\{\begin{aligned}
\vec{E}_{h,n+1}-\tfrac{\Delta t}{2}A\vec{H}_{h,n+1}&=\vec{E}_{h,n}+\tfrac{\Delta t}{2}A\vec{H}_{h,n}+\vec{f}_{h,n}\Delta W_n,\\
-\tfrac{\Delta t}{2}F\vec{E}_{h,n+1}+\vec{H}_{h,n+1}&=\tfrac{\Delta t}{2}F\vec{E}_{h,n}+\vec{H}_{h,n}+\vec{g}_{h,n}\Delta W_n+\tfrac{\Delta t}{2}G\vec{E}_{*,n+1}+\tfrac{\Delta t}{2}G\vec{E}_{*,n}.
\end{aligned}\right.\notag
\end{equation}
\item[2.] %Considering 
Consider the divergence-free condition. We require \eqref{div_condition} 
\[\left\{\begin{aligned}
&V\vec{f}_{h,s}=0,\\
&P\vec{g}_{h,s}+Q\vec{g}_{*,s}=0
\end{aligned}\right.\]
to hold.

\item[3.] Incorporate all boundary points into the fully discrete scheme \eqref{fully_discrete}. Then based on the initial data and terminal target, we require \eqref{xi} 
\begin{equation}
S_f\vec{f}_{h,s}+S_g\vec{g}_{h,s}+S_1\vec{E}_{*,s}+S_r\vec{R}_{*,s}=\xi\notag
\end{equation}
to hold.
\end{enumerate}
\end{algorithm}

\subsubsection{Numerical simulation for The Unknown Control
}
In this subsection, we present  an optimization problem
	\begin{equation}
		\begin{split}
			J(\vec{f}_{h,s},\vec{g}_{h,s},\vec{g}_{*,s},\vec{E}_{*,s},\vec{R}_{*,s})
			\!=\!\tfrac{1}{2}((\vec{f}_{h,s})^{\top} \vec{f}_{h,s}\!+\!(\vec{g}_{h,s})^{\top} \vec{g}_{h,s}\!+\!(\vec{g}_{*,s})^{\top} \vec{g}_{*,s}+(\vec{E}_{*,s})^{\top} \vec{E}_{*,s}\!+\!(\vec{R}_{*,s})^{\top} \vec{R}_{*,s}),\notag
		\end{split}
	\end{equation}
	to find \(\vec{f}_{h,s},\vec{g}_{h,s},\vec{g}_{*,s},\vec{E}_{*,s},\vec{R}_{*,s}\) that minimizes \(J\).
	In detail, 
	we apply the Lagrange multiplier method by introducing 
    %make use of the Lagrange multiplier method, introducing 
    three Lagrange multipliers \(\lambda_1\), \(\lambda_2\), \(\lambda_3\), and defining %define 
    the following Lagrangian function
	\begin{equation}\label{Lagrange_function}
		\begin{split}
			L(&\vec{f}_{h,s},\vec{g}_{h,s},\vec{g}_{*,s},\vec{E}_{*,s},\vec{R}_{*,s},\lambda_1,\lambda_2,\lambda_3)
			=J(\vec{f}_{h,s},\vec{g}_{h,s},\vec{g}_{*,s},\vec{E}_{*,s},\vec{R}_{*,s})\\
			&+\lambda_1(0-V\vec{f}_{h,s})+\lambda_2(0-P\vec{g}_{h,s}-Q\vec{g}_{*,s})+\lambda_3(\xi-S_f\vec{f}_{h,s}-S_g\vec{g}_{h,s}-S_1\vec{E}_{*,s}-S_r\vec{R}_{*,s}).
		\end{split}
	\end{equation}
By taking the partial derivatives of \(L\) with respect to \(\vec{f}_{h,s},\vec{g}_{h,s},\vec{g}_{*,s},\vec{E}_{*,s},\vec{R}_{*,s}\) and \(\lambda_1,\lambda_2,\lambda_3\), respectively, and setting  them equal to zero, we get %obtain
\begin{equation}\label{optinal_solutions}
\begin{aligned}
&\vec{f}_{h,s}=V^{\top}\lambda_1+S_f^{\top}\lambda_3,\quad \vec{g}_{h,s}=P^{\top}\lambda_2+S_g^{\top}\lambda_3,\\
&\vec{g}_{*,s}=Q^{\top}\lambda_2,\quad \vec{E}_{*,s}=S_1^{\top}\lambda_3,\quad \vec{R}_{*,s}=S_r^{\top}\lambda_3,
\end{aligned}
\end{equation}
\begin{equation}
VV^{\top}\lambda_1+VS_f^{\top}\lambda_3=0,\notag
\end{equation}
\begin{equation}
(PP^{\top}+QQ^{\top})\lambda_2+PS_g^{\top}\lambda_3=0,\notag
\end{equation}
\begin{equation}\label{unknown_lambda_3_}
		S_fV^{\top}\lambda_1+S_gP^{\top}\lambda_2+(S_fS_f^{\top}+S_gS_g^{\top}+S_1S_1^{\top}+S_rS_r^{\top})\lambda_3=\xi.
	\end{equation}
	It can be proved that \(VV^{\top}\) and \(PP^{\top}+QQ^{\top}\) are invertible. As a result,
	\begin{equation}\label{lambda_1}
		\lambda_1=-(VV^{\top})^{-1}VS_f^{\top}\lambda_3,
	\end{equation}
	\begin{equation}\label{lambda_2}
		\lambda_2=-(PP^{\top}+QQ^{\top})^{-1}PS_g^{\top}\lambda_3.
	\end{equation}
	Substituting \(\lambda_1\) and \(\lambda_2\) into \eqref{unknown_lambda_3_}, we obtain
	\begin{equation}\label{lambda_3}
    \begin{aligned}
\lambda_3=&(S_fS_f^{\top}+S_gS_g^{\top}+S_1S_1^{\top}+S_rS_r^{\top}
-S_fV^{\top}(VV^{\top})^{-1}VS_f^{\top}\\
&-S_gP^{\top}(PP^{\top}+QQ^{\top})^{-1}PS_g^{\top})^{-1}\xi.   
\end{aligned}
\end{equation}
Combining \(\lambda_1\), \(\lambda_2\) and \(\lambda_3\) with \eqref{optinal_solutions}, we arrive at the values of \(\vec{f}_{h,s},\vec{g}_{h,s},\vec{g}_{*,s},\vec{E}_{*,s},\vec{R}_{*,s}\).

    \vspace{1em}
	\begin{algorithm}[H]
\caption{Numerical algorithm for simulating the unknown control}
		\textbf{Input:}
		\begin{itemize}
			\item 	Equation \eqref{div_condition}: \(\left\{\begin{aligned}
				&V\vec{f}_{h,s}=0,\\
				&P\vec{g}_{h,s}+Q\vec{g}_{*,s}=0.
			\end{aligned}\right.\)
			\item Equation \eqref{xi}: \(S_f\vec{f}_{h,s}+S_g\vec{g}_{h,s}+S_1\vec{E}_{*,s}+S_r\vec{R}_{*,s}=\xi\).
		\end{itemize}
%		\begin{itemize}
%			\item Equations \eqref{div_condition} and \eqref{xi}.
%		\end{itemize}
		
		\textbf{Output:}
		\begin{itemize}
			\item Optimal Control \((f,g,u)\).
		\end{itemize}
\begin{enumerate}
[leftmargin=0.5pt]
    \item[1.] Define the Lagrange function \eqref{Lagrange_function} with three multipliers \(\lambda_1\), \(\lambda_2\), \(\lambda_3\):
		\begin{equation}
			\begin{split}
				L&=\tfrac{1}{2}((\vec{f}_{h,s})^{\top} \vec{f}_{h,s}+(\vec{g}_{h,s})^{\top} \vec{g}_{h,s}+(\vec{g}_{*,s})^{\top} \vec{g}_{*,s}+(\vec{E}_{*,s})^{\top} \vec{E}_{*,s}+(\vec{R}_{*,s})^{\top} \vec{R}_{*,s})\\
				&+\lambda_1(0-V\vec{f}_{h,s})+\lambda_2(0-P\vec{g}_{h,s}-Q\vec{g}_{*,s})+\lambda_3(\xi-S_f\vec{f}_{h,s}-S_g\vec{g}_{h,s}-S_1\vec{E}_{*,s}-S_r\vec{R}_{*,s}).\notag
			\end{split}
		\end{equation}

    \item[2.] Take the partial derivatives of \(L\) with respect to every variables and set them equal to zero. Then we derive
		\[\begin{aligned}
			\vec{f}_{h,s}=V^{\top}\lambda_1+S_f^{\top}\lambda_3,&&\vec{g}_{h,s}=P^{\top}\lambda_2+S_g^{\top}\lambda_3,&&\vec{E}_{*,s}=S_1^{\top}\lambda_3,
		\end{aligned}\]
		where \(\lambda_1\), \(\lambda_2\), \(\lambda_3\) can be obtained from \eqref{lambda_1}, \eqref{lambda_2}, \eqref{lambda_3}, respectively.
\item[3.] Compute the control \((f,g,u)\).
\begin{itemize}
\item the value of \(f\) can be obtained from \( \vec{f}_{h,s}\).
\item the value of \(g\) can be obtained from \( \vec{g}_{h,s}\).
\item the value of \(u\) can be obtained from \( \vec{E}_{*,s}\).
\begin{itemize}
				%				\item \(E_*=\begin{bmatrix}
					%					(\vec{E}_{1_2})^{\top}&(\vec{E}_{1_3})^{\top}&(\vec{E}_{2_1})^{\top}&(\vec{E}_{2_3})^{\top}&(E_{3_1})^{\top}&(\vec{E}_{3_2})^{\top}
					%				\end{bmatrix}^{\top}\)
\item \(u|_{x_1=a_1}= (0,-E_3,E_2)\), \(u|_{x_1=b_1}=(0,E_3,-E_2)\) 
can be obtained from \(\vec{E}_{3_1}\) and \(\vec{E}_{2_1}\),
\item \(u|_{x_2=a_2}= (E_3,0,-E_1)\), \(u|_{x_2=b_2}= (-E_3,0,E_1)\) 
can be obtained from \(\vec{E}_{3_2}\) and \(\vec{E}_{1_2}\),
\item \(u|_{x_3=a_3}= (-E_2,E_1,0)\), \(u|_{x_3=b_3}= (E_2,-E_1,0)\) 
can be obtained from \(\vec{E}_{2_3}\) and \(\vec{E}_{1_3}\).
\end{itemize}
\end{itemize}
\end{enumerate} 
\end{algorithm}

\subsection{Numerical Experiments.}
\captionsetup[subfigure]{font=tiny,labelsep=none,skip=2pt}
	
	\begin{figure}[htbp]
		\centering
		% 第一列 E1
		\begin{minipage}[t]{0.28\textwidth}
			\centering
			\subcaptionbox{}{\includegraphics[width=\linewidth]{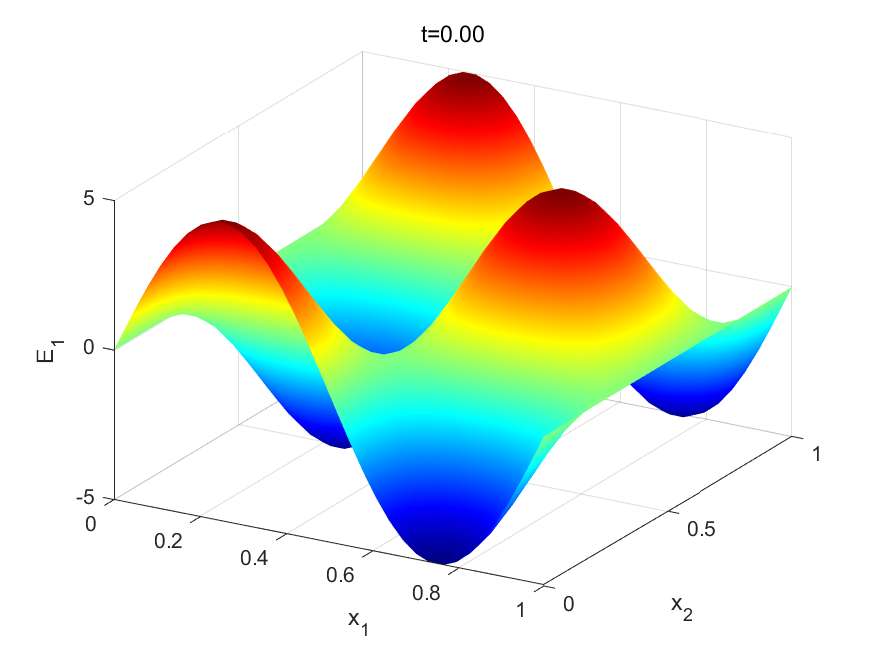}}\\
			\subcaptionbox{}{\includegraphics[width=\linewidth]{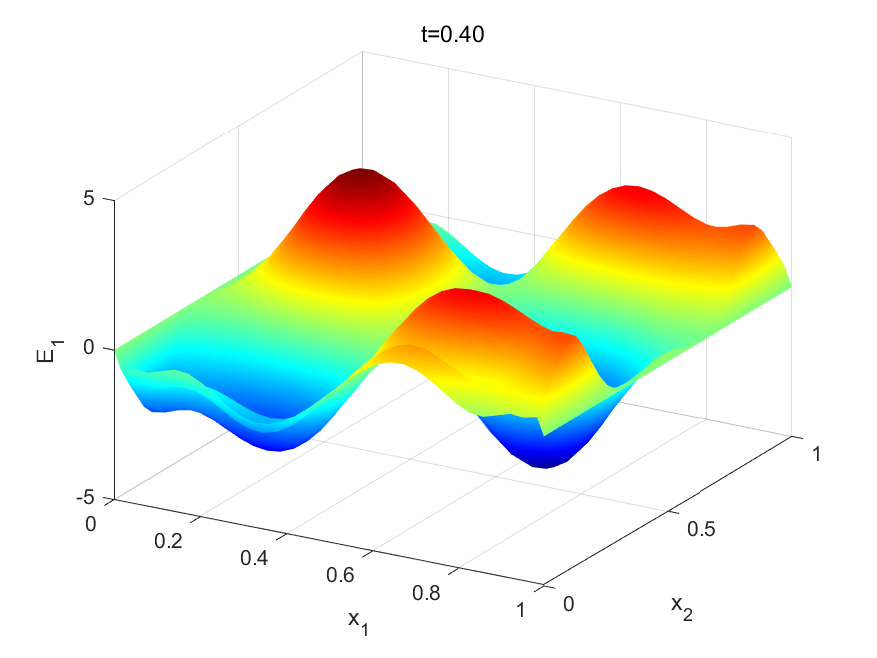}}\\
			\subcaptionbox{}{\includegraphics[width=\linewidth]{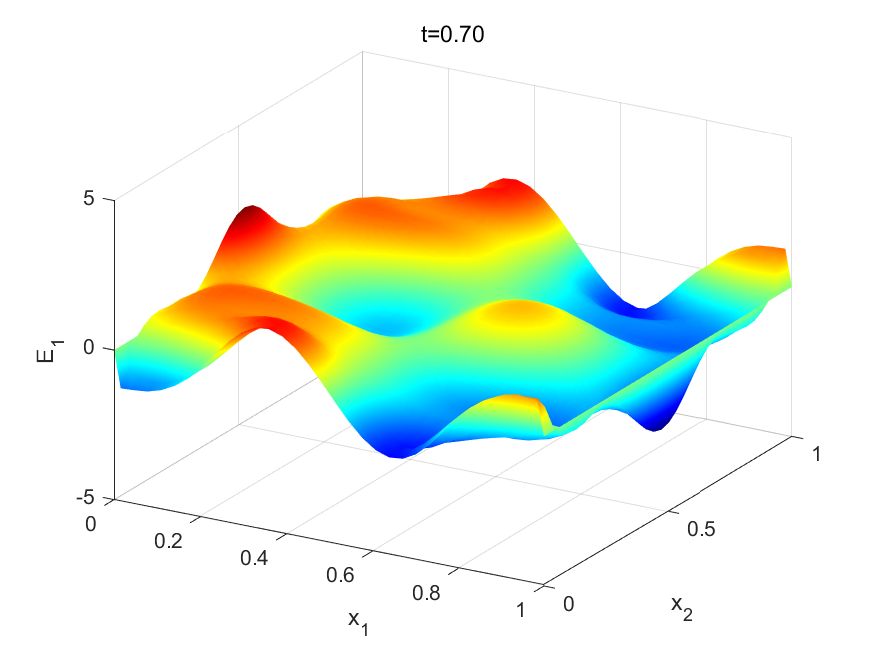}}\\
			\subcaptionbox{}{\includegraphics[width=\linewidth]{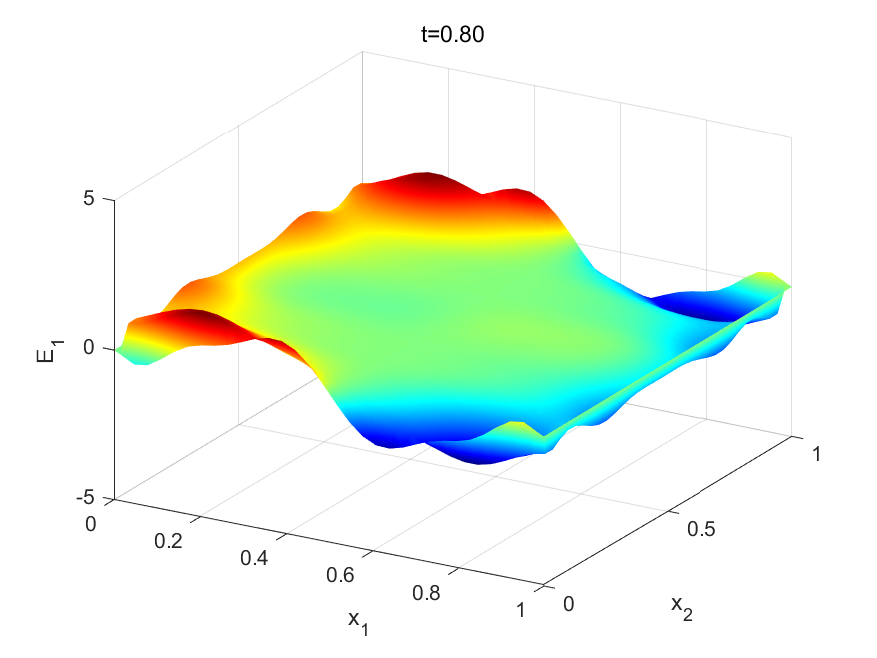}}\\
			\subcaptionbox{}{\includegraphics[width=\linewidth]{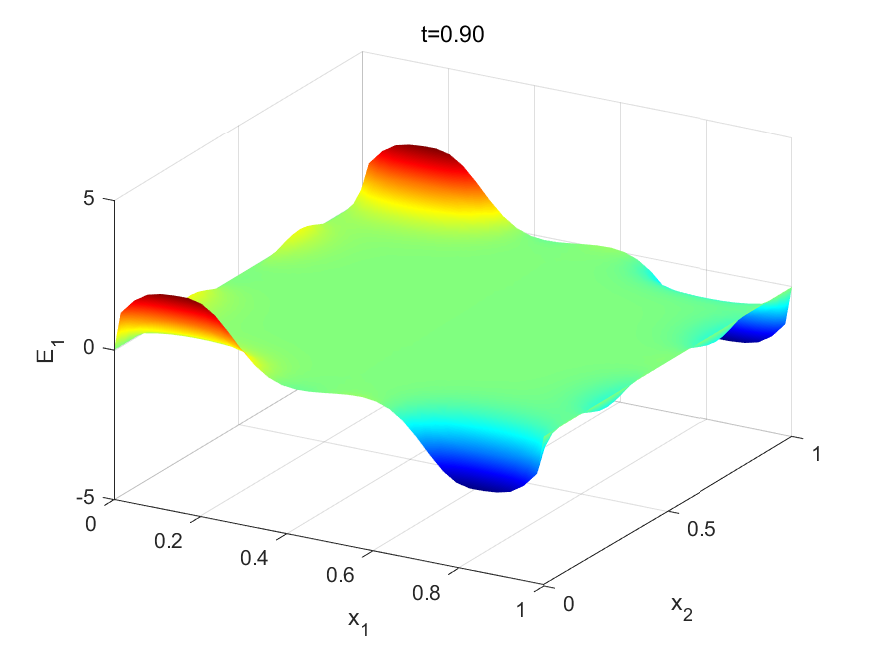}}\\
			\subcaptionbox{}{\includegraphics[width=\linewidth]{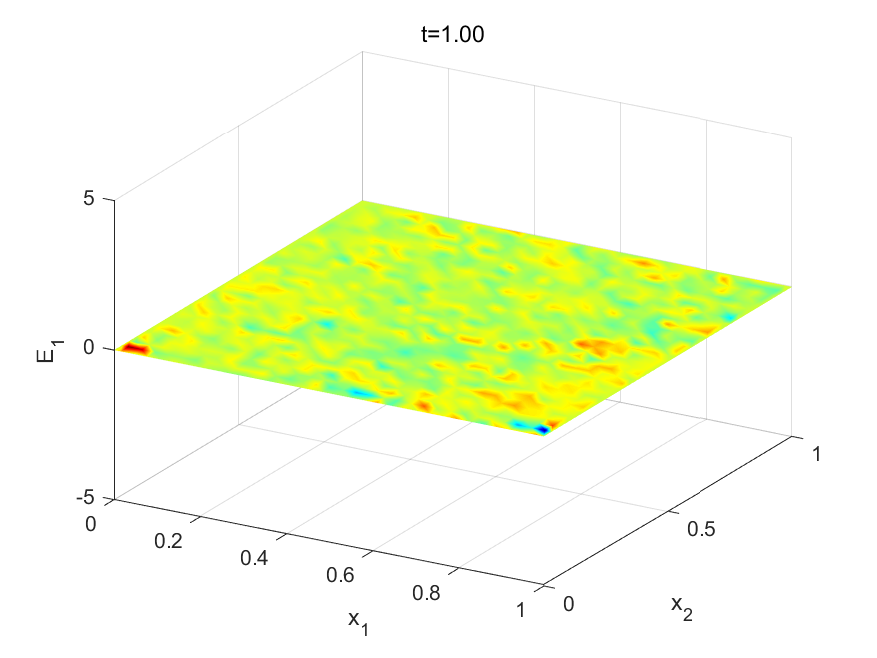}}
		\end{minipage}
		\hfill
		% 第二列 E2
		\begin{minipage}[t]{0.28\textwidth}
			\centering
			\subcaptionbox{}{\includegraphics[width=\linewidth]{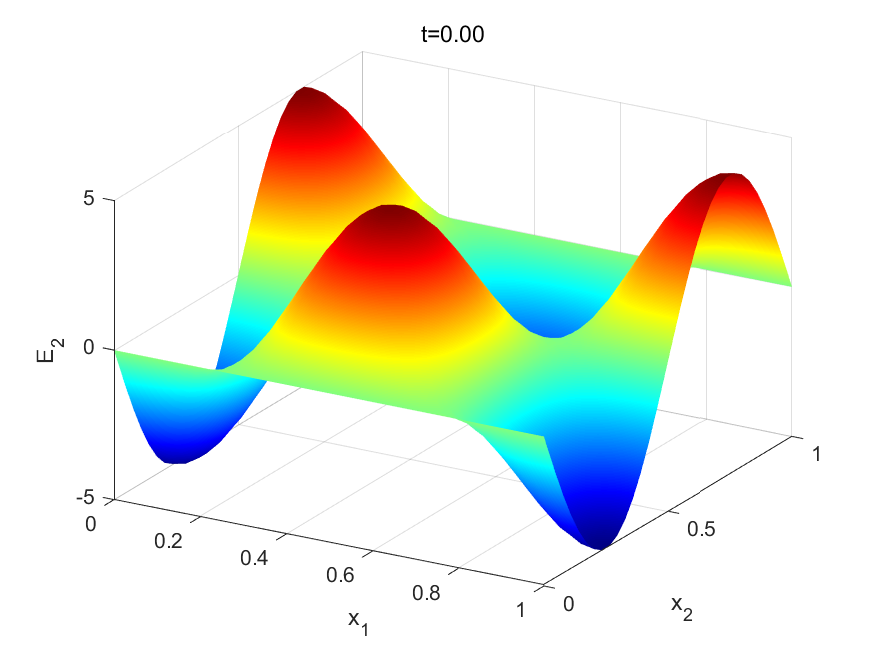}}\\
			\subcaptionbox{}{\includegraphics[width=\linewidth]{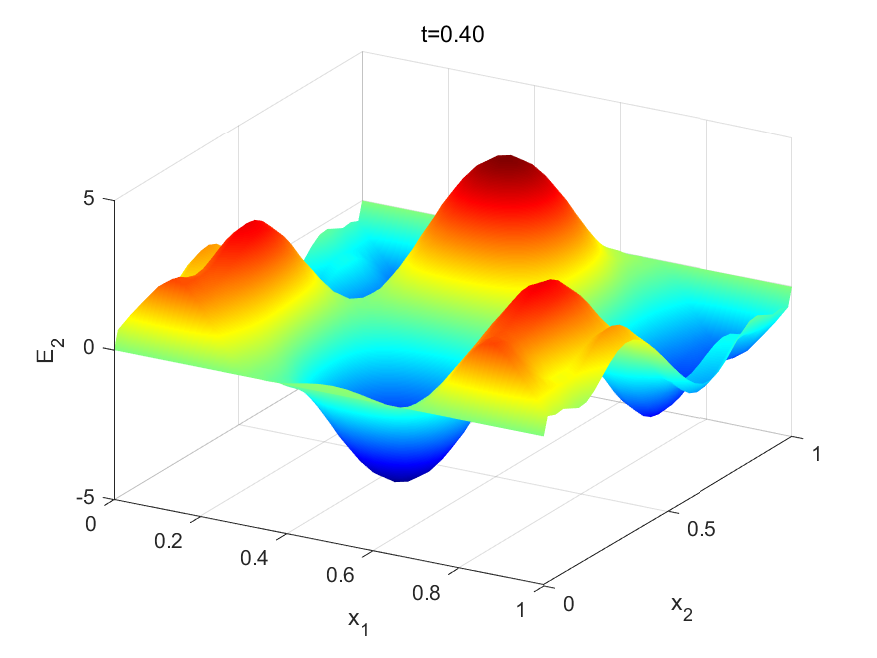}}\\
			\subcaptionbox{}{\includegraphics[width=\linewidth]{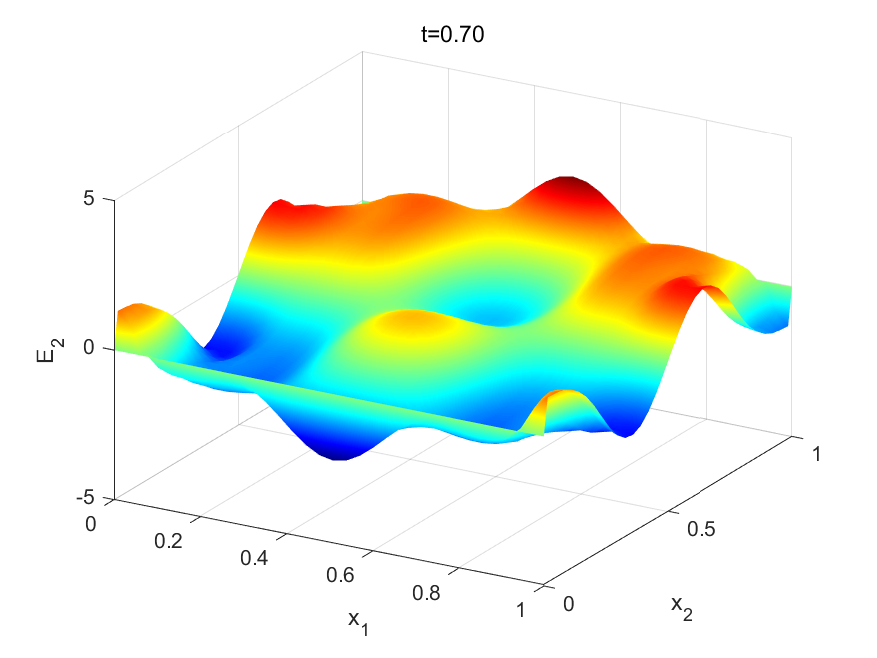}}\\
			\subcaptionbox{}{\includegraphics[width=\linewidth]{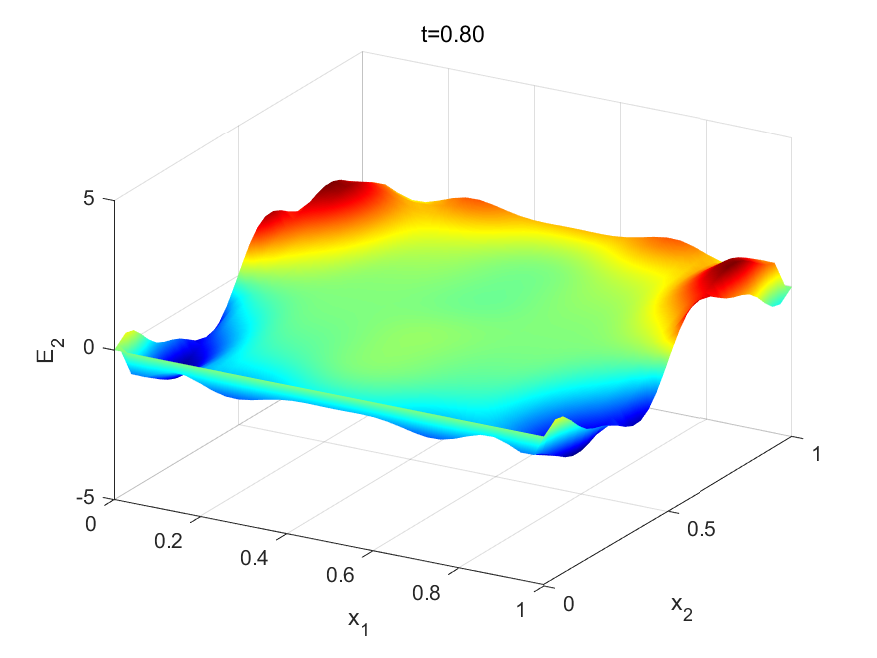}}\\
			\subcaptionbox{}{\includegraphics[width=\linewidth]{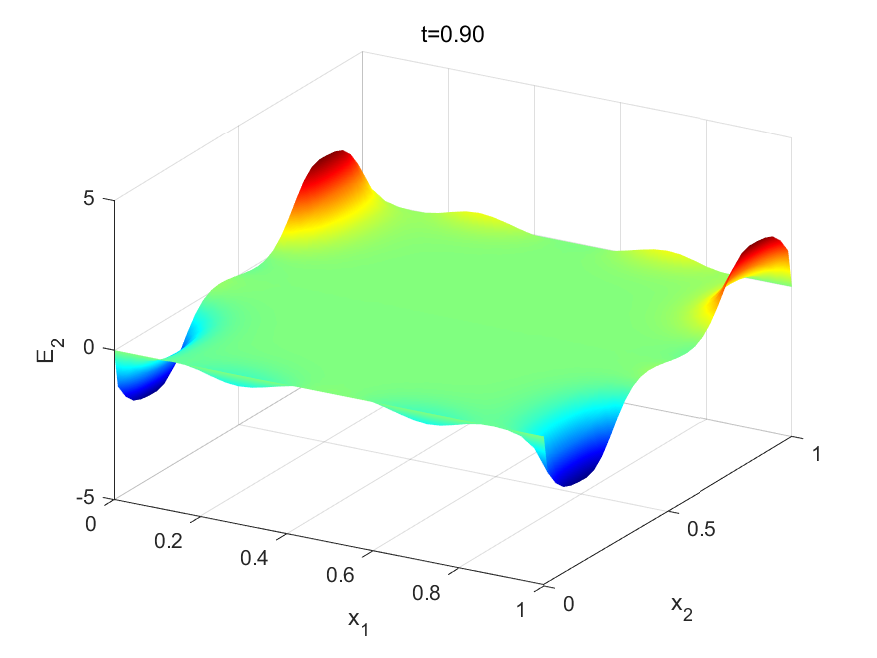}}\\
			\subcaptionbox{}{\includegraphics[width=\linewidth]{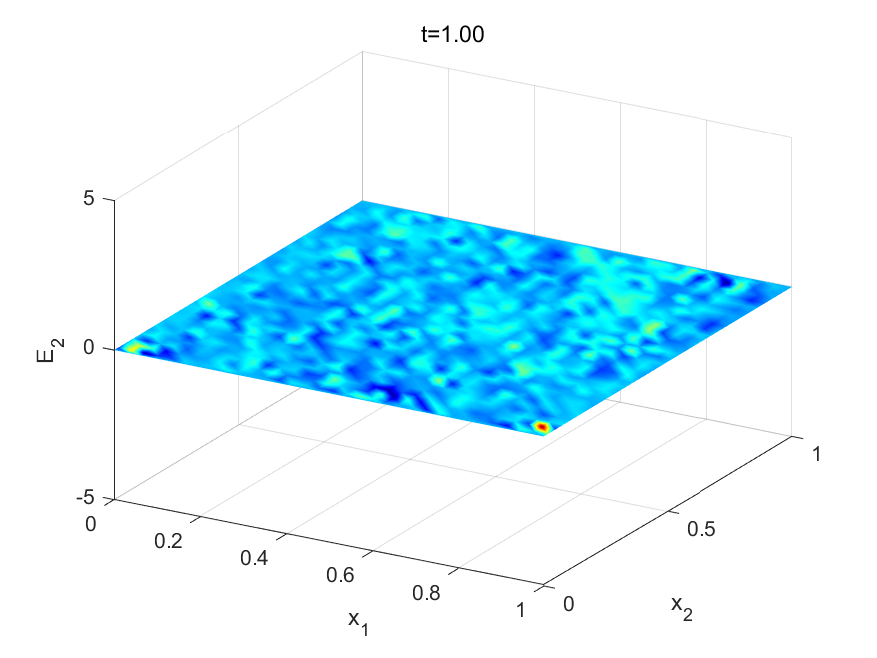}}
		\end{minipage}
		\hfill
		% 第三列 H3
		\begin{minipage}[t]{0.28\textwidth}
			\centering
			\subcaptionbox{}{\includegraphics[width=\linewidth]{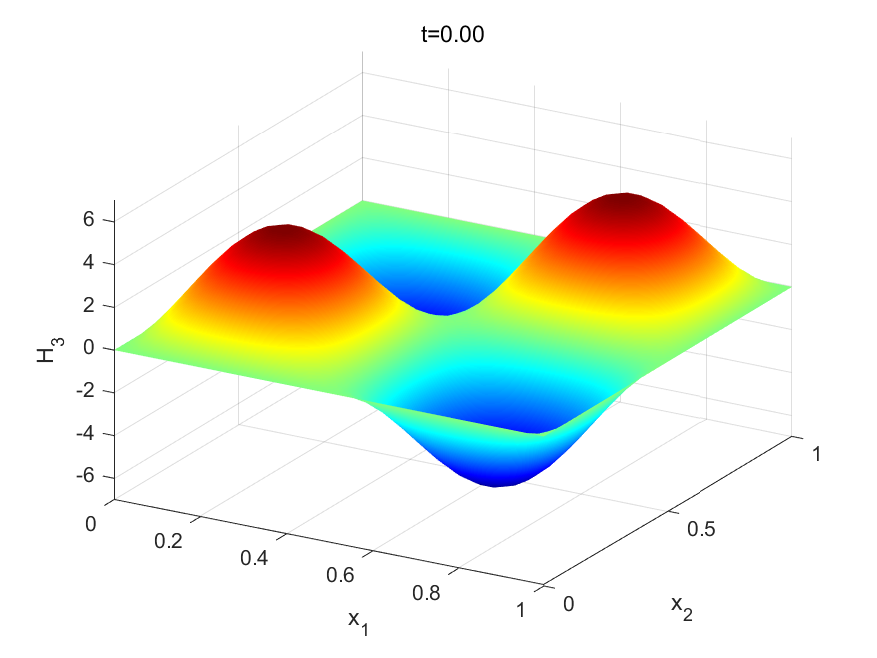}}\\
			\subcaptionbox{}{\includegraphics[width=\linewidth]{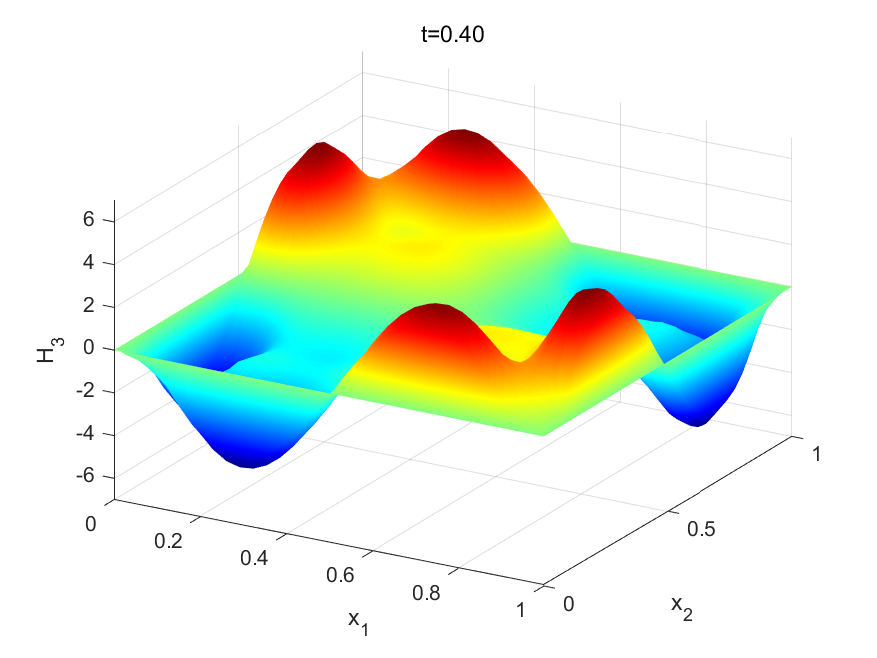}}\\
			\subcaptionbox{}{\includegraphics[width=\linewidth]{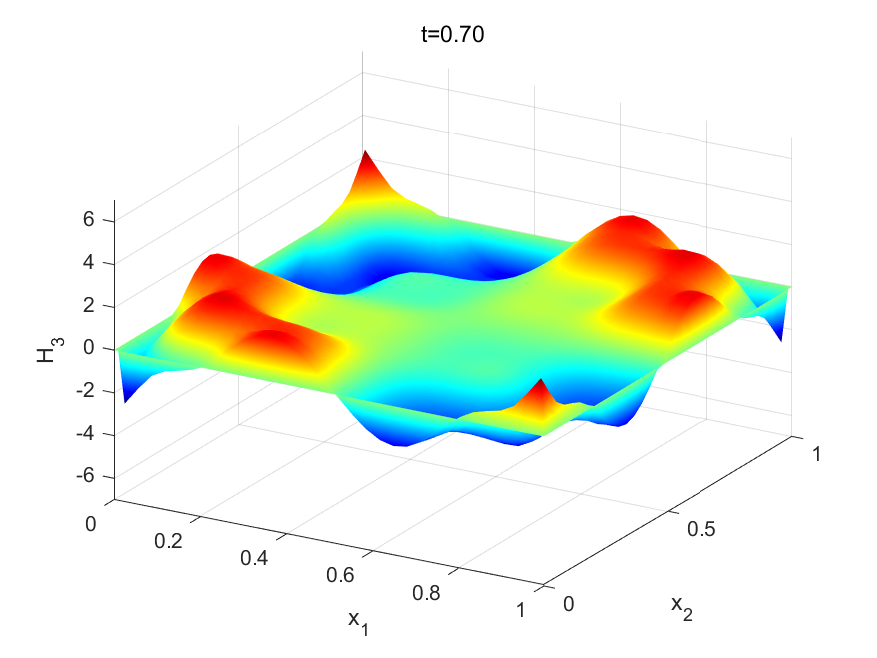}}\\
			\subcaptionbox{}{\includegraphics[width=\linewidth]{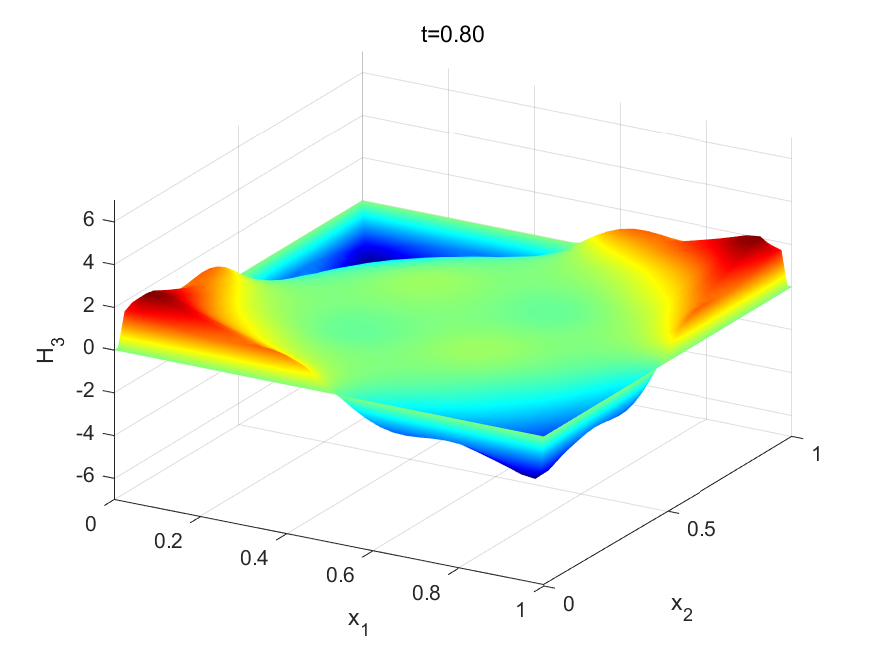}}\\
			\subcaptionbox{}{\includegraphics[width=\linewidth]{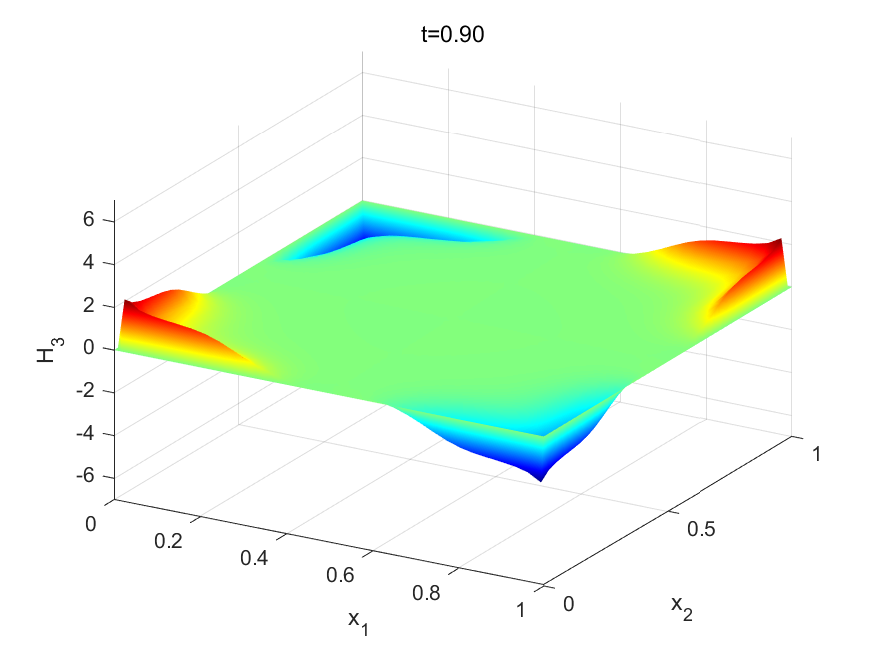}}\\
			\subcaptionbox{}{\includegraphics[width=\linewidth]{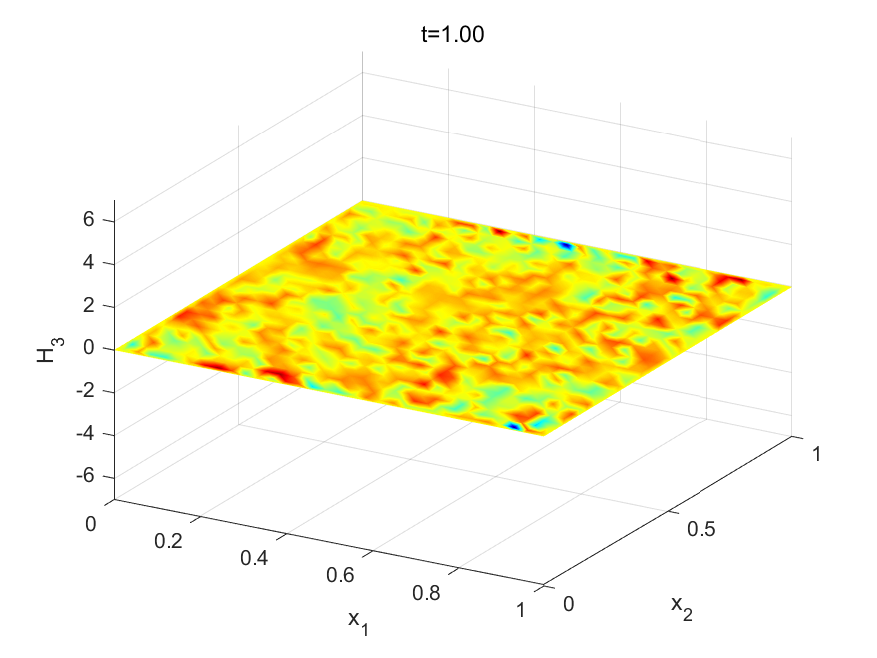}}
		\end{minipage}
		
		\caption{Figures (A)-(F), (G)-(L), and (M)-(R) show the evolution of \(E_{1}\), \(E_{2}\), and \(H_{3}\) at times \(t=0,\,0.4,\,0.7,\,0.8,\,0.9,\,1.0\), respectively.}
		\label{fig:combined}
	\end{figure}

%The three-dimensional stochastic Maxwell equations can be reduced to the two-dimensional \(TE_z\) mode or the \(TM_z\) mode. 
In the numerical experiments, we consider the two-dimensional \(TE_z\) mode, in which the electric and magnetic fields are given by \( {\bf{E}}=(E_1,E_2,0) \) and \({\bf{H}}=(0,0,H_3)\) with each component depending on the spatial variables \(x_1\), \(x_2\) and the time variable %time 
\(t\). The equation of \(TE_z\) mode reads
\begin{equation}\label{TE_z_equation}
\left\{
\begin{aligned}
& \mbox{d}E_1= \tfrac{\partial H_3}{\partial x_2}\mbox{d}t + f_1 \, \mbox{d}W(t) && \text{in } Q, \\
& \mbox{d}E_2 =-\tfrac{\partial H_3}{\partial x_1}\mbox{d}t + f_2 \, \mbox{d}W(t) && \text{in } Q, \\
& \mbox{d}H_3 = ( \tfrac{\partial E_1}{\partial x_2} - \tfrac{\partial E_2}{\partial x_1})\mbox{d}t + g_3 \, \mbox{d}W(t) && \text{in } Q, \\
& \tfrac{\partial E_1}{\partial x_1}+\tfrac{\partial E_2}{\partial x_2}=0 && \text{in } Q,\\
& \mathbf{E} \times \nu = u && \text{on } \Gamma \times (0,T), \\
& \mathbf{E}(x,0) = \mathbf{E}_0(x), \quad \mathbf{H}(x,0) = \mathbf{H}_0(x) && \text{in } G,
\end{aligned}
\right.
\end{equation}
where the domain \( G = (0,1)\times (0,1) \) and the terminal time \( T = 1 \), with spatial step sizes \( h_1 = h_2 = \tfrac{1}{2^5} \) and time step size \( \Delta t = \tfrac{1}{10}\). 
We take the following initial data 
	\begin{equation}
		\begin{aligned}
			&E_{1}(0)=5\sin(2\pi x_1)\cos(2\pi x_2),\\&E_{2}(0)=-5\cos(2\pi x_1)\sin(2\pi x_2),\\&H_{3}(0)=5\sin(2\pi x_1)\sin(2\pi x_2),\notag
		\end{aligned}
	\end{equation}
and take \(({\bf E}_T(x),{\bf H}_T(x))=(0,0)\) as the terminal target. By implementing the numerical algorithm described above, we obtain controls \( (f, g, u) \) such that \(\bf E\) and \(\bf H\) satisfy the terminal target at time \( T = 1 \). 
Substituting the numerical values of the control into the full discretization yields the results presented in Figure \ref{fig:combined}. 
It can be found from Figure~\ref{fig:combined} that the spatio-temporal evolution of the components \(E_1\), \(E_2\), and \(H_3\) at six different time instants. 
The evolution reveals that the solution gradually converges to the terminal target  starting around \(t = 0.7\) under the given control. Notably, the interior nodes of the domain reach the terminal target first. 
Then the state propagates from the interior toward the boundary nodes until the entire system eventually approaches the terminal target.
	
%Figure \ref{fig:combined} presents spatiotemporal evolution profiles of the components \(E_{1},\,E_{2},\,H_{3}\) at six different time instants. It can be observed from the evolution that the numerical solution begins to gradually converge to the terminal target at \(t=0.7\). We can see that the interior nodes of the domain first reach the terminal target, and then the state transitions from the interior to the boundary nodes, until the whole system eventually approaches the terminal target. 

\begin{table}[htbp]
\centering
\caption{\(\mathrm {MSE}\) of numerical approximations for different variables under different types of control.}
\label{tab:convergece}
\begin{tabular}{c c c c}
\toprule
Type of control & \(\mathrm {MSE}_{E_1}\) & \(\mathrm {MSE}_{E_2}\)& \(\mathrm {MSE}_{H_3}\)\\
\midrule
\((\vec{f}_1,\vec{f}_2,\vec{g}_3,\vec{E}_{1_2},\vec{E}_{2_1})\) & \(7.70\times 10^{-23}\) & \(7.08\times 10^{-23}\) & \(2.46\times 10^{-23}\) \\
\midrule
\((\vec{f}_2,\vec{g}_3,\vec{E}_{1_2},\vec{E}_{2_1})\) & \(4.75\times 10^1\) & \(6.10\times 10^1\) & \(6.32\times 10^1\) \\
\((\vec{f}_1,\vec{g}_3,\vec{E}_{1_2},\vec{E}_{2_1})\) & \(6.10\times 10^1\) & \(4.75\times 10^1\) & \(6.32\times 10^1\) \\
\((\vec{g}_3,\vec{E}_{1_2},\vec{E}_{2_1})\)& \(1.03\times 10^2\) & \(1.03\times 10^2\) & \(1.84\times 10^2\) \\
\midrule
				\((\vec{f}_1,\vec{f}_2,\vec{E}_{1_2},\vec{E}_{2_1})\) & \(3.31\times 10^1\) & \(3.31\times 10^1\) & \(1.22\times 10^2\) \\
				\midrule
				\((\vec{f}_1,\vec{f}_2,\vec{g}_3,\vec{E}_{2_1})\)&\(6.75\times 10^2\)&\(2.43\times 10^2\)&\(3.21\times 10^2\)\\
				\((\vec{f}_1,\vec{f}_2,\vec{g}_3,\vec{E}_{1_2})\)&\(2.43\times 10^2\)&\(6.75\times 10^2\)&\(3.21\times 10^2\)\\
				\((\vec{f}_1,\vec{f}_2,\vec{g}_3)\)&\(8.05\times 10^2\)&\(8.05\times 10^2\)&\(1.12\times 10^3\)\\
				\bottomrule
			\end{tabular}
		\end{table}

        To evaluate how well the numerical solution approximates the terminal target, we adopt the mean squared error (\(\mathrm{MSE}\)) as a metric.  The \(\mathrm{MSE}\) for \(E_1\) is defined as
\[
\mathrm{MSE}_{E_1} = \sum_{i=0}^{N_1-1}\sum_{j=0}^{N_2} \bigl( E_{1,N_t}^{i+\frac{1}{2},j} - E_1^{i+\frac{1}{2},j}(T) \bigr)^2,
\]
where \(E_{1,N_t}^{i+\frac{1}{2},j}\) denotes the approximation of \(E_1^{i+\frac{1}{2},j}(T)\).  The quantities \(\mathrm{MSE}_{E_2}\) and \(\mathrm{MSE}_{H_3}\) are defined analogously. 
Table~\ref{tab:convergece} presents the MSE of numerical approximations obtained under different type of control. 
It can be verified that the stochastic Maxwell system is controllable when all three controls \((f, g, u)\) are present, but becomes uncontrollable if any one of them is missing.  
These are coincide with the theoretical results.

\end{document}